\newtheorem{lemma}{Lemma}
\newtheorem*{theorem*}{Theorem}
\newtheorem{theorem}{Theorem}
\newtheorem{definition}[theorem]{Definition}
\newtheorem{corollary}{Corollary}
\newtheorem{remark}{Remark}
\newtheorem{example}{Example}
\newtheorem{Assumption}{\textbf{A}\!\!}
\newenvironment{manualtheoremA}[1]{%
  \manualtheoreminner
}{\endmanualtheoreminner}
\newtheorem{manualtheoremcore}{Theorem}
\newenvironment{manualtheoremT}[1]{%
  \begin{manualtheoremcore}
}{\end{manualtheoremcore}}
\newcommand*{\htwai}[1]{\textbf{\textcolor{red}{To: #1}}}
\newcommand{\titlename}{Clipped SGD Algorithms for Performative Prediction: Tight Bounds for Clipping Bias and Remedies}
\title{\titlename}
\author{Qiang Li, Michal Yemini\thanks{M.~Yemini is with the faculty of Engineering, Bar-Ilan University, Israel. Email: \texttt{michal.yemini@biu.ac.il}}, Hoi-To Wai \thanks{Q.~Li and H.-T.~Wai are with the Department of Systems Engineering and Engineering Management, The Chinese University of Hong Kong, Hong Kong SAR of China. Emails: \texttt{\{liqiang, htwai\}@se.cuhk.edu.hk}
}}
\begin{document}
\maketitle

\begin{abstract}
This paper studies the convergence of clipped stochastic gradient descent (SGD) algorithms with decision-dependent data distribution. Our setting is motivated by privacy preserving optimization algorithms that interact with performative data where the prediction models can influence future outcomes. This challenging setting involves the non-smooth clipping operator and non-gradient dynamics due to distribution shifts. We make two contributions in pursuit for a performative stable solution using clipped SGD algorithms. First, we characterize the clipping bias with projected clipped SGD (PCSGD) algorithm which is caused by the clipping operator that prevents PCSGD from reaching a stable solution. When the loss function is strongly convex, we quantify the lower and upper bounds for this clipping bias and demonstrate a bias amplification phenomenon with the sensitivity of data distribution. When the loss function is non-convex, we bound the magnitude of stationarity bias. Second, we propose remedies to mitigate the bias either by utilizing an optimal step size design for PCSGD, or to apply the recent DiceSGD algorithm \citep{zhang2023differentially}. Our analysis is also extended to show that the latter algorithm is free from clipping bias in the performative setting. Numerical experiments verify our findings.
\end{abstract}

\section{Introduction}
A recent line of research in statistical learning is to analyze the behavior of stochastic gradient (SGD) type algorithms in tackling stochastic optimization problems with decision-dependent distributions. The latter can be motivated by the training of prediction models under distribution shifts \citep{quinonero2022dataset} where the data may `react' to the changing prediction models.
A common application scenario is that the training data involve human input that responds strategically to the model \citep{hardt2016strategic}. Distribution shifts affect the convergence of SGD-type algorithms and their efficacy as the distributions of gradient estimates vary gradually. 
The modeling of such behavior has led to the \emph{performative prediction problem} \citep{perdomo2020performative}; see the recent overview \citep{hardt2023performative}.

For the performative prediction problem, there is a growing literature on developing stochastic algorithms to find a fixed point solution of the repeated risk minimization procedure, also known as the performative stable (PS) solution(s). In the absence of knowledge of the distribution shift, the latter is the natural solution obtained by stochastic approximation algorithms. In particular, most prior works have only focused on settings when the loss function is strongly convex, e.g., \citep{mendler2020stochastic, drusvyatskiy2023stochastic, brown2022performative, li2022state}.
When the loss function is non-convex, the existing results are much rarer. To list a few, \citep{mofakhami2023performative} assumes a least-square like loss function; \citep{li2024stochastic} provided guarantees for general non-convex performative prediction.  
As an alternative, prior works also studied stochastic algorithms for finding performative optimal solutions by an extra step that estimates the form of distribution shifts \citep{izzo2021learn, miller2021outside, narang2023multiplayer}. 

However, most of the existing works have focused on analyzing SGD algorithms admitting a smooth drift term. 
An open problem in the performative prediction literature is to analyze the behavior of \emph{clipped SGD} algorithms which limit the magnitude of the stochastic gradient at every update step; {this in turn distorts the considered distribution shift compared with the nominal dynamics with no clipping}. 
{Gradient clipping is used to deal with multiple obstacles in learning algorithms such as}
the need for privacy preservation \citep{abadi2016deep}, dealing with gradient explosion in non-smooth learning \citep{shor2012minimization} such as training neural networks \citep{mikolov2012statistical, Zhang2020Why}, solving quasi-convex problems \citep{hazan2015beyond}, etc. Despite the difficulty with treating the non-smooth drift term with clipping, the clipped SGD algorithm has been analyzed in a multitude of works in the standard  i.i.d.~sampling setting with non-decision-dependent data. 
In fact, \cite{mai2021stability} studied the non-smooth optimization setting, and \cite{gorbunov2020stochastic} analyzed the convergence under heavy-tailed noise. 
On the other hand, it was found in \citep{chen2020understanding} that the clipped SGD may exhibit an asymptotic bias, i.e., deviation from the optimal/stationary solution, for asymmetric gradient distribution, and later on \cite{koloskova2023revisiting} showed that the bias can be unavoidable. This led to several recent works which considered \emph{debiasing} clipped SGD, e.g., \citep{khirirat2023clip21} studied bias-free clipped SGD for distributed optimization, \citep{zhang2023differentially} studied a differentially private SGD algorithm using two clipping operators simultaneously.

The current paper aims to study the convergence of clipped SGD algorithms in the \emph{performative prediction} setting. We focus on the interplay between the distribution shift and the non-smooth clipped updates. 
Our main contributions are:
\begin{itemize}[leftmargin=*,topsep=0mm, itemsep=.05mm]
    \item We show that {\pcsgd} converges in expectation to a neighborhood of the performative stable solution, a fixed point studied by \citet{perdomo2020performative}. For strongly convex losses, while the convergence rate is ${\cal O}(1/t)$ where $t$ is the iteration number, we show that the clipping operation induces an \emph{asymptotic clipping bias}. For non-convex losses, we show that in $T$ iterations, the scheme converges at the rate of ${\cal O}(1/\sqrt{T} )$ towards a biased stationary performative stable solution. 
    In both cases, we show that the magnitude of bias is proportional to the sensitivity of distribution shift and clipping threshold.
    \item For the case of strongly convex losses, we further show that there exists a matching lower bound for the asymptotic clipping bias upon specifying the class of performative risk optimization problem. Together with the derived upper bound, we demonstrate a \emph{bias amplification} effect of {\pcsgd} when subject to distribution shift in performative prediction.
    \item 
    As a remedy to the bias effect of {\pcsgd}, we study the recently proposed {\dicesgd} algorithm \citep{zhang2023differentially}. 
    We show that with a doubly clipping mechanism on both the gradient and clipping error, the algorithm can converge  exactly (nearly) to the PS solution under strongly convex (non-convex) loss.
\end{itemize}
Further, we show that there exists a tradeoff between the differential privacy guarantee and computation complexity that affects the optimal step size selection. 
The paper is organized as follows. \S\ref{sec:p} introduces the performative prediction problem and the {\pcsgd} algorithm. \S \ref{sec:main} presents the theoretical analysis for {\pcsgd}. \S\ref{sec:dice} discusses the {\dicesgd} algorithm and how it mitigates the clipping bias. {Our analysis includes both  strongly convex and non-convex settings}.
\S\ref{sec:experiment} presents the numerical experiments. \vspace{+.3cm}

{\bf Notations.}
Let $\RR^d$ be the $d$-dimensional Euclidean space equipped with inner product $\Pscal{\cdot}{\cdot}$ and induced norm $\norm{x} = \sqrt{\Pscal{x}{x}}$. 
$\EE[\cdot]$ denotes taking expectation w.r.t all randomness, $\EE_{t}[\cdot] \eqdef \EE_{t}[\cdot | {\cal F}_{t}]$ means taking conditional expectation on filtration ${\cal F}_t \eqdef \sigma(\{\prm_0, \prm_1, \cdots, \prm_t\})$, where $\sigma(\cdot)$ is the sigma-algebra generated by the random variables in the operand.
\vspace{-.1cm}

\section{Problem Setup}\label{sec:p}\vspace{-.1cm}
This section introduces the performative prediction problem and a simple projected clipped SGD algorithm. 
Our goal is to learn a prediction/classification model $\prm \in {\cal X}$ via the stochastic optimization  problem: 
\beq\label{eq:p}
\textstyle \min_{\prm\in {\cal X}} ~\EE_{Z\sim {\cal D}(\prm)} [\ell(\prm; Z)],
\eeq
where ${\cal X} \subseteq \RR^d$ is a closed convex set and $\ell(\cdot)$ is differentiable w.r.t.~$\prm$. An intriguing feature of \eqref{eq:p} is that the optimization problem is defined along with a \emph{decision-dependent distribution} ${\cal D}(\prm)$ where the distribution of the sample $Z \sim {\cal D}(\prm)$ depends on $\prm$. For example, it may take the form of the best response for a utility function parameterized by $\prm$. This setup models a scenario where the prediction model may influence the outcomes it aims to predict, also known as the \emph{performative prediction} problem; see \citep{perdomo2020performative, hardt2023performative}. 

The challenge in tackling \eqref{eq:p} lies in that the decision variable $\prm$ appears in both the loss function $\ell(\prm; Z)$ and the distribution ${\cal D}(\prm)$. As a result, \eqref{eq:p} is in general \emph{non-convex} even if $\ell(\cdot)$ is (strongly) convex. 
To this end, a remedy is to study the fixed point solutions deduced from tackling the partial optimization of minimizing $\EE_{Z\sim {\cal D}(\Bprm)} [\ell(\prm; Z)]$ w.r.t.~$\prm$ when the distribution depends on a fixed $\Bprm$. 

When $\ell(\cdot)$ is strongly convex in $\prm$, a popular solution concept is the performative stable (PS) solution \citep{perdomo2020performative}:
\begin{definition}\label{def:ps}
The solution $\prm_{PS} \in {\cal X}$ is called a PS solution to \eqref{eq:p} if it satisfies
    \beq\label{def:proj_thetaps} \textstyle
    \prm_{PS} = \argmin_{ \prm \in {\cal X}}~ \EE_{Z\sim {\cal D}(\prm_{PS})} [\ell(\prm; Z)].
    \eeq
\end{definition}
Note that $\prm_{PS}$ is unique and well-defined provided that (i) the loss function is smooth, (ii) ${\cal D}(\prm)$ is not overly sensitive to shifts in $\prm$; see \S\ref{sec:anal} for details. 

Alternatively, when $\ell(\cdot)$ is non-convex in $\prm$ and ${\cal X} \equiv \RR^d$, a recent solution concept is the stationary PS (SPS) solution \citep{li2024stochastic}:
\begin{definition}\label{def:sps}
The solution $\prm_{SPS} \in {\cal X}$ is called a stationary PS solution to \eqref{eq:p} if it satisfies
    \beq\label{def:proj_sps} \textstyle
    \EE_{Z\sim {\cal D}(\prm_{SPS})} [ \grd \ell(\prm_{SPS}; Z)] = {\bm 0}.
    \eeq
\end{definition}
Note that if $\ell(\cdot)$ is strongly convex, \eqref{def:proj_sps} will recover the definition of PS solution in \eqref{def:proj_thetaps}. 

It is clear that $\prm_{PS}$, $\prm_{SPS}$ do not solve \eqref{eq:p}, nor are they stationary solutions of \eqref{eq:p}. However, they remain reasonable estimates to the solutions of \eqref{eq:p}. As shown in \citep[Theorem 4.3]{perdomo2020performative}, the disparity between $\prm_{PS}$ and the optimal solution to \eqref{eq:p} is bounded by the sensitivity of the decision-dependent distribution (cf.~A\ref{assu:w1}).

To search for $\prm_{PS}$ or $\prm_{SPS}$, SGD-based schemes with greedy deployment, i.e., the learner deploys the latest model at the population after every SGD update, have 
been widely {deployed,}
see \citep{mendler2020stochastic, drusvyatskiy2023stochastic, li2024stochastic}. 

Motivated by privacy preserving optimization \citep{abadi2016deep} and improved stability in training deep neural networks \citep{Zhang2020Why}, in this paper we are interested in SGD algorithms which clip the stochastic gradient at each update. 
With the greedy deployment scheme in mind, our first objective is to study the projected clipped SGD ({\algoname}) algorithm for performative prediction:
\begin{align}\label{algo:pcsgd}
    \textbf{ \hspace{-.3cm} Deploy: } Z_{t+1} & \sim {\cal D}( \prm_t ),\\
    \textbf{\hspace{-.3cm} Update: }\prm_{t+1} & = {\cal P}_{\cal X} \big(\prm_t -  \gamma_{t+1} \clip_c( \grd \ell(\prm_t; Z_{t+1})) \big), 
\end{align}
where ${\cal P}_{\cal X}(\cdot)$ denotes the Euclidean projection operator onto ${\cal X}$, and $\clip_c(\cdot)$ is the clipping operator: for any ${\bm g} \in \RR^d$,
\beq \label{eq:clip_op}
   \clip_{c} ( {\bm g} ): {\bm g} \in \RR^d \mapsto \min\left\{1, {c} / {\norm{\bm g}_2} \right\} {\bm g},
\eeq
such that $c>0$ is a clipping parameter. Notice that if $c \to \infty$, \eqref{algo:pcsgd} is reduced to the projected SGD algorithm for performative prediction analyzed in \citep{mendler2020stochastic, drusvyatskiy2023stochastic}.

For $c < \infty$ and especially when $c < \| \grd \ell(\prm_t; Z_{t+1}) \|$, the {\algoname} recursion pertains to a non-gradient dynamics with non-smooth drifts due to the clipping operator; see Sec.~\ref{sec:main}. Prior analysis of {\algoname} are no longer applicable in this scenario. 




\begin{remark}
Under the performative prediction setting, \cite{drusvyatskiy2023stochastic} considered an alternative clipping model which approximates the loss function $\ell(\prm; z)$ by a linear model $\ell_{\prm'}(\prm; z) \eqdef \ell(\prm^\prime; z) + \pscal{\grd \ell(\prm^\prime; z)}{\prm - \prm^\prime}$, then updates $\prm$ by applying a proximal gradient method on $\ell_\prm(\prm; z)$ within a bounded set. This clips the models under training $\prm$ instead of clipping $\grd \ell(\prm; z)$ as in \eqref{algo:pcsgd}. Such algorithm belongs to the class of model based gradient methods whose fixed point is $\prm_{PS}$ when $\ell(\cdot)$ is strongly convex.
\end{remark}

\section{Main Results for {\pcsgd}}\label{sec:main}
In pursuit for a stochastic algorithm that finds $\prm_{PS}$ or $\prm_{SPS}$, we first study the convergence properties of {\pcsgd}. Additionally, we examine the tradeoff between model efficacy and privacy preservation of the algorithm.

\subsection{Analysis of the {\pcsgd} Algorithm} \label{sec:anal}
The analysis of \eqref{algo:pcsgd} involves challenges that are unique to the decision-dependent distributions. Curiously, the expectation of the \emph{unclipped} stochastic gradient $\grd \ell( \prm_t; Z_{t+1} )$ is not a gradient. To see this, consider the special case of $\ell( \prm ; Z ) = (1/2) \| \prm - Z \|^2$ and $Z \sim {\cal D}( \prm ) \Leftrightarrow Z \sim {\cal N}( {\bm A} \prm ; {\bm I} )$. Observe that $\EE_t [ \grd \ell( \prm_t; Z_{t+1} ) ] = ( {\bm I} - {\bm A} ) \prm_t$ has a Jacobian of ${\bm I} - {\bm A}$ which is asymmetric whenever ${\bm A}$ is asymmetric.  
Analyzing {\pcsgd} requires studying a non-gradient dynamics with non-smooth drifts induced by the clipping operator.
In the subsequent discussion, we analyze {\pcsgd} through identifying a suitable Lyapunov function depending on properties of the loss function $\ell(\cdot)$.

We define the shorthand notation:
\begin{equation}\label{eq:convention}
    \begin{aligned}
     f(\prm_1, \prm_2) &\eqdef \EE_{Z\sim {\cal D}(\prm_2)}[  \ell(\prm_1; Z)].
    \end{aligned}
\end{equation}
Unless otherwise specified, the vector $\grd f( \prm_1; \prm_2 )$ refers to the gradient taken w.r.t.~the first argument $\prm_1$.
We begin by stating a few assumptions pertaining to the performative prediction problem \eqref{eq:p}:
\begin{Assumption} \label{ass:scvx}
For any $\bar{\prm} \in {\cal X}$, the function $f(\prm; \bar{\prm})$ is {$\mu$} strongly convex w.r.t.~$\prm$, i.e., for any $\prm', \prm \in {\cal X}$,\vspace{-.1cm}
\beq \notag
f(\prm'; \bar{\prm}) \geq f( \prm ; \bar{\prm} ) + \pscal{ \grd f (\prm; \bar{\prm} ) }{ \prm' - \prm } + {\textstyle \frac{\mu}{2}} \norm{ \prm' - \prm }^2.
\eeq 
\end{Assumption}
\begin{Assumption} \label{assu:lips}
The gradient map $\grd \ell(\cdot; \cdot )$ is $L$-Lipschitz, i.e., for any $\prm_1,\prm_2 \in {\cal X}$, $z_1, z_2 \in {\sf Z}$,
\beq \notag 
\begin{aligned} 
& \| \grd \ell( \prm_1; z_1 ) - \grd \ell( \prm_2; z_2 ) \|  \leq L \big( \| \prm_1 - \prm_2 \| + \| z_1 - z_2 \| \big)
\end{aligned}
\eeq 
Moreover, there exists a constant $\ell^{\star}>-\infty$ such that $\ell(\boldsymbol{\theta} ; z) \geq \ell^{\star}$ for any $\boldsymbol{\theta} \in {\cal X}$.
\end{Assumption}
The above assumptions are common in the literature, e.g., \citep{perdomo2020performative, mendler2020stochastic, drusvyatskiy2023stochastic}. 
In addition, we require that
\begin{Assumption}\label{assu:bndgrd}
There exists a constant $G \geq 0$ such that
$\sup_{\prm \in {\cal X}, z \in {\sf Z}}\norm{\grd \ell(\prm; z)} \leq G$.
\end{Assumption}
This condition can be satisfied if ${\cal X}$ is compact; or for cases such as the sigmoid loss functions. 
Notice that a similar condition is used in \citep{Zhang2020Why}.
In some cases, we will use the following standard variance condition to obtain a tighter bound:
\begin{Assumption}\label{assu:var-ncvx}
There exists constants $\sigma_0, \sigma_1 \geq 0$ such that for any $\prm_1, \prm_2\in {\cal X}$, it holds
\begin{align*}
    & \EE_{Z \sim {\cal D}(\prm_2)} \left[ \norm{\grd \ell(\prm_1; Z) - \grd f(\prm_1; \prm_2)}^2 \right] \leq \sigma_0^2 + \sigma_1^2 \norm{\grd f(\prm_1; \prm_2)}^2.
\end{align*}
\end{Assumption}

\paragraph{Strongly Convex Loss} We first discuss the convergence of {\pcsgd} towards $\prm_{PS}$ under A\ref{ass:scvx}. We provide both \emph{upper} and \emph{lower} bounds for the asymptotic bias of {\pcsgd}, and demonstrate a \emph{bias amplification} effect as the sensitivity parameter of the distribution $\beta$ increases (cf.~A\ref{assu:w1}). Our result gives the first tight characterization of the bias phenomena in the literature.

To establish the convergence of {\algoname} in this case, we will need the following additional assumptions:
\begin{Assumption} \label{assu:w1} There exists $\beta \geq 0$ such that 
\[
W_1( {\cal D}( \prm) , {\cal D}( \prm' ) ) \leq \beta \| \prm - \prm' \|,~\forall~\prm, \prm' \in {\cal X}.
\]
Notice that $W_1(\cdot, \cdot)=\inf _{J \in \mathcal{J}(\cdot, \cdot)} \mathbb{E}_{\left(z, z^{\prime}\right) \sim J}\left[\left\|z-z^{\prime}\right\|_1\right]$ is the Wasserstein-1 distance,
where $\mathcal{J}\left(\mathcal{D}(\prm), \mathcal{D}\left(\prm^{\prime}\right)\right)$ is the set of all joint distributions on ${\sf Z} \times {\sf Z}$ whose marginal distributions are $\mathcal{D}(\prm), \mathcal{D}\left(\prm^{\prime}\right)$.
\end{Assumption}
We emphasize that $\beta$ in A\ref{assu:w1} quantifies the \emph{sensitivity} of the distribution against perturbation with respect to the decision model $\prm$. It will play an important role for the analysis below.
Notice that A\ref{ass:scvx}, \ref{assu:lips}, \ref{assu:w1} imply that $\| \prm^\star - \prm_{PS} \| \leq \frac{2 L \beta}{ \mu }$ where $\prm^\star \in \Argmin_{\prm\in {\cal X}} \EE_{Z\sim {\cal D}(\prm)} [\ell(\prm; Z)]$ is an optimal solution to the performative risk minimization problem \citep{perdomo2020performative}.

The first result upper bounds the {squared norm of the} error $\hat{\prm}_{t} \eqdef \prm_{t} - \prm_{PS}$ at the $t$th iteration of {\pcsgd} in expectation:
\begin{theorem}\label{thm1}(Upper bound) Under A\ref{ass:scvx}, \ref{assu:lips}, \ref{assu:bndgrd}, \ref{assu:w1}. Suppose that $\beta <\frac{\mu}{L}$, the step sizes $\{\gamma_{t}\}_{t\geq 1}$ are non-increasing and satisfy
i) $\textstyle \frac{\gamma_{t-1}}{\gamma_{t}} \leq 1 + \frac{\mu -L\beta}{2}\gamma_t, $ and ii)
$\gamma_{t} \leq \frac{2}{\mu - L\beta}$.
Then, for any $t\geq 1$, the expected squared distance between $\prm_t$ and the performative stable solution $\prm_{PS}$ satisfies
\begin{align}\label{eq:thm1}
    \hspace{-.2cm} \EE \normtxt{\hat{\prm}_{t+1}}^2  \leq  \prod_{i=1}^{t+1}(1- \tmu \gamma_{i}) \normtxt{\hat{\prm}_0}^2 + \frac{2 c_1}{\tmu} \gamma_{t+1} + \frac{8 {\cal C}_1}{\tmu^2},
\end{align}
where $c_1 \eqdef 2(c^2 + G^2) + d \sigmaDP{2}$, ${\cal C}_1 \eqdef (\max\{G-c, 0\})^2$, and $\tmu := \mu - L \beta$.
\end{theorem}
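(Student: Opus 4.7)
My plan is to derive a one-step contraction inequality of the form
\[
\EE_t\|\hat{\prm}_{t+1}\|^2 \le (1-\tmu\gamma_{t+1})\|\hat{\prm}_t\|^2 + c_1\gamma_{t+1}^2 + \gamma_{t+1}\,\frac{{\cal C}_1}{\tmu},
\]
and then obtain \eqref{eq:thm1} by induction on $t$, using conditions (i)--(ii) on the step sizes.

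For the one-step recursion, since $\prm_{PS}\in{\cal X}$, non-expansiveness of ${\cal P}_{\cal X}$ yields $\|\hat{\prm}_{t+1}\|^2\le\|\hat{\prm}_t-\gamma_{t+1}\clip_c(\grd\ell(\prm_t;Z_{t+1}))\|^2$. Conditioning on ${\cal F}_t$ and expanding, the quadratic term is bounded by $c_1\gamma_{t+1}^2$ via A\ref{assu:bndgrd} and $\|\clip_c(\cdot)\|\le c$ (the constant $c_1$ absorbs $c$, $G$, and, if additive zero-mean DP noise is used in the update, the term $d\sigma_{DP}^2$). The cross term is the delicate part. I would split $\clip_c(\grd\ell(\prm_t;Z_{t+1}))=\grd\ell(\prm_t;Z_{t+1})+e_t$ with the pointwise bound $\|e_t\|\le\max\{0,\|\grd\ell(\prm_t;Z_{t+1})\|-c\}\le\sqrt{{\cal C}_1}$ from A\ref{assu:bndgrd}. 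By \eqref{eq:convention} the unclipped conditional mean is $\grd f(\prm_t;\prm_t)$, and I would lower-bound its inner product with $\hat{\prm}_t$ via
\begin{align*}
\pscal{\grd f(\prm_t;\prm_t)}{\hat{\prm}_t}
&= \pscal{\grd f(\prm_t;\prm_t)-\grd f(\prm_{PS};\prm_t)}{\hat{\prm}_t} \\
&\quad + \pscal{\grd f(\prm_{PS};\prm_t)-\grd f(\prm_{PS};\prm_{PS})}{\hat{\prm}_t} + \pscal{\grd f(\prm_{PS};\prm_{PS})}{\hat{\prm}_t}.
\end{align*}
A\ref{ass:scvx} lower-bounds the first term by $\mu\|\hat{\prm}_t\|^2$; a coupling argument using A\ref{assu:lips} (so that $\grd\ell(\prm_{PS};\cdot)$ is $L$-Lipschitz in $z$) together with A\ref{assu:w1} gives $\|\grd f(\prm_{PS};\prm_t)-\grd f(\prm_{PS};\prm_{PS})\|\le L\,W_1({\cal D}(\prm_t),{\cal D}(\prm_{PS}))\le L\beta\|\hat{\prm}_t\|$, so Cauchy--Schwarz bounds the second term below by $-L\beta\|\hat{\prm}_t\|^2$; the third is non-negative by first-order optimality of $\prm_{PS}$ on ${\cal X}$ for $f(\cdot;\prm_{PS})$. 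Together these yield the contraction modulus $\tmu=\mu-L\beta>0$. For the clipping bias, Young's inequality with parameter $\tmu$ gives $-2\gamma_{t+1}\pscal{\EE_t[e_t]}{\hat{\prm}_t}\le\gamma_{t+1}\tmu\|\hat{\prm}_t\|^2+\gamma_{t+1}{\cal C}_1/\tmu$, which cancels exactly half of the $-2\tmu\gamma_{t+1}\|\hat{\prm}_t\|^2$ coming from the drift and delivers the recursion above.

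The closed-form bound then follows by induction on $t$ with the right-hand side of \eqref{eq:thm1} (with $t$ in place of $t+1$) as hypothesis. Plugging into the recursion, the $\|\hat{\prm}_0\|^2$ coefficient recomposes automatically into $\prod_{i=1}^{t+1}(1-\tmu\gamma_i)$; the ${\cal C}_1$-piece obeys $(1-\tmu\gamma_{t+1})\cdot 8{\cal C}_1/\tmu^2+\gamma_{t+1}{\cal C}_1/\tmu = 8{\cal C}_1/\tmu^2-7\gamma_{t+1}{\cal C}_1/\tmu\le 8{\cal C}_1/\tmu^2$; and the $\gamma$-piece reduces to showing $(1-\tmu\gamma_{t+1})\,2c_1\gamma_t/\tmu+c_1\gamma_{t+1}^2\le 2c_1\gamma_{t+1}/\tmu$, which after rearrangement is the inequality $\gamma_t/\gamma_{t+1}\le 1+\tmu\gamma_t/2$, implied by condition (i) since $\gamma_{t+1}\le\gamma_t$. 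The base case $t=0$ is verified directly from the recursion using $\gamma_1\le 2/\tmu$ from condition (ii).

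The principal obstacle is the drift step, not the clipping. Unlike in classical SGD, $\grd f(\prm_t;\prm_t)$ is \emph{not} the gradient of any objective and its Jacobian is generically asymmetric, so one cannot invoke monotonicity of $\grd f(\cdot;\prm_t)$ at $\prm_{PS}$ directly. The three-term decomposition above is what rescues a strongly-convex-style contraction, and it is only available because A\ref{assu:w1} lets one compare $\grd f(\prm_{PS};\prm_t)$ with $\grd f(\prm_{PS};\prm_{PS})$ through the Wasserstein-$1$ distance, at the cost of the reduced modulus $\tmu=\mu-L\beta$ and the hypothesis $\beta<\mu/L$. Dealing with the non-smooth clipping operator is comparatively routine once the deterministic error bound $\|e_t\|\le\sqrt{{\cal C}_1}$ is in place: Young's inequality absorbs the expectation of $e_t$ into half the contraction, and its squared magnitude already sits inside $c_1$, producing the non-vanishing bias term $8{\cal C}_1/\tmu^2$ through the induction.
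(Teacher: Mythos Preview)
Your proposal is correct and follows essentially the same approach as the paper. The paper likewise derives a one-step contraction
\[
\EE_t\|\hat{\prm}_{t+1}\|^2 \le (1-\tmu\gamma_{t+1})\|\hat{\prm}_t\|^2 + c_1\gamma_{t+1}^2 + \frac{4{\cal C}_1}{\tmu}\gamma_{t+1},
\]
then closes it not by induction but by unrolling and invoking a summation lemma of the form $\sum_j \gamma_j^{p+1}\prod_{\ell>j}(1-\tmu\gamma_\ell)\le (2/\tmu)\gamma_t^p$; your direct induction is an equivalent way to finish, and your verification that the $\gamma$-piece reduces to condition~(i) is exactly what that lemma encodes. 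The only other cosmetic difference is the anchor in the decomposition: the paper sets $b_t=\clip_c(\grd\ell(\prm_t;Z_{t+1}))-\grd f(\prm_t;\prm_{PS})$ and applies strong convexity of $f(\cdot;\prm_{PS})$, whereas you split off the pure clipping error $e_t$ and apply strong convexity of $f(\cdot;\prm_t)$; either route lands on the modulus $\tmu=\mu-L\beta$ and the bias $\max\{G-c,0\}$ via the same Wasserstein/Lipschitz coupling and Young's inequality.
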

The proof is relegated to \S\ref{app:pf_scvx}. 
We remark that A\ref{assu:bndgrd} assumes that the stochastic gradient is uniformly bounded, where $G$ also accounts for the variance of stochastic gradient. If the 
additional assumption such as A\ref{assu:var-ncvx}
holds, it can be proven that \eqref{eq:thm1} holds with $c_1 = {\cal O}( c^2 + \sigma_0^2 )$. 
Our bound highlights the dependence on $t$ and the distribution shift parameter $\beta$.

From \eqref{eq:thm1}, as $t \to \infty$, with a properly tuned step size, the first term decays sub-exponentially to zero, the second term also converges to zero if $\gamma_t = {\cal O}(1/t)$. Regardless of the step size choice, the last term never vanish. It indicates an asymptotic clipping bias of {\pcsgd} and coincides with the observation in \cite{koloskova2023revisiting} for non-decision-dependent distribution. 

As $t \to \infty$, the bound in \eqref{eq:thm1} converges to a non-vanishing clipping bias term that reads
\beq \label{eq:bias_term_thm} 
{\tt Bias} = { 8 {\cal C}_1 } / { \tmu^2 } = {\cal O}( 1 / (\mu - L \beta)^2 ).
\eeq 
Even when $\beta = 0$, this bias term is improved over \citep{Zhang2020Why} which obtained an ${\cal O}(1/c)$ scaling. 
It turns out that the above characterization of the bias is \emph{tight} w.r.t.~$\mu-L\beta$. We {observe}:
\begin{theorem}\label{thm:lb}(Lower bound)
    For any clipping threshold $c \in (0, G)$, there exists a function $\ell(\prm; Z)$ and a decision-dependent distribution ${\cal D}(\prm)$ satisfying  A\ref{ass:scvx}, \ref{assu:lips}, \ref{assu:bndgrd}, \ref{assu:w1}, such that for all fixed-points of {\pcsgd} $\prm_{\infty}$
    satisfying $\EE_{Z\sim {\cal D}(\prminf)} [ \clip_{c} (\grd \ell(\prminf; Z)) ] = {\bm 0}$, it holds that 
    \beq 
        \norm{\prm_{\infty} - \prm_{PS}}^2 = \Omega \left( 1 / {(\mu - L\beta)^2} \right).
    \eeq 
\end{theorem}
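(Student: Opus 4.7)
The plan is to exhibit an explicit one-dimensional instance of \eqref{eq:p} whose {\pcsgd} fixed point is displaced from $\prm_{PS}$ by the required order of magnitude. The mechanism is the asymmetric-gradient source of clipping bias identified in \cite{chen2020understanding, koloskova2023revisiting}: a low-probability, large-magnitude branch is clipped while a high-probability, small-magnitude branch is not, and the decision dependence enters only through the denominator $\mu - L\beta$. Concretely, I would take $\ell(\prm; Z) = \tfrac{\mu}{2}\prm^2 - Z\prm$ on ${\cal X} = [-M,M]$ with $\mu \le 1$ so that A\ref{assu:lips} holds with $L=1$, and let ${\cal D}(\prm)$ generate $Z = \beta\prm + \xi$, where $\xi$ is independent of $\prm$ and takes the two values $\xi = 3c/2$ with probability $\tfrac{1}{4}$ and $\xi = -c/2$ with probability $\tfrac{3}{4}$, so $\EE[\xi]=0$. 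Direct checks yield A\ref{ass:scvx}--A\ref{assu:bndgrd} (the latter with $G = (\mu+\beta)M + 3c/2 > c$) and A\ref{assu:w1} with sensitivity parameter $\beta$, via the canonical coupling that reuses $\xi$ across $\prm$ and $\prm'$.

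Because $\EE[\xi] = 0$, the expected gradient equals $(\mu-\beta)\prm$ and $\prm_{PS} = 0$. At a putative fixed point $\prm_\infty$, the two gradient realizations are $g_1 = (\mu-\beta)\prm_\infty - 3c/2$ and $g_2 = (\mu-\beta)\prm_\infty + c/2$. Positing the clipping pattern in which $g_1$ is clipped to $-c$ and $g_2$ is unclipped, the fixed-point equation $\tfrac{1}{4}(-c) + \tfrac{3}{4} g_2 = 0$ yields $\prm_\infty = -c/(6(\mu-\beta))$, which lies in ${\cal X}$ once $M \ge c/(6(\mu-\beta))+1$. Back-substitution confirms the pattern: $g_1 = -5c/3$ satisfies $|g_1|>c$, and $g_2 = c/3$ satisfies $|g_2|<c$. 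A short case analysis over the four possible clipping patterns (neither, both, only $g_1$, or only $g_2$ clipped) shows this is in fact the unique fixed point. The bound follows: $\|\prm_\infty - \prm_{PS}\|^2 = c^2/(36(\mu-\beta)^2) = \Omega\bigl(1/(\mu-L\beta)^2\bigr)$, since $L = 1$ and $c$ is a fixed positive constant.

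The main obstacle is the self-consistency of the clipping pattern at the putative fixed point: one must calibrate the two-point noise so that $\EE[\xi]=0$ (fixing $\prm_{PS}$ at the origin), one branch is large enough to be clipped at $\prm_\infty$, and the other is small enough to escape clipping there, all simultaneously. This is what forces the specific pairing $\{3c/2,\,-c/2\}$ with probabilities $\{\tfrac{1}{4},\tfrac{3}{4}\}$ chosen above. Matching the denominator $\mu - L\beta$ rather than $\mu - \beta$ further requires $L$ to be tight in the construction, which is why the restriction $\mu \le 1$ is imposed so that the Lipschitz estimate $|\grd\ell(\prm_1;z_1) - \grd\ell(\prm_2;z_2)| \le \mu|\prm_1-\prm_2| + |z_1-z_2|$ is attained with $L = 1$.
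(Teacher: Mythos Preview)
Your proposal is correct and follows essentially the same approach as the paper: both construct a one-dimensional quadratic loss with a linearly shifted two-point noise distribution, identify a fixed point where exactly one branch is clipped, and verify self-consistency and uniqueness by case analysis. The only differences are cosmetic: the paper takes $\ell(\prm;z)=\tfrac12(\prm+az)^2$ with Bernoulli noise (so $\mu=1$, $L=a$, and $\prm_{PS}\neq 0$), while you take $\ell(\prm;Z)=\tfrac{\mu}{2}\prm^2 - Z\prm$ with a centered two-point noise (so $L=1$, $\mu\le 1$, and $\prm_{PS}=0$), which slightly streamlines the bias computation.
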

The proof is relegated to \S\ref{app:lb}. 

Provided that $\beta < \frac{\mu}{L}$, Theorems \ref{thm1} and \ref{thm:lb} show that {\pcsgd} admits a clipping bias\footnote{The lower bound in Theorem~\ref{thm:lb} holds for any $\beta \geq 0$. When $\beta \geq \frac{\mu}{L}$, {\pcsgd} may not converge.} of $\Theta( 1 / (\mu - L\beta)^2 )$.
It illustrates a \emph{bias amplification} effect where as the \emph{sensitivity level} of distribution $\beta$ increases, the bias will increase as $\beta \uparrow \frac{\mu}{L}$. This is a unique phenomenon to performative prediction where in addition to the clipping level, the data distribution contributes to the bias.

\paragraph{Non-convex Loss} Next, we discuss the convergence of {\pcsgd} when the loss function is smooth but possibly non-convex, i.e., without A\ref{ass:scvx}. 

Our study concentrates on the case where ${\cal X} \equiv \RR^d$ such that the projection operator is equivalent to an identity operator. 
We consider the following condition:
\begin{Assumption} \label{assu:tv} There exists $\beta \geq 0$ such that 
\[
d_{TV}( {\cal D}( \prm) , {\cal D}( \prm' ) ) \leq \beta \| \prm - \prm' \|,~\forall~\prm, \prm' \in {\cal X}.
\]
where $d_{TV}( {\cal D}( \prm) , {\cal D}( \prm' ) )$ denotes the total variation (TV) distance between the distributions ${\cal D}( \prm) , {\cal D}( \prm' )$.
\end{Assumption}
The interpretation of $\beta$ is similar to that of A\ref{assu:w1}.
Notice that as $d_{\rm TV}( \mu , \upsilon ) \geq W_1( \mu , \upsilon )$, A\ref{assu:tv} yields a stronger requirement on the sensitivity of the distribution shift than A\ref{assu:w1} in general. 
Moreover, we require that
\begin{Assumption}\label{assu:bd_loss}
    There exists a constant $\ell_{max} \geq 0$ such that
    $\sup_{\prm\in\RR^d, z\in {\sf Z}} | \ell(\prm; z) | \leq \ell_{max}$.
\end{Assumption}
The above condition can be satisfied in practical scenarios where \eqref{eq:p} involves the training of nonlinear models such as neural networks with bounded outputs.

Under the above conditions, we observe the following upper bound on the SPS measure in Definition~\ref{def:sps} for the clipped SGD algorithm:
\begin{theorem} \label{thm:nvcx}
    Under A\ref{assu:lips}, \ref{assu:bndgrd}, \ref{assu:var-ncvx}, \ref{assu:tv}, \ref{assu:bd_loss}. Let the step sizes satisfy $\sup_{t\geq 1} \gamma_{t} \leq \frac{1}{2(1+\sigma_1^2)}$. Then, for any $T\geq 1$, the iterates $\{ \prm_{t} \}_{t\geq 0}$ generates by \eqref{algo:pcsgd} satisfy:
    \begin{align}
    \sum_{t=0}^{T-1} \gamma_{t+1} \EE \left[ \norm{\grd f(\prm_t; \prm_t)}^2 \right] & \leq 8 \Delta_0 + 4L\sigma_0^2 \sum_{t=0}^{T-1}\gamma_{t+1}^2  + 8 {\sf b}(\beta,c) \sum_{t=0}^{T-1}\gamma_{t+1}, \label{thmeq:nvcx-pcsgd}
    \end{align}
    where $\Delta_0 \eqdef \EE [f(\prm_0; \prm_0) - \ell^\star]$ is an upper bound to the initial optimality gap for performative risk, and
    \beq\notag
    {\sf b}(\beta,c) \! = \! \ell_{\max} \beta (\sigma_0 + 8(1+\sigma_1^2) \ell_{\max} \beta) \! + \! 2\max\{G \! - \! c, 0\}^2. 
    \eeq
\end{theorem}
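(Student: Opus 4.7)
The plan is to establish a one-step descent inequality for the performative risk $t \mapsto f(\prm_t; \prm_t)$, telescope it over $t = 0, \dots, T-1$, and then move the gradient terms to the left-hand side. The main difficulty, compared with the classical non-convex SGD analysis, is that (i) the second argument of $f$ also shifts from $\prm_t$ to $\prm_{t+1}$, creating a distributional drift term that must be controlled by the total-variation assumption A\ref{assu:tv}, and (ii) the clipping operator breaks unbiasedness of the stochastic drift, contributing a systematic bias of order $\max\{G-c,0\}$.

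The first step is a performative descent lemma. I would write
\begin{align*}
f(\prm_{t+1};\prm_{t+1}) - f(\prm_t;\prm_t)
= \bigl[f(\prm_{t+1};\prm_t) - f(\prm_t;\prm_t)\bigr]
+ \bigl[f(\prm_{t+1};\prm_{t+1}) - f(\prm_{t+1};\prm_t)\bigr].
\end{align*}
The first bracket is handled by $L$-smoothness of $\ell(\cdot;z)$ (A\ref{assu:lips}), which gives the usual quadratic upper bound in terms of $\langle \grd f(\prm_t;\prm_t),\prm_{t+1}-\prm_t\rangle$ plus $\frac{L}{2}\|\prm_{t+1}-\prm_t\|^2$. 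The second bracket is bounded using A\ref{assu:bd_loss} and A\ref{assu:tv}: since $|\ell|\le \ell_{\max}$, duality between TV distance and bounded integrands gives $|f(\prm;\prm_1)-f(\prm;\prm_2)| \le 2\ell_{\max} d_{TV}({\cal D}(\prm_1),{\cal D}(\prm_2)) \le 2\ell_{\max}\beta\|\prm_1-\prm_2\|$, so this term is at most $2\ell_{\max}\beta\|\prm_{t+1}-\prm_t\|$.

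The second step is to take $\EE_t[\cdot]$ and substitute $\prm_{t+1}-\prm_t = -\gamma_{t+1}\clip_c(\grd\ell(\prm_t;Z_{t+1}))$. Let $g_t := \grd\ell(\prm_t;Z_{t+1})$ and $\hat g_t := \clip_c(g_t)$. The clipping bias is controlled using $\|\hat g_t - g_t\| = \max\{0,\|g_t\|-c\}\le \max\{0,G-c\}$ (by A\ref{assu:bndgrd}), so
\[
\langle \grd f(\prm_t;\prm_t),-\gamma_{t+1}\EE_t[\hat g_t]\rangle
= -\gamma_{t+1}\|\grd f(\prm_t;\prm_t)\|^2
+\gamma_{t+1}\langle\grd f(\prm_t;\prm_t),\EE_t[g_t-\hat g_t]\rangle,
\]
using $\EE_t[g_t]=\grd f(\prm_t;\prm_t)$. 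A Young inequality on the inner product peels off a $\tfrac{\gamma_{t+1}}{4}\|\grd f\|^2$ and leaves a residual of order $\gamma_{t+1}\max\{0,G-c\}^2$. The second moment $\EE_t\|\hat g_t\|^2$ is bounded by $\EE_t\|g_t\|^2 \le \sigma_0^2 + (1+\sigma_1^2)\|\grd f(\prm_t;\prm_t)\|^2$ via A\ref{assu:var-ncvx}, and the drift-sensitivity term is controlled by $\EE_t\|\hat g_t\| \le \sqrt{\EE_t\|g_t\|^2}\le \sigma_0 + \sqrt{1+\sigma_1^2}\|\grd f(\prm_t;\prm_t)\|$, followed by another Young split to absorb the $\|\grd f\|$ into $\tfrac{\gamma_{t+1}}{4}\|\grd f\|^2$ at the cost of a $(\ell_{\max}\beta)^2(1+\sigma_1^2)\gamma_{t+1}$ additive term.

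Combining these pieces and invoking the step-size condition $\sup_t\gamma_t\le\tfrac{1}{2(1+\sigma_1^2)}$ (scaled appropriately against $L$) to ensure that the $L\gamma_{t+1}^2(1+\sigma_1^2)\|\grd f\|^2$ term from smoothness is absorbed into the negative $\|\grd f\|^2$ coefficient, I obtain
\[
\EE_t[f(\prm_{t+1};\prm_{t+1})] - f(\prm_t;\prm_t)
\;\le\; -\tfrac{\gamma_{t+1}}{8}\|\grd f(\prm_t;\prm_t)\|^2
+ \tfrac{L}{2}\sigma_0^2\gamma_{t+1}^2
+ \gamma_{t+1}\,{\sf b}(\beta,c),
\]
up to the constants in the statement. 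Telescoping in $t$, using $f(\prm_T;\prm_T)\ge \ell^\star$ and the definition of $\Delta_0$, and multiplying through by $8$ yields the claimed bound \eqref{thmeq:nvcx-pcsgd}. The main obstacle is coordinating the two Young inequalities (clipping bias and distribution-shift drift) with the smoothness term so that the $\|\grd f\|^2$ absorption goes through under the stated step-size condition; keeping careful track of the constants that comprise ${\sf b}(\beta,c)$ is the only delicate bookkeeping.
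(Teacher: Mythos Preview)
Your proposal is correct and follows essentially the same route as the paper: smoothness of $\ell(\cdot;z)$ gives the descent on $f(\prm_{t+1};\prm_t)-f(\prm_t;\prm_t)$, the TV--sensitivity bound with A\ref{assu:bd_loss} handles $f(\prm_{t+1};\prm_{t+1})-f(\prm_{t+1};\prm_t)$, the clipping bias is controlled by $\max\{G-c,0\}$ via A\ref{assu:bndgrd}, the second moment and first moment of the clipped update are bounded through A\ref{assu:var-ncvx}, and two Young splits together with the step-size condition absorb all $\|\grd f\|^2$ contributions before telescoping. The only cosmetic difference is ordering: the paper first derives the smoothness inequality for $f(\prm_{t+1};\prm_t)$ and then inserts $f(\prm_{t+1};\prm_{t+1})$, whereas you begin from the Lyapunov increment and split immediately---the resulting per-step bound and constants are the same.
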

The proof is relegated to \S\ref{app:noncvx}.

To get further insights, fix any $T \geq 1$ and set a constant step size $\gamma_{t} = 1/\sqrt{T}$, we let ${\sf T}$ be a random variable chosen uniformly and independently from $\{0,1,\cdots, T-1\}$. The iterates satisfy:
\begin{align} \notag
\EE \left[ \norm{\grd f(\prm_{\sf T}; \prm_{\sf T})}^2 \right] &\leq \left( \Delta_{0} + \frac{L\sigma_0^2}{2} \right) \frac{8}{\sqrt{T}} + 8 \, {\sf b}(\beta,c) .
\end{align}
We observe that the first term vanishes as $T \to \infty$. The second term represents an upper bound to the asymptotic bias for the clipped SGD algorithm that scales with the distribution shift's sensitivity $\beta$ and the clipping threshold $c$. Notice that as shown in \citep{koloskova2023revisiting}, the clipped SGD algorithm admits a non-zero asymptotic bias for the case of non-convex optimization. In comparison, our bound can be directly controlled by the clipping threshold.

Compared to the findings for the strongly convex case, while the asymptotic bias persists in Theorem~\ref{thm:nvcx} and it also depends on $\beta$ and $\max\{G-c,0\}$, the latter effects are combined in an \emph{additive} fashion. Nevertheless, we suspect that this bound on the bias can be improved in the non-convex case. We note that in general, finding a tight lower bound for the convergence of clipped SGD in the non-convex, decision-dependent setting is an open problem.

\subsection{Differential Privacy Guarantees} \label{sec:privacy}
Our next objective is to study the implications of the convergence analysis on the privacy preservation power of {\algoname}.
To fix idea, we first introduce the definition of \emph{$(\varepsilon, \delta)$ differential privacy (DP)} measure which is customary for measuring the level of privacy leakage of a stochastic algorithm:
\begin{definition}\label{def:privacy} \citep{DF_algoithmic_foundation}
A randomized mechanism ${\cal M}:{\cal D} \mapsto {\cal R}$ satisfies $(\varepsilon,\delta)-$differential privacy if for any two adjacent inputs ${\sf D}, {\sf D}^\prime \in {\cal D}$ which differs by only 1 different sample, and for any subset of outputs $S\subseteq {\cal R}$,
\beq 
    \Pr[{\cal M}( {\sf D} ) \in S] \leq e^\varepsilon \Pr[{\cal M}( {\sf D}^\prime) \in S] + \delta.
\eeq 
\end{definition}
The definition can be understood through the case with $\varepsilon, \delta \approx 0$. In such case, the output (which consists of the entire training history) of an DP algorithm on two adjacent databases, ${\sf D}_0, {\sf D}_0'$ will be indistinguishable, thus protecting the identity of each data sample.

To discuss privacy preservation in the framework of \citep{abadi2016deep} using Definition~\ref{def:privacy}, we need to introduce a few specifications on the performative prediction problem \eqref{eq:p} and modifications to the {\pcsgd} algorithm. First, we consider a \emph{fixed, finite database} setting for \eqref{eq:p}. The database consists of $m$ samples ${\sf D}_0 := \{ \bar{z}_i \}_{i=1}^m$. The  decision-dependent distribution  ${\cal D}(\prm)$ is defined accordingly: $Z \sim {\cal D}(\prm)$ refers to a (batch of) sample drawn from the database ${\sf D}_0$ with \emph{distribution shift} governed by a random map $\mathcal{S}_i: {\cal X} \rightarrow {\sf Z}$. Let $s_i (\prm)$ be a realization of ${\cal S}_i(\prm)$, 
\beq \label{eq:shift_data}
Z = \bar{z}_i + s_i(\prm ), \quad i \sim {\rm Unif}( [m] ).
\eeq 
Second, we concentrate on a slightly modified version of \eqref{algo:pcsgd} proposed in \citep{abadi2016deep}: set $Z_{t+1} \sim {\cal D}(\prm_t)$,
\begin{align}\label{algo:pcsgd-dp}
    \hspace{-.26cm} \prm_{t+1}  = {\cal P}_{\cal X} \big(\prm_t  \!-\!  \gamma_{t+1} ( \clip_c( \grd \ell(\prm_t; Z_{t+1})) \!+\! \zeta_{t+1} ) \big),
\end{align}
for $t=0,...,T-1$,
where $\zeta_{t+1} \sim {\cal N}( {\bm 0}, \sigmaDP{2}{\bm I})$ is an artificial (Gaussian) noise added to preserve privacy. Compared to approaches such as \citep{chaudhuri2011differentially} which directly add Laplacian noise to SGD, the algorithm with clipping offers better numerical stability.  

Overall, we observe that \eqref{algo:pcsgd-dp} is \emph{a randomized mechanism} applied on the \emph{fixed dataset} ${\sf D}_{0}$, where the distribution shifts is treated as a part of the mechanism. In fact, we can analyze the DP measure of {\pcsgd} using the following corollary of 
\citep{abadi2016deep}:
\begin{corollary}\label{thm:dp}(Privacy Guarantee) 
For any $\varepsilon \leq T/m^2$, $\delta \in (0, 1)$, and $c > 0$, the {\pcsgd} algorithm with greedy deployment is $(\varepsilon, \delta)$-DP after $T$ iterations if we let
$\sigmaDP{} = {c \sqrt{T \log(1/\delta)}} / ({m \varepsilon})$.    
\end{corollary}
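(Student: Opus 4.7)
The plan is to reduce the claim to the standard moments accountant analysis of \citet{abadi2016deep}, with the extra care that the distribution shift in \eqref{eq:shift_data} does not leak additional information about the fixed database ${\sf D}_0$. I would first make explicit what the randomized mechanism ${\cal M}$ is: on input ${\sf D}_0 = \{\bar{z}_i\}_{i=1}^m$, ${\cal M}$ returns the full trajectory $(\prm_0, \prm_1, \ldots, \prm_T)$, where at each step we draw an index $i_{t+1} \sim {\rm Unif}([m])$, form $Z_{t+1} = \bar{z}_{i_{t+1}} + s_{i_{t+1}}(\prm_t)$, clip $\grd \ell(\prm_t; Z_{t+1})$ at level $c$, add Gaussian noise $\zeta_{t+1} \sim {\cal N}({\bm 0}, \sigma_{DP}^2 {\bm I})$, and project onto ${\cal X}$.

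Next I would verify the per-iteration sensitivity and the subsampling rate. Conditioned on the filtration ${\cal F}_t$ (so $\prm_t$ is fixed), the map $\bar{z}_i \mapsto \clip_c(\grd \ell(\prm_t; \bar{z}_i + s_i(\prm_t)))$ is a deterministic function whose output has $\ell_2$-norm bounded by $c$, so its $\ell_2$-sensitivity under replacing a single database entry is at most $2c$. The shift $s_i(\cdot)$ enters only through $\prm_t$, which itself is a (previous) output of the mechanism, so by the post-processing property of DP it contributes no additional leakage beyond what is already accounted for. The uniform sampling of $i_{t+1}$ realizes the Poisson/uniform subsampling used in the DP-SGD analysis with rate $q = 1/m$.

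Then I would invoke the moments accountant bound of \citet{abadi2016deep} (Theorem~1 therein): there exist absolute constants such that if $\sigma_{DP}/c \geq C q \sqrt{T \log(1/\delta)}/\varepsilon$ and $\varepsilon \leq C' q^2 T$, the composed mechanism over $T$ iterations is $(\varepsilon,\delta)$-DP. Substituting $q = 1/m$ gives the range $\varepsilon \leq T/m^2$ and the noise scale $\sigma_{DP} = c \sqrt{T \log(1/\delta)}/(m\varepsilon)$ as claimed, completing the proof by combining per-step Gaussian-mechanism privacy, subsampling amplification, and tight composition.

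The main obstacle is the performative twist: unlike ordinary DP-SGD, the queried data point $Z_{t+1}$ depends on $\prm_t$ through the shift $s_i(\prm_t)$. The key argument to make rigorously is that, because $\prm_t$ is measurable with respect to the previous outputs of the mechanism and the map $s_i$ operates without re-querying ${\sf D}_0$, the entire step can be written as post-processing of the sampled, clipped, noised query on ${\sf D}_0$; hence the classical per-step privacy bound is preserved and the standard composition goes through verbatim.
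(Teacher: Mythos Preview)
Your proposal is correct and takes essentially the same route as the paper: reduce to the moments accountant of \citet{abadi2016deep} with sampling rate $q=1/m$, and handle the performative shift $s_i(\prm_t)$ via post-processing since $\prm_t$ is already part of the mechanism's output. The paper unpacks the log-moment-generating-function machinery explicitly (composability from their Theorem~2 and the per-step bound from their Lemma~3) and then verifies the three inequalities on $\lambda$, whereas you invoke the packaged Theorem~1 directly, but the substance is the same.
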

See 
\S\ref{app:privacy} for detailed proof.
Corollary \ref{thm:dp} states that {\pcsgd} can achieve $(\varepsilon, \delta)$-DP with an appropriate DP noise level. 
Furthermore, the $\sqrt{T}$ dependence for $\sigmaDP{}$ leads to an additional source of bias in \eqref{eq:thm1}.
Together with Theorem~\ref{thm1}, the corollary gives a guideline for setting the algorithm's parameters such as clipping threshold $c$ and step size $\gamma$.

We are now equipped with the machinery to study the implications of previous convergence results with respect to the DP guarantees for strongly convex $\ell(\cdot)$. Suppose that the DP parameters $(\varepsilon, \delta)$ are fixed. From Corollary~\ref{thm:dp}, the DP noise variance $\sigmaDP{2}$ is proportional to $T$. It follows that increasing $T$ leads to an increase in the error term $c_1$ observed in Theorem~\ref{thm1} and it adds to the bias even when $\gamma_t = {\cal O}(1/T)$. In this light, the next result demonstrates how to design an optimal constant step size that minimizes the upper bound in \eqref{eq:thm1} corresponding to the efficacy of the model:


\begin{corollary}{(Finite-time Analysis)}\label{cor:finite}
   Fix the privacy parameters at $(\varepsilon, \delta)$ and clipping threshold at $c$. Assume that $G>c$ and a constant step size is used in {\pcsgd}. {For all $T\geq 1$}, to achieve the minimum for $\EE \| \hat{\prm}_T \|^2$,
   the optimal constant step size can be set as
    \beq \label{eq:gamma_star}
    \textstyle \gamma^\star = \frac{\log \Delta(\tmu)^{-1}}{\tmu T},~\text{where}~\Delta(\tmu) \eqdef \frac{2(2(c^2+G^2)+d\sigmaDP{2})}{T\tmu^2 \normtxt{\hat{\prm}_0}^2}.
    \eeq
    Let $\phi\eqdef \frac{d \log (1/\delta)}{m^2 \varepsilon^2 }$,
    then (\ref{eq:thm1}) simplifies to
    \[
    \EE\norm{\hat{\prm}_T}^2 = {\cal O}\left( \frac{{\cal C}_1}{\tmu^2} + \left[ \frac{c^2 + G^2}{T \tmu^2} + \frac{c^2 \phi}{\tmu^2} \right] \log\left(\frac{\tmu^2}{\phi c^2}\right) \right).
    \]
\end{corollary}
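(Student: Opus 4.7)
\textbf{Proof proposal for Corollary \ref{cor:finite}.} The plan is to specialize Theorem~\ref{thm1} to the constant step size regime, optimize the resulting bound in closed form, and then substitute the privacy-induced noise level from Corollary~\ref{thm:dp}. Setting $\gamma_t \equiv \gamma$ trivially satisfies the non-increasing requirement and condition (i) of Theorem~\ref{thm1}, so the only constraint we must respect is $\gamma \leq 2/\tmu$. Theorem~\ref{thm1} with constant step size yields
\[
\EE \|\hat{\prm}_T\|^2 \leq (1-\tmu\gamma)^T \|\hat{\prm}_0\|^2 + \frac{2 c_1}{\tmu}\gamma + \frac{8\mathcal{C}_1}{\tmu^2},
\]
with $c_1 = 2(c^2+G^2) + d \sigmaDP{2}$ and $\mathcal{C}_1 = (G-c)^2$ (positive since $G>c$).

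Next I would bound $(1-\tmu\gamma)^T \leq \exp(-T\tmu\gamma)$ and optimize the resulting surrogate
\[
h(\gamma) := \|\hat{\prm}_0\|^2 e^{-T\tmu\gamma} + \frac{2 c_1}{\tmu}\gamma
\]
over $\gamma > 0$. Setting $h'(\gamma)=0$ gives $e^{-T\tmu\gamma^\star} = \frac{2 c_1}{T \tmu^2 \|\hat{\prm}_0\|^2} = \Delta(\tmu)$, so
\[
\gamma^\star = \frac{\log \Delta(\tmu)^{-1}}{\tmu T},
\]
which is exactly the claimed value. For $T$ large enough this $\gamma^\star$ lies in the admissible range $(0, 2/\tmu]$, so it is a valid choice. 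Plugging $\gamma^\star$ back gives
\[
h(\gamma^\star) = \Delta(\tmu)\|\hat{\prm}_0\|^2 + \frac{2 c_1}{T \tmu^2}\log \Delta(\tmu)^{-1} = \frac{2c_1}{T\tmu^2}\bigl(1 + \log\Delta(\tmu)^{-1}\bigr),
\]
because $\Delta(\tmu)\|\hat{\prm}_0\|^2 = 2c_1/(T\tmu^2)$ by definition of $\Delta(\tmu)$.

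The final step is to translate $c_1$ using the privacy calibration of Corollary~\ref{thm:dp}: substituting $\sigmaDP{2} = c^2 T \log(1/\delta)/(m\varepsilon)^2$ gives $c_1 = 2(c^2+G^2) + c^2 T \phi$, hence
\[
\frac{2 c_1}{T\tmu^2} = \frac{4(c^2+G^2)}{T\tmu^2} + \frac{2 c^2 \phi}{\tmu^2}.
\]
For the logarithmic factor, the dominant behavior (for moderate to large $T$) is $\log\Delta(\tmu)^{-1} = \Theta\bigl(\log(\tmu^2/(\phi c^2))\bigr)$, as the $c^2 T \phi$ term dominates the constant $2(c^2+G^2)$ inside $c_1$. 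Combining with the irreducible bias $8\mathcal{C}_1/\tmu^2$ produces precisely the stated $\mathcal{O}$-bound.

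The main obstacle I anticipate is purely a matter of careful accounting rather than any deep difficulty: one must verify that $\gamma^\star$ lies in the admissible step size range and that the approximation $(1-\tmu\gamma)^T \leq e^{-T\tmu\gamma}$ is tight enough that optimizing the exponential surrogate does not lose a constant factor relative to the original product, which follows from $1-x \leq e^{-x}$ and a matching lower bound $1-x \geq e^{-x/(1-x)}$ for $\tmu\gamma^\star$ small. The logarithmic simplification of $\log\Delta(\tmu)^{-1}$ also requires a brief case analysis separating the small-$T$ regime (where $2(c^2+G^2)$ dominates $c^2 T\phi$ in $c_1$) from the large-$T$ regime; both cases absorb into the $\mathcal{O}(\cdot)$ notation after bounding $\log(a+b) \leq \log(2\max\{a,b\})$.
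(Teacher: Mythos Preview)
Your proposal is correct and follows exactly the approach the paper implicitly intends: the corollary is stated without proof in the paper, and your derivation---specializing Theorem~\ref{thm1} to constant step size, optimizing the exponential surrogate $h(\gamma)$ to obtain $\gamma^\star$, then substituting $\sigmaDP{2}$ from Corollary~\ref{thm:dp}---is precisely the routine computation that makes it a corollary. The only minor remark is that your caveats about the admissible range $\gamma^\star \leq 2/\tmu$ and the log simplification are already absorbed by the $\mathcal{O}(\cdot)$ notation and the implicit large-$T$ regime, so no further case analysis is needed.
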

Besides the ${\cal O}(1/T)$
dependence, we observe that $\gamma^\star$ is affected by the sensitivity parameter through $\tmu^2$, and the DP parameters $\varepsilon, \delta$.

Lastly, we examine the case when $T \gg 1$. Observe that by setting $\gamma_t = c_2 / \tmu T$ for any $c_2 > 1$, the first term in \eqref{eq:thm1} vanishes with sufficiently large $T$.
Similar to the above corollary, enforcing DP guarantee leads to an asymptotic bias in \eqref{eq:thm1} {which} depends on the interplay between {the} clipping threshold $c$, DP noise variance $\sigmaDP{2}$, etc. We obtain the following asymptotic guarantee upon optimizing the clipping threshold $c$: 
\begin{corollary}{(Asymptotic Analysis)}\label{cor:asym}
Fix the privacy parameters at $(\varepsilon, \delta)$. Let $T \gg 1$ and set $\gamma_t = (1+c_2) / (\tmu T)$, 
the optimum asymptotic upper bound for the deviation from $\prm_{PS}$ in Theorem \ref{thm1} is given by
\begin{align}
    &\EE\norm{ {\prm}_\infty - \prm_{PS} }^2 = {\cal O}\left( \frac{G^2}{\tmu^2} (1+\frac{d\log(1/\delta)}{ m^2 \varepsilon^2}) \right).
\end{align}
which is achieved by setting the clipping threshold as
\[ 
c^\star = \frac{2 G m^2 \varepsilon^2}{ d \log(1/\delta) + 2 m^2 \varepsilon^2}.
\]
\end{corollary}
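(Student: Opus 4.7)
The plan is to pass to the $t\to\infty$ limit in the finite-time bound of Theorem~\ref{thm1}, substitute the DP noise scaling $\sigmaDP{}=c\sqrt{T\log(1/\delta)}/(m\varepsilon)$ from Corollary~\ref{thm:dp}, and then minimize the resulting asymptotic upper bound over the clipping threshold $c$. First I would verify that the constant step size $\gamma_t\equiv \gamma=(1+c_2)/(\tmu T)$ satisfies the hypotheses of Theorem~\ref{thm1}: it is trivially non-increasing, ratio condition (i) reduces to $1\le 1+\tmu\gamma/2$, and condition (ii) $\gamma\le 2/\tmu$ holds as soon as $T\ge (1+c_2)/2$, which is part of the regime $T\gg 1$. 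The exponential prefactor $\prod_{i=1}^{t+1}(1-\tmu\gamma)=(1-(1+c_2)/T)^{t+1}\to 0$ as $t\to\infty$, so only the last two terms of \eqref{eq:thm1} survive in the limit and
\begin{align*}
\EE\norm{\prm_\infty-\prm_{PS}}^2 \;\le\; \frac{2(1+c_2)\,c_1}{\tmu^2\,T}+\frac{8\,{\cal C}_1}{\tmu^2}.
\end{align*}

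Next I would substitute $\sigmaDP{2}=c^2T\log(1/\delta)/(m^2\varepsilon^2)$ into $c_1=2(c^2+G^2)+d\sigmaDP{2}$. The contribution $4(1+c_2)(c^2+G^2)/(\tmu^2 T)$ vanishes as $T\to\infty$, while the piece coming from $d\sigmaDP{2}$ is $T$-free. Restricting attention to $c\in(0,G)$ (outside this range the bound is strictly worse) so that ${\cal C}_1=(G-c)^2$, and writing $\phi:=d\log(1/\delta)/(m^2\varepsilon^2)$ as in Corollary~\ref{cor:finite}, the asymptotic upper bound simplifies to the convex quadratic
\begin{align*}
\mathrm{UB}(c) \;=\; \frac{2(1+c_2)\phi}{\tmu^2}\,c^2 \;+\; \frac{8}{\tmu^2}\,(G-c)^2 \;=:\; A\,c^2+B\,(G-c)^2.
\end{align*}

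Finally I would solve $\mathrm{UB}'(c)=0$, giving $c^\star=BG/(A+B)=4G/(4+(1+c_2)\phi)$; at the natural choice $c_2=1$ this becomes $c^\star=2G\,m^2\varepsilon^2/(d\log(1/\delta)+2m^2\varepsilon^2)$, matching the statement. Plugging $c^\star$ back yields $\mathrm{UB}(c^\star)=ABG^2/(A+B)\le \min(A,B)G^2$, which is simultaneously ${\cal O}(G^2\phi/\tmu^2)$ and ${\cal O}(G^2/\tmu^2)$; combining these gives the advertised ${\cal O}\!\bigl(\tfrac{G^2}{\tmu^2}(1+\tfrac{d\log(1/\delta)}{m^2\varepsilon^2})\bigr)$ bound. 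There is no serious obstacle here: the real work was done in Theorem~\ref{thm1} and Corollary~\ref{thm:dp}, and the rest is a one-variable quadratic minimization. The only care needed is bookkeeping, namely (a) verifying Theorem~\ref{thm1}'s step-size hypotheses for the prescribed $\gamma$, (b) correctly separating the $1/T$-vanishing terms from the persistent asymptotic-bias terms after substituting $\sigmaDP{2}$, and (c) absorbing the harmless constant $c_2$ into the ${\cal O}(\cdot)$ when matching the stated form of $c^\star$.
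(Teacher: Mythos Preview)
Your proposal is correct and follows exactly the natural derivation implied by the paper (which does not spell out a proof for this corollary): pass to the limit in Theorem~\ref{thm1}, substitute the DP noise scaling from Corollary~\ref{thm:dp}, and minimize the resulting quadratic $Ac^2+B(G-c)^2$ in $c$. Your observation that the stated closed form for $c^\star$ corresponds to the choice $c_2=1$, with general $c_2$ absorbed into the ${\cal O}(\cdot)$, is the right way to reconcile the corollary's generic $\gamma_t=(1+c_2)/(\tmu T)$ with its specific $c^\star$ formula.
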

We observe that the asymptotic deviation from $\prm_{PS}$ is in the order of ${\cal O}( d / ( \tmu^2 \varepsilon^2 ) )$ which contains the combined effects from the sensitivity of distribution shift and DP requirement.

\section{Reducing Clipping Bias in Clipped SGD}\label{sec:dice}
The previous section illustrates that {\pcsgd} suffers from a \emph{bias amplification} phenomenon in the setting of performative prediction. 
Although the issue can be remedied by tuning the step size $\gamma$ and clipping threshold $c$, as in Corollaries~\ref{cor:finite}, \ref{cor:asym}, it may not be feasible for practical applications as the problem parameters such as $\tmu, G$ may be unknown. 

In this section, we discuss how to reduce the clipping bias through applying a recently proposed clipped SGD algorithm {\dicesgd} \citep{zhang2023differentially}. The latter is proven to achieve DP while converging to the exact solution of a convex optimization problem. We adapt this algorithm in the performative prediction setting and show that it removes the clipping bias inflicted by {\pcsgd}.

For the subsequent discussion, we consider the unconstrained setting ${\cal X} \equiv \RR^d$. The {\dicesgd} algorithm is summarized in Algorithm \ref{algo:dicesgd} with the greedy deployment mechanism for the performative prediction setting. 
Compared to {\pcsgd}, the notable differences include the use of \emph{two} clipping operators in line~4 for forming the stochastic gradient estimate $v_{t+1}$, and an \emph{error feedback} step in line~5 where $e_t$ accumulates the error due to clipping. The vector $e_t$ is  a private variable kept by the learner. 

We remark that the pseudo code describes a general implementation which includes the Gaussian noise mechanism for privacy protection. As shown in \cite{zhang2023differentially}, the use of two clipping operators reduces the privacy leakage. With an appropriate $\sigmaDP{2}$, the algorithm is guaranteed to achieve $(\varepsilon, \delta)$-Renyi DP, a relaxed notion for DP proposed in \cite{mironov2017renyi}. 
Nonetheless, when the DP requirement is not needed, one may set $\sigmaDP{2} = 0$ for a clipped algorithm with reduced bias.

\begin{algorithm}[tb]
   \caption{{\dicesgd} with Greedy Deployment}
   \label{algo:dicesgd}
\begin{algorithmic}[1]
   \STATE {\bfseries Input:} $C_1$, $C_2$, $a_0, a_1$, $\varepsilon$, $\delta$, {${\sf D}_0$}, $\sigmaDP{2}$ with $C_2 \geq C_1$, initialization $\prm_0$, $e_0 = 0$.
   \FOR{$t=0 \textbf{ to } T-1$}
    \STATE Draw new sample $Z_{t+1}\sim {\cal D}(\prm_t)$ and Gaussian noise $\zeta_{t+1} \sim {\cal N}(0, \sigmaDP{2} \mathbf{I})$.
    \STATE $v_{t+1} = \clip_{C_1}(\grd \ell(\prm_t; Z_{t+1})) + \clip_{C_2}(e_t)$.
    \STATE $\prm_{t+1}  =  \prm_t - \gamma_{t+1} (v_{t+1} + \zeta_{t+1}),$  
    
    $e_{t+1} = e_t + \grd \ell(\prm_t; Z_{t+1}) - v_{t+1}$.
   \ENDFOR
   \STATE {\bfseries Output:} Last iterate $\prm_{T}$.
\end{algorithmic}
\end{algorithm} 


Importantly, the error feedback mechanism is effective in removing the asymptotic bias. To see that this insight can be extended to the performative prediction setting, observe that any fixed point $(\bar{e}, \bar{\prm})$ of Algorithm~\ref{algo:dicesgd} satisfies
\beq 
\begin{aligned}
-\clip_{C_2}( \bar{e} ) & = \EE_{ Z \sim {\cal D}(\bar{\prm}) } [ \clip_{C_1}( \grd \ell( \bar{\prm}; Z) ) ] \\
\grd f( \bar{\prm}; \bar{\prm} ) - \clip_{C_2}( \bar{e} ) & = \EE_{ Z \sim {\cal D}(\bar{\prm}) } [ \clip_{C_1}( \grd \ell( \bar{\prm}; Z) ) ]
\end{aligned}
\eeq 
Under the condition $C_2 \geq C_1$, a feasible fixed point $(\bar{e}, \bar{\prm})$ shall satisfy 
\[
    \grd f( \bar{\prm}; \bar{\prm} ) = {\bm 0} \text{ and } \bar{e} = - \EE_{ Z \sim {\cal D}(\bar{\prm}) } [ \clip_{C_1}( \grd \ell( \bar{\prm}; Z) ) ]
\]
since 
\beq \label{eq:fixedpt}
\| \bar{e} \| \leq \EE_{ Z \sim {\cal D}(\bar{\prm}) } [ \|\clip_{C_1}( \grd \ell( \bar{\prm}; Z) ) \| ] \leq C_1,
\eeq 
where the first inequality is due to Jensen's inequality.
The condition $\grd f( \bar{\prm}; \bar{\prm} ) = {\bm 0}$ implies  $\bar{\prm} = \prm_{PS}$ {under strongly-convex $\ell(\cdot)$}. 

We conclude by presenting the convergence for the {\dicesgd} algorithm in the performative prediction setting.  We first observe the assumption: 
\begin{Assumption}\label{assu:et}
There exists a constant $M$ such that for any $t\geq 1$, 
$\EE [\norm{e_t}^2 ] \leq M^2$.
\end{Assumption}
A\ref{assu:et} is verified empirically in our experiments for the case of $C_2 \geq C_1$; see \S\ref{app:addexp}.
Similar to {\pcsgd}, the {\dicesgd} algorithm is also a non-gradient algorithm with non-smooth drifts. The following analysis is achieved by designing a suitable Lyapunov function for each type of $\ell(\cdot)$.

\paragraph{Strongly Convex Loss}
Notice that A\ref{assu:var-ncvx} implies that there exists $G,B \geq 0$ with
\beq \label{eq:gb_bound}
    \EE_{Z \sim {\cal D}(\prm)} [ \| \grd \ell( \prm; Z ) \|^2 ] \leq G^2 + B^2 \| \prm - \prm_{PS} \|^2,
\eeq 
for any $\prm\in \RR^d$.
Denote  $\tmu \eqdef \mu - L\beta$ and $\Bprm_{t} := \prm_t - \gamma_t e_t - \prm_{PS}$, we have
\begin{theorem} \label{thm:dicesgd}
Under A\ref{ass:scvx}, \ref{assu:lips}, \ref{assu:var-ncvx}, \ref{assu:w1}, \ref{assu:et} and \eqref{eq:gb_bound} holds. Suppose that $\beta < \frac{\mu}{L}$, there exists $a_0, a_1,b,\bar{b} \geq 0$ such that the step size {of {\dicesgd}} satisfies i) $\gamma_t = a_0/(a_1 + t)$, $a_0 \geq 1 / b$,
   ii) $\gamma_t \leq \min \{ 
   {\tmu}/({16 b}),
   {\blue{8}} / {\tmu}, { \tmu} / {4B^2} \}$, and iii)   ${\gamma_t^2} / {\gamma_{t+1}^2} \leq 1+ \bar{b} \gamma_{t+1}^2$,
Then, for any $t \geq 0$,
\begin{align}
 \EE\norm{\Bprm_{t+1}}^2 &  \leq \prod_{i=1}^{t+1}\left( 1 - \frac{\tmu}{\blue{4}} \gamma_{i} \right)\norm{\Bprm_{0}}^2 + \frac{\blue{8} (G^2 + d\sigmaDP{2})}{\tmu}\gamma_{t+1} \label{eq:dicesgd_conv} 
 \\
 &\quad + \frac{\blue{16}L^2M^2(1+\beta)^2}{\tmu^2}\gamma_{t+1}^2  + \frac{\blue{24} b^2 M^2}{\tmu} \gamma_{t+1}^3  + \frac{\blue{16} L^2M^2 \bar{b}(1+\beta)^2}{\tmu^2} \gamma_{t+1}^4. \notag
\end{align}
\end{theorem}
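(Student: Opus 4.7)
The strategy is to analyze the shifted iterate $\bar{\prm}_t = \prm_t - \gamma_t e_t - \prm_{PS}$, which serves as the Lyapunov quantity. The first step is to derive a clean recursion for $\bar{\prm}_{t+1}$. Substituting the two update rules from Algorithm~\ref{algo:dicesgd} and canceling $v_{t+1}$, I obtain
\begin{equation*}
\bar{\prm}_{t+1} = \bar{\prm}_t + (\gamma_t - \gamma_{t+1})\, e_t - \gamma_{t+1}\, \grd \ell(\prm_t; Z_{t+1}) - \gamma_{t+1}\, \zeta_{t+1}.
\end{equation*}
Notice that both clipping operators disappear from the shifted recursion; this is the essence of error feedback and is the reason why the asymptotic bias can be eliminated. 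The price paid is that $\bar{\prm}_t$ only coincides with $\prm_t - \prm_{PS}$ up to an $\mathcal{O}(\gamma_t \|e_t\|)$ perturbation, which is where Assumption~\ref{assu:et} will enter.

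Next, I would square both sides, take conditional expectation on $\mathcal{F}_t$, and use $\EE_t[\grd \ell(\prm_t; Z_{t+1})] = \grd f(\prm_t; \prm_t)$ together with $\EE_t[\zeta_{t+1}] = 0$, $\EE_t\|\zeta_{t+1}\|^2 = d\sigmaDP{2}$. The key inner product is $-2\gamma_{t+1}\langle \bar{\prm}_t, \grd f(\prm_t; \prm_t)\rangle$, which I split as $-2\gamma_{t+1}\langle \prm_t - \prm_{PS}, \grd f(\prm_t; \prm_t)\rangle + 2\gamma_t\gamma_{t+1}\langle e_t, \grd f(\prm_t; \prm_t)\rangle$. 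For the first piece I invoke performative strong convexity: since $\grd f(\prm_{PS}; \prm_{PS}) = 0$, A\ref{ass:scvx} gives $\mu$-strong convexity of $f(\cdot; \prm_t)$, while A\ref{assu:lips} and A\ref{assu:w1} control the Lipschitz shift $\|\grd f(\prm_{PS}; \prm_t) - \grd f(\prm_{PS}; \prm_{PS})\| \leq L\beta\|\prm_t - \prm_{PS}\|$. Combining them yields the contraction estimate $\langle \prm_t - \prm_{PS}, \grd f(\prm_t; \prm_t)\rangle \geq \tmu\|\prm_t - \prm_{PS}\|^2$.

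The remaining terms are handled by Young's inequality and Assumption~\ref{assu:et}: the cross term $2\gamma_t\gamma_{t+1}\langle e_t, \grd f(\prm_t; \prm_t)\rangle$ and the drift $2(\gamma_t - \gamma_{t+1})\langle \bar{\prm}_t, e_t\rangle$ are each split into a small multiple of $\tmu\gamma_{t+1}\|\bar{\prm}_t\|^2$ plus residuals controlled by $M^2$, using step-size condition (iii) to bound $(\gamma_t-\gamma_{t+1})^2 \leq \bar{b}\gamma_{t+1}^4$. For $\EE_t\|\grd\ell(\prm_t; Z_{t+1})\|^2$ I use the $(G,B)$-bound in \eqref{eq:gb_bound} and the reverse triangle identity $\|\prm_t - \prm_{PS}\|^2 \leq 2\|\bar{\prm}_t\|^2 + 2\gamma_t^2 M^2$; the $2B^2\gamma_{t+1}^2$ coefficient on $\|\bar{\prm}_t\|^2$ is absorbed into the contraction using $\gamma_t \leq \tmu/(4B^2)$, and the $B^2\gamma_t^2 M^2$ residual is absorbed into the $\gamma_{t+1}^3$ bucket. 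To convert the contraction factor from $\tmu$ on $\|\prm_t-\prm_{PS}\|^2$ into $\tmu/4$ on $\|\bar{\prm}_t\|^2$, I again apply $\|\prm_t - \prm_{PS}\|^2 \geq \tfrac{1}{2}\|\bar{\prm}_t\|^2 - \gamma_t^2 M^2$ and collect the slack into higher-order-in-$\gamma_{t+1}$ terms.

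After collecting, the one-step bound takes the form
\begin{equation*}
\EE_t\|\bar{\prm}_{t+1}\|^2 \leq \Big(1 - \tfrac{\tmu}{4}\gamma_{t+1}\Big)\|\bar{\prm}_t\|^2 + A_1 \gamma_{t+1}^2 + A_2 \gamma_{t+1}^3 + A_3 \gamma_{t+1}^4,
\end{equation*}
where $A_1$ carries $G^2 + d\sigmaDP{2}$ and $L^2M^2(1+\beta)^2$, $A_2$ carries $b^2 M^2$, and $A_3$ carries $L^2 M^2 \bar{b}(1+\beta)^2$. Taking total expectation and unrolling by induction with $\gamma_t = a_0/(a_1+t)$ gives the claimed bound \eqref{eq:dicesgd_conv}, where the $\gamma_{t+1}$ versus $\gamma_{t+1}^k$ powers arise from the standard ``Robbins--Monro'' summation $\sum_{i \leq t+1}\gamma_i^k \prod_{j>i}(1-\tfrac{\tmu}{4}\gamma_j) = \mathcal{O}(\gamma_{t+1}^{k-1})$ under the step-size schedule in (i)--(ii). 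The main obstacle, I expect, is the bookkeeping of cross terms involving $e_t$: unlike the i.i.d.\ error-feedback analyses, the performative shift couples $e_t$ with $\grd f(\prm_t; \prm_t)$ through $\beta$, and one must carefully route each piece into the contraction versus the higher-order residuals so that the final coefficient multiplying $\|\bar{\prm}_t\|^2$ is strictly positive, which is precisely what forces the upper bounds on $\gamma_t$ via $b$ and $B^2$ in condition (ii).
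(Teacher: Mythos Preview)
Your plan is correct and follows essentially the same architecture as the paper: define the shifted iterate $\bar{\prm}_t=\prm_t-\gamma_t e_t-\prm_{PS}$, derive the clipping-free recursion, square, take conditional expectation, extract a $(1-\tfrac{\tmu}{4}\gamma_{t+1})$ contraction, and unroll via the standard summation $\sum_i \gamma_i^{k}\prod_{j>i}(1-\tfrac{\tmu}{4}\gamma_j)=\mathcal{O}(\gamma_{t+1}^{k-1})$.

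The one noteworthy difference is in how the key inner product $\langle \bar{\prm}_t,\grd f(\prm_t;\prm_t)\rangle$ is handled. You split off $\gamma_t e_t$ first, apply performative strong monotonicity at $\prm_t$ to get $\tmu\|\prm_t-\prm_{PS}\|^2$, and then convert back via $\|\prm_t-\prm_{PS}\|^2\ge \tfrac12\|\bar{\prm}_t\|^2-\gamma_t^2\|e_t\|^2$. The paper instead stays entirely in $\bar{\prm}_t$-coordinates: it inserts $\grd f(\tilde{\prm}_t;\prm_{PS})$ and $\grd f(\tilde{\prm}_t;\tilde{\prm}_t)$, uses strong convexity directly at $\tilde{\prm}_t$, and handles the $\prm_t$ vs.\ $\tilde{\prm}_t$ mismatch via the gradient Lipschitz bound $\|\grd f(\prm_t;\prm_t)-\grd f(\tilde{\prm}_t;\tilde{\prm}_t)\|\le L(1+\beta)\gamma_t\|e_t\|$. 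Both routes produce the same $L^2(1+\beta)^2 M^2/\tmu$ residuals; the paper's avoids the back-and-forth norm conversion and makes the origin of the $(1+\beta)^2$ factor more transparent. A small bookkeeping point: your one-step bound should have terms up to $\gamma_{t+1}^5$ (not $\gamma_{t+1}^4$), with $G^2+d\sigmaDP{2}$ at order $\gamma_{t+1}^2$ and $L^2M^2(1+\beta)^2$ at order $\gamma_{t+1}^3$, so that after summation they land at $\gamma_{t+1}$ and $\gamma_{t+1}^2$ respectively as in the statement; also, it is condition~(i) (giving $\gamma_t-\gamma_{t+1}\le b\gamma_{t+1}^2$) rather than~(iii) that generates the $b^2M^2\gamma_{t+1}^3$ term, while~(iii) is what converts $\gamma_t^2\gamma_{t+1}$ into $\gamma_{t+1}^3+\bar{b}\gamma_{t+1}^5$.
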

We present the detailed proof in \S\ref{app:dice}. 
Under strong convex losses A\ref{ass:scvx}, our analysis shows that {\dicesgd} converges to a unique fixed point ($\prm_{PS}$) in the mean square sense. Note that the analysis framework here differs significantly from that for non-convex losses.

We observe from \eqref{eq:dicesgd_conv} that the dominant term on the right-hand-side is the second term which behaves as ${\cal O}( \gamma_{t+1} ) = {\cal O}(1/t)$. Consequently, we have  \vspace{-.1cm}
\[
\EE \| \prm_t - \prm_{PS} \|^2  \leq 2 \EE\norm{\Bprm_{t}}^2 + 2 \gamma_t^2 \EE \norm{e_t}^2 = {\cal O}(1/t),
\]
due to A\ref{assu:et}. In other words, the {\dicesgd} algorithm is asymptotically \emph{unbiased}.

\paragraph{Non-convex Loss}
Our last endeavor is to show that for the non-convex loss setting, {\dicesgd} also reduces the bias due to clipping {in performative prediction settings}. We observe the convergence result:
\begin{theorem}\label{thm:ncvx-dice}
Under A\ref{assu:lips}, \ref{assu:var-ncvx}, \ref{assu:tv}, \ref{assu:bd_loss}, \ref{assu:et}. 
If we set $\gamma = 1/\sqrt{T}$, then for sufficiently large $T$, it holds
\begin{equation} \label{eq:dicesgd_noncvx}
    \min_{t=0,...,T-1} \EE[ \| \grd f( \prm_t; \prm_t ) \|^2 ] = {\cal O} \left( \frac{ 1 }{ \sqrt{T} } + {\sf b} \beta \right) ,
\end{equation}
where ${\sf b} = {\cal O}( \ell_{\max} ( (C_1+C_2) + \sqrt{d} \sigmaDP{2} ) )$.
\end{theorem}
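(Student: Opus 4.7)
My plan is to adapt the non-convex performative descent analysis of Theorem~\ref{thm:nvcx} from {\pcsgd} to {\dicesgd}, replacing the per-step clipping-bias term by a telescoping error-feedback contribution. I would track the Lyapunov $V_t = f(\prm_t;\prm_t)$ and split
\[
V_{t+1} - V_t = [f(\prm_{t+1};\prm_{t+1}) - f(\prm_{t+1};\prm_t)] + [f(\prm_{t+1};\prm_t) - f(\prm_t;\prm_t)],
\]
bounding the first bracket by $2\ell_{\max}\beta\,\|\prm_{t+1}-\prm_t\|$ using A\ref{assu:tv} and A\ref{assu:bd_loss}, and the second by the $L$-smoothness descent lemma for $f(\cdot;\prm_t)$ from A\ref{assu:lips}. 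This is the same template as for the {\pcsgd} non-convex bound.

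The {\dicesgd}-specific step is to exploit the identity
\[
\EE_t[v_{t+1}] = \grd f(\prm_t;\prm_t) + e_t - \EE_t[e_{t+1}],
\]
which follows by taking $\EE_t[\cdot]$ on both sides of $e_{t+1} = e_t + \grd\ell(\prm_t;Z_{t+1}) - v_{t+1}$ and using $Z_{t+1}\sim\mathcal{D}(\prm_t)$. Substituting it into $-\gamma\langle\grd f(\prm_t;\prm_t), \EE_t[v_{t+1}]\rangle$ yields the desired descent term $-\gamma\|\grd f(\prm_t;\prm_t)\|^2$ together with a residual $\gamma\langle \grd f(\prm_t;\prm_t), \EE_t[e_{t+1}] - e_t\rangle$. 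Summing $t=0,\ldots,T-1$ and applying summation by parts with $e_0 = 0$ and the second-moment bound $\EE\|e_t\|^2\leq M^2$ from A\ref{assu:et}, this residual reduces to a boundary piece plus successive increments of $\grd f(\prm_t;\prm_t)$. The remaining moment bounds use $\|\prm_{t+1} - \prm_t\| \leq \gamma(C_1+C_2+\|\zeta_{t+1}\|)$ (since $\|v_{t+1}\|\leq C_1+C_2$), A\ref{assu:var-ncvx} for $\EE_t\|\grd\ell(\prm_t;Z_{t+1})\|^2$, and $\EE\|\zeta_{t+1}\|^2 = d\sigmaDP{2}$ for the DP noise. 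Setting $\gamma = 1/\sqrt{T}$ and dividing by $T\gamma$ produces the initial-gap and smoothness-noise contributions at rate $\mathcal{O}(1/\sqrt{T})$, while the TV piece aggregates into the asymptotic bias ${\sf b}\beta = \mathcal{O}\bigl(\ell_{\max}\beta((C_1+C_2) + \sqrt{d}\sigmaDP{2})\bigr)$. Since $\min_t$ is upper-bounded by the uniform average, the claim follows.

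The main obstacle is the summation-by-parts step, which generates successive increments $\|\grd f(\prm_t;\prm_t) - \grd f(\prm_{t-1};\prm_{t-1})\|$. The first-argument part is controlled by $L\|\prm_t - \prm_{t-1}\|$ via A\ref{assu:lips}, but the change in the distributional (second) argument of $\grd f$ is not directly controlled by A\ref{assu:tv}--A\ref{assu:bd_loss} (these apply to $f$ itself, not to $\grd f$) without a uniform bound on $\|\grd\ell\|$, which is not assumed here. Handling this term---most likely by exploiting the fact that only the clipped quantities $\clip_{C_1}(\grd\ell(\prm_t;Z_{t+1}))$ and $\clip_{C_2}(e_t)$ actually drive the iterate displacement, so that $C_1, C_2$ act as effective uniform bounds in the relevant expressions---is the delicate point, and it is precisely what ensures that the $\beta$-driven bias in {\dicesgd} enters ${\sf b}$ additively, rather than multiplicatively as in the strongly-convex {\pcsgd} bound of Theorem~\ref{thm1}.
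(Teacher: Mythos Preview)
Your plan diverges from the paper's proof in one essential way, and the obstacle you flag is exactly the symptom of that divergence. The paper does \emph{not} track $f(\prm_t;\prm_t)$ and does not use summation by parts on $\langle\grd f_t,\EE[e_{t+1}]-e_t\rangle$. Instead it introduces the virtual iterate $\Tprm_t \eqdef \prm_t - \gamma e_t$, which (for constant $\gamma$) obeys the \emph{unclipped} recursion
\[
\Tprm_{t+1} = \Tprm_t - \gamma\bigl(\zeta_{t+1} + \grd\ell(\prm_t;Z_{t+1})\bigr),
\]
and then runs the standard descent-lemma argument on $f(\Tprm_{t+1};\prm_t)$ versus $f(\Tprm_t;\prm_t)$. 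After taking $\EE_t$, the inner product becomes $-\langle\grd f(\Tprm_t;\prm_t),\grd f(\prm_t;\prm_t)\rangle$, and the mismatch is handled by
\[
\|\grd f(\Tprm_t;\prm_t)-\grd f(\prm_t;\prm_t)\| \le L\|\Tprm_t-\prm_t\| = L\gamma\|e_t\|,
\]
which uses only first-argument Lipschitzness (A\ref{assu:lips}) and A\ref{assu:et}. No control on the second (distributional) argument of $\grd f$ is ever needed, because the second argument is held fixed at $\prm_t$ throughout this step. The performative shift is absorbed separately, at the level of \emph{function values}, via $|f(\Tprm_{t+1};\prm_{t+1})-f(\Tprm_{t+1};\prm_t)|\le 2\ell_{\max}\beta\|\prm_{t+1}-\prm_t\|$ from A\ref{assu:tv}, A\ref{assu:bd_loss}.

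Your route, by contrast, forces you to bound $\|\grd f(\prm_t;\prm_t)-\grd f(\prm_{t-1};\prm_{t-1})\|$, where \emph{both} arguments move. As you correctly note, the second-argument change in $\grd f$ is not controlled by A\ref{assu:tv}, A\ref{assu:bd_loss} alone: you would need either a uniform gradient bound (A\ref{assu:bndgrd}, not assumed here) to pair with TV, or $W_1$-sensitivity (A\ref{assu:w1}, also not assumed) to pair with A\ref{assu:lips}. The ``$C_1,C_2$ act as effective uniform bounds'' intuition does not rescue this, because those clipping levels bound the \emph{update increment} $\|\prm_t-\prm_{t-1}\|$, not the integrand $\grd\ell(\prm;z)$ that appears inside $\grd f(\cdot;\prm_t)-\grd f(\cdot;\prm_{t-1})$. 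So under the stated hypotheses your summation-by-parts term cannot be closed, whereas the virtual-iterate trick bypasses the issue entirely.
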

The detailed theorem and proof can be found in \S\ref{app:dicesgd-ncvx}. Our analysis involved a few modifications over \citep[Theorem 3.6]{zhang2023differentially} for the decision-dependent distribution. 

Unlike the convergence Theorems \ref{thm:dp} \& \ref{thm:nvcx} for the {\pcsgd} algorithm, the analysis of {\dicesgd} relaxes the uniformly bounded gradient assumption (A\ref{assu:bndgrd}) to the variance-based assumption (A\ref{assu:var-ncvx}). This relaxation is enabled by the feedback mechanism embedded in the {\dicesgd} algorithm.

Importantly, from \eqref{eq:dicesgd_noncvx}, we observe that when $\beta \approx 0$, there is no asymptotic bias for {\dicesgd}. However, we also note that unlike {\pcsgd}, the multiplicative factor ${\sf b}$ depends on $\sigmaDP{2}, C_1, C_2$ which indicates that the bias due to distribution shift may become more sensitive {when using} {\dicesgd}.



\begin{remark} 
    The DP guarantees of {\dicesgd} with distribution shift can be studied by extending \citep[Theorem 3.7 \& Appendix A.2]{zhang2023differentially}. Particularly, we can model the distribution shift through a random mapping $f: (Z + D_0) \mapsto D_0$, similar to the one introduced in Appendix \ref{app:privacy}. Applying the data processing inequality shows that a comparable bound for the DP guarantees of the {\dicesgd} algorithm under distribution shift to \citep[Theorem 3.7]{{zhang2023differentially}} can be established.    
\end{remark}

\vspace{-.1cm}
\section{Numerical Experiments}\label{sec:experiment}
All experiments are performed with Python on a server using a single Intel Xeon 6138 CPU thread.
In the interest of space, we only consider the experiments with strongly convex $\ell(\cdot)$ and focus on the setting with DP guarantees to validate our theoretical findings.
For the sake of fair comparison between the {\pcsgd} and the {\dicesgd} algorithms, we choose the set ${\cal X}$ to be such that the optimal point of the unconstrained and constrained case of \eqref{def:proj_thetaps} will coincide. 
To maintain the same DP guarantees, we respectively set the DP noise standard deviation for {\pcsgd} and {\dicesgd} as $\sigmaDP{}$ and $\sqrt{96}\sigmaDP{}$, according to 
Corollary \ref{thm:dp} and \citep[Theorem 3.7]{zhang2023differentially}. 
For the {\dicesgd} algorithm, we set $C_1 = C_2$. This decision is motivated by the necessity to maintain a balanced trade-off in Algorithm \ref{algo:dicesgd}, where augmenting the values of $C_1$ and $C_2$ would entail an increase in the variance of the Gaussian mechanism in line 3. 
\begin{figure*}[t]
    \centering
    \includegraphics[width=.3\linewidth]{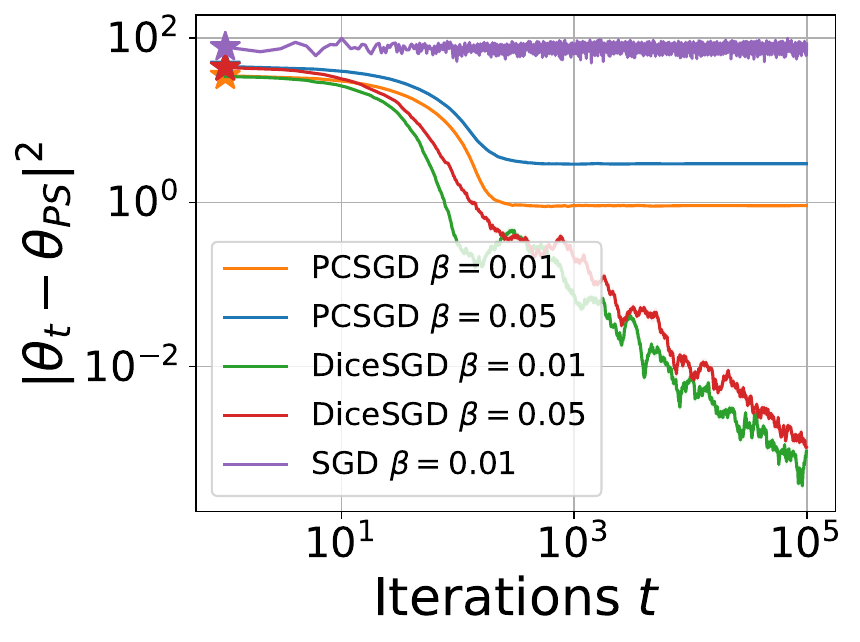}
    \includegraphics[width=.3\textwidth]{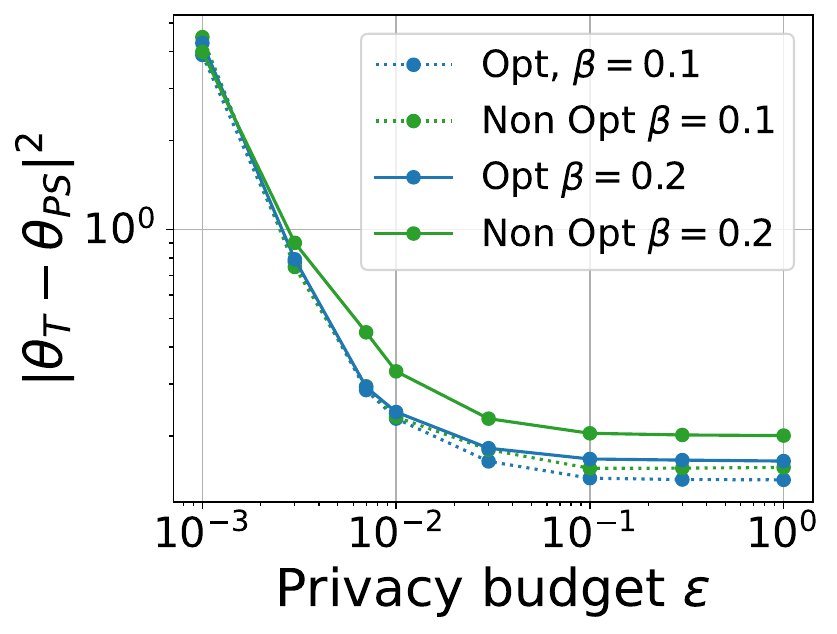}
    \includegraphics[width=.3\textwidth]{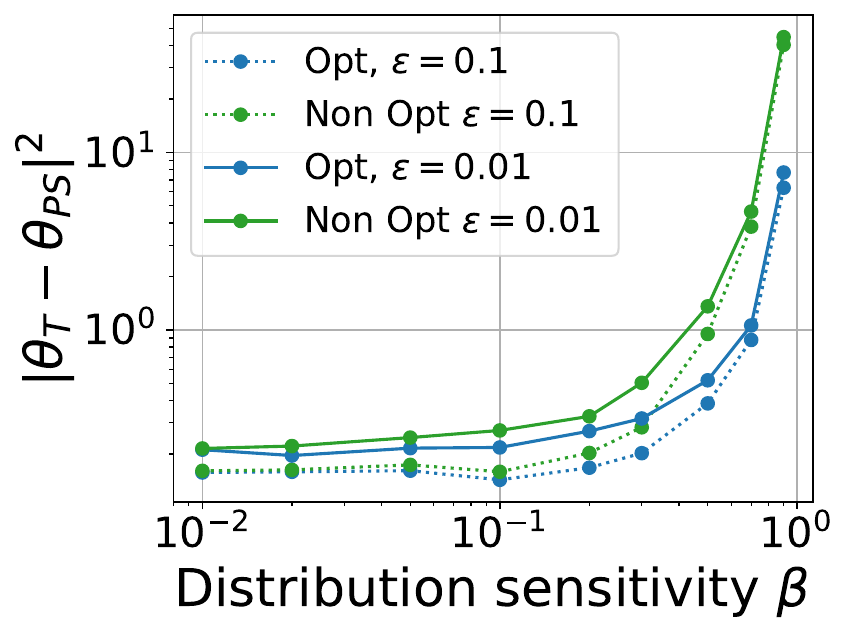}
    \vspace{-.3cm}
    \caption{{\bf Quadratic Minimization} (First) The performative stability gap $\norm{\prm_t - \prm_{PS}}^2$. (Second) Trade off between privacy budget $\varepsilon$ and bias. (Third) Bias amplification effect due to  $\beta$. 
    }\vspace{-.2cm}
    \label{fig1}
\end{figure*}

\begin{figure*}[t]
    \centering
    \includegraphics[width=.3\textwidth]{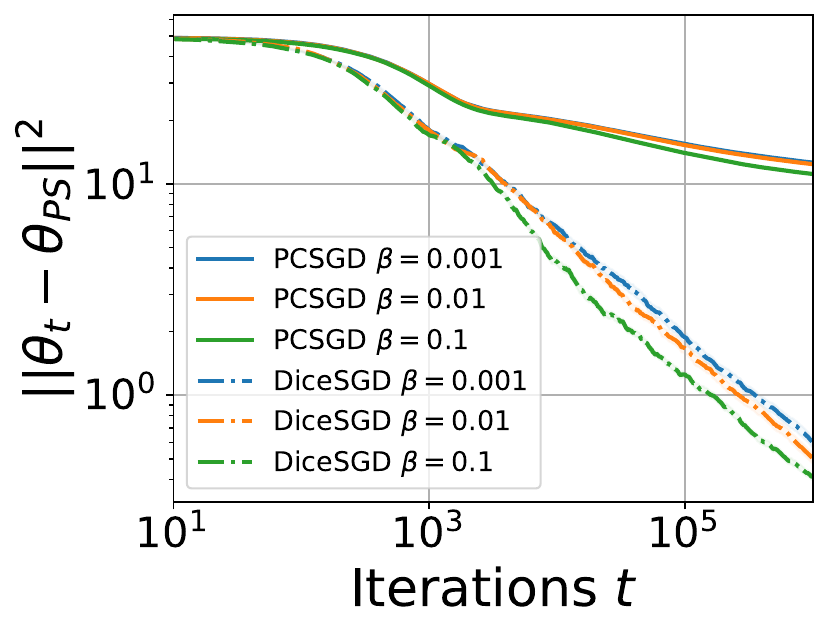}
    \includegraphics[width=.3\textwidth]{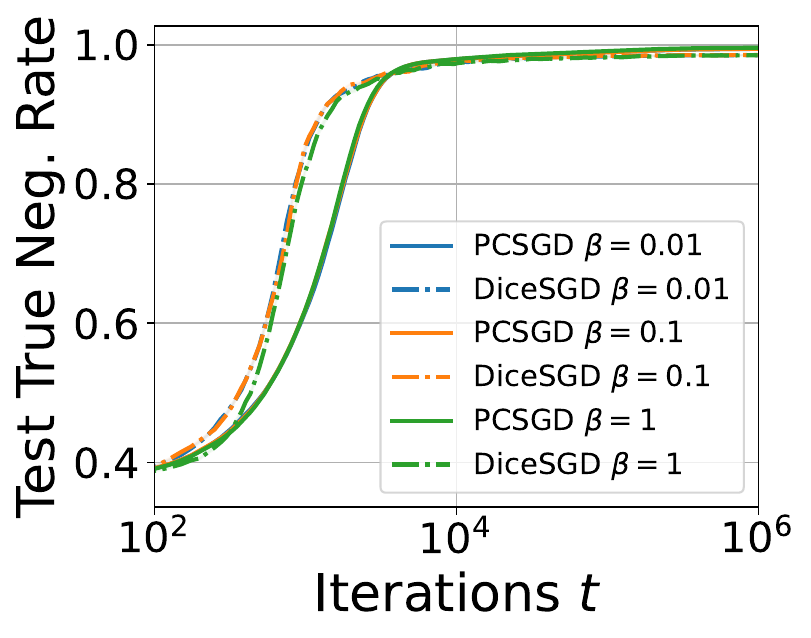}
    \includegraphics[width=.3\textwidth]{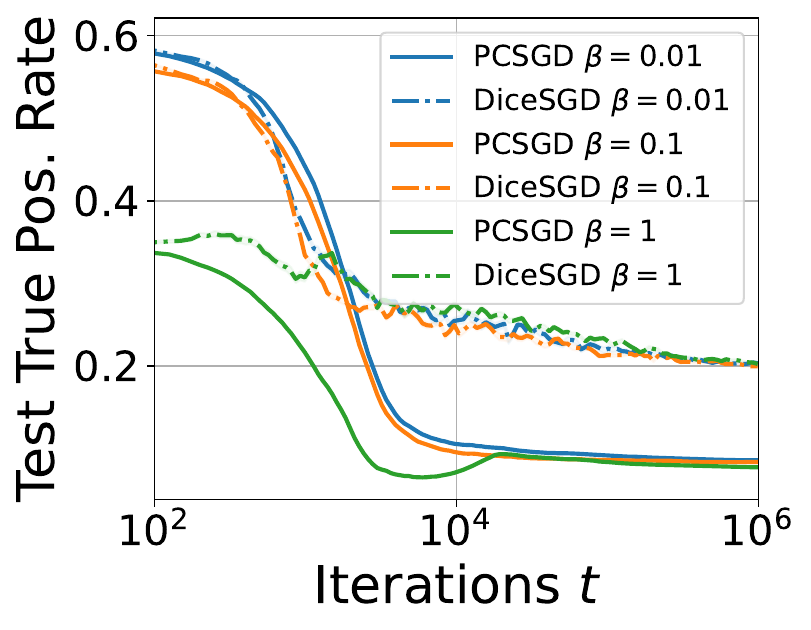}
    \vspace{-.3cm}
    \caption{{\bf Logistic Regression} (First) Gap between iterations and performative stable point $\norm{\prm_t - \prm_{PS}}^2$.  (Second) Test true negative rate with shifted distribution. (Third) Test true positive accuracy with shifted distribution.
    }\vspace{-.3cm}
    \label{fig2}
\end{figure*}

\vspace{+.3cm}
\noindent {\bf Quadratic Minimization.} The \emph{first problem} is concerned with the validation of Theorems \ref{thm1}, \ref{thm:lb}, and \ref{thm:dicesgd}. Here, we consider a scalar performative risk optimization problem with synthetic data
\[ 
\min_{\prm \in {\cal X}}~ \EE_{z\sim {\cal D}(\prm)}[(\prm + az)^2/2],
\]
where ${\cal D}(\prm)$ is a uniform distribution over the data points $\{ b \tilde{Z}_i - \beta \prm \}_{i=1}^m$ such that $\tilde{Z}_i \sim {\cal B}(p)$ is Bernoulli and $a>0, b>0$, $p<1/2$. We also set ${\cal X} = [-10, 10]$ and observe that for $0 < \beta < a^{-1}$, the performative stable solution is $\prm_{PS} = -\frac{\bar{p}a}{1-a\beta}$, where $\bar{p} = \frac{1}{m} \sum_{i=1}^m \tilde{Z}_i$ is the sample mean.

We set $p=0.1, \varepsilon=0.1, 
{\delta = 1/m}, \beta \in \{0.01, 0.05\}, a=10, b=1, c= C_1=C_2=1$, the sample size {$m = 10^5$}.
The step size is $\gamma_t = \frac{10}{100+t}$ with the initialization $\prm_0= 5$. In Fig.~\ref{fig1} ({first plot}), we compare $| \prm_t - \prm_{PS} |^2$ against the iteration number $t$ 
using plain {\bf SGD} with DP noise, {\pcsgd} and {\dicesgd}. As observed, adding the DP noise compromises {\bf SGD}'s convergence. {\pcsgd} cannot converge to $\prm_{PS}$ due to the clipping bias which increases as $\beta \uparrow$. Meanwhile, {\dicesgd} finds a \emph{bias-free} solution as it converges to $\prm_{PS}$ at rate ${\cal O}(1/t)$. 

Our next experiments examine the trade-off between clipping bias of {\pcsgd} $| \prm_T - \prm_{PS} |^2$ and privacy budget $\varepsilon$ or distribution sensitivity $\beta$. We set $ a=1, b=6, c=c^\star\approx 2.32$, $T=10^5$, $\beta \in \{0.1, 0.2\}$ or $\varepsilon \in \{0.01, 0.1\}$, while keeping the other parameters unchanged. Using Corollary~\ref{cor:finite}, we set the optimal step size
according to $\gamma^\star$ in \eqref{eq:gamma_star},
and the non-optimal step size as $\gamma= \frac{\log (1/\Delta(\mu))}{\mu T}$ to simulate the scenario when the presence of distribution shift is unknown. From Fig.~\ref{fig1} ({second} \& {third} plots), setting the optimal step size $\gamma^\star$ adapted to distribution shifts achieves a smaller bias in all settings. Meanwhile, as the privacy budget \blue{decreases} $\varepsilon \downarrow 0$ or the sensitivity of distribution shift \blue{increases} $\beta \uparrow \frac{\mu}{L}$, the bias of {\pcsgd} increases.  \vspace{+.3cm}

\noindent {\bf Logistic Regression.} We consider the real dataset {\tt GiveMeSomeCredit} \cite{kaggle2011give} with $m=15776$ samples and $d=10$ features. We split the training/test sets using the ratio of $7:3$. The learner aims to find a classifier via minimizing the regularized logistic loss:
\begin{align} \notag 
\ell(\prm; z) = \alpha(z) \left( \log(1 \!+\! \exp(x^\top \prm)) \!-\! y x^\top \prm \right) \!+\! \frac{\eta}{2} \norm{\prm}^2,
\end{align}
where $\eta = 10^2/m$ is a regularization parameter, $z \equiv (x,y) \in \RR^d \times \{0,1\}$ is the training sample, {$\alpha(z)=y+1$ is a label weight}, $y=0$ $(y=1)$ denotes a customer without (with) history of defaults. The strategic behavior of the population, i.e., their features $x$ are adapted to $\prm$ through maximizing a quadratic utility function.

We fix the privacy budget at $\varepsilon=1$, $\delta = 1/m$, clipping thresholds {$c=C_1=C_2=1$, step size $\gamma_{t}=50/(5000+t)$}, and sensitivity parameter $\beta\in \{0.001, 0.01, 0.1\}$. From Fig.~\ref{fig2} (first plot), we observe the gap $\norm{\prm_t - \prm_{PS}}^2$ of {\dicesgd} decays at ${\cal O}(1/t)$ as $t\rightarrow\infty$, {\pcsgd} achieve the steady state due to bias after the transient stage. This coincides with Theorems \ref{thm1} \& \ref{thm:dicesgd}. In Fig. \ref{fig2} (second \& third plots), we compare the test accuracy against the iteration number $t$.
Here, the trajectory becomes more unstable as $\beta \uparrow$. 
An interesting observation is that increasing the sensitivity $\beta$ leads to lower true positive rate.

\paragraph{Conclusions.} This paper initiates the study of clipped SGD algorithms in the performative prediction setting. In both cases with strongly convex and non-convex losses, we discovered a \emph{bias amplification} effect with the {\pcsgd} algorithm and proposed several remedies including an extension of the {\dicesgd} algorithm to performative prediction. 


\bibliographystyle{plainnat}
\bibliography{ref.bib}

\newpage 
\appendix
\onecolumn


\section{Useful Facts}

In the proof of Theorems \ref{thm1} \& \ref{thm:dicesgd}, we utilize the following Lemma which is introduced in \citep[Lemma 6]{li2022multi}. The lemma upper bounds the coefficients of contraction equations in the form of \eqref{eq:contraction_proof_thm1} and \eqref{eq:contraction_eq_thm_dicesgd}:

\begin{lemma}{\citep[Lemma 6]{li2022multi}}\label{lem:aux}
Consider a sequence of non-negative, non-increasing step sizes $\{\gamma_{t}\}_{t \geq 1}$. Let $a>0$, $p\in \ZZ_+$ and $\gamma_{1}<2 / a$. If 
\[\left(\frac{\gamma_{t}}{\gamma_{t+1}}\right)^p \leq 1+\frac{a}{2}\cdot \gamma_{t+1}^p\] 
for any $t \geq 1$, then
\beq 
\sum_{j=1}^{t} \gamma_{j}^{p+1} \prod_{\ell=j+1}^{t}\left(1-\gamma_{\ell} a\right) \leq \frac{2}{a} \gamma_{t}^p,~~\forall~t \geq 1.
\eeq 
\end{lemma}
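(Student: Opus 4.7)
The natural approach is induction on $t$, exploiting the fact that the quantity $S_t := \sum_{j=1}^{t} \gamma_j^{p+1}\prod_{\ell=j+1}^{t}(1-\gamma_\ell a)$ satisfies the one-step linear recursion $S_{t+1} = (1-a\gamma_{t+1})\,S_t + \gamma_{t+1}^{p+1}$, obtained by splitting off the $j=t+1$ term and factoring $(1-a\gamma_{t+1})$ out of the remaining product. The inductive hypothesis to carry along is $S_t \leq (2/a)\gamma_t^p$. The base case $t=1$ is immediate since $S_1 = \gamma_1^{p+1}$ and $\gamma_1 < 2/a$ gives $\gamma_1^{p+1} \leq (2/a)\gamma_1^p$.

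For the inductive step, I would substitute the hypothesis into the recursion so the goal becomes the purely scalar inequality
\[
(1-a\gamma_{t+1})\,\tfrac{2}{a}\gamma_t^{p} + \gamma_{t+1}^{p+1} \;\leq\; \tfrac{2}{a}\gamma_{t+1}^{p}.
\]
I would split on the sign of $1-a\gamma_{t+1}$. When $1-a\gamma_{t+1}\leq 0$, the contribution of $S_t$ is non-positive and $S_{t+1}\leq \gamma_{t+1}^{p+1}\leq (2/a)\gamma_{t+1}^{p}$ follows directly from $\gamma_{t+1}\leq \gamma_1 < 2/a$. When $1-a\gamma_{t+1}>0$, the key lever is the ratio hypothesis, rewritten as $\gamma_t^{p}\leq \gamma_{t+1}^{p}+\tfrac{a}{2}\gamma_{t+1}^{2p}$; substituting this upper bound on $\gamma_t^p$ into the displayed inequality reduces the claim to checking $(1-a\gamma_{t+1})\bigl(\tfrac{2}{a}\gamma_{t+1}^{p}+\gamma_{t+1}^{2p}\bigr)+\gamma_{t+1}^{p+1}\leq \tfrac{2}{a}\gamma_{t+1}^{p}$, which after expanding collapses to an inequality only in $u:=\gamma_{t+1}$ and $a$ that is readily verified using the monotonicity assumption on $\gamma_t$ together with $\gamma_1<2/a$.

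\textbf{Main obstacle.} The substantive part is closing the induction in the regime $1-a\gamma_{t+1}>0$: the ratio condition must be strong enough to absorb the new $\gamma_{t+1}^{p+1}$ term into the gap $(2/a)(\gamma_{t+1}^{p}-(1-a\gamma_{t+1})\gamma_t^{p})$. The cross terms $a\gamma_{t+1}\cdot\gamma_t^p$ and $(a/2)\gamma_{t+1}^{p+1}$ are precisely balanced by the $(a/2)\gamma_{t+1}^{2p}$ slack permitted by the hypothesis $(\gamma_t/\gamma_{t+1})^p \leq 1 + (a/2)\gamma_{t+1}^{p}$, so the calibration of constants is tight; the bookkeeping (rather than any deep idea) is the delicate step. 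Once this algebraic verification is done, the induction closes and the bound $S_t \leq (2/a)\gamma_t^p$ holds for all $t\geq 1$.
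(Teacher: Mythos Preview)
The paper omits its own proof of this lemma, deferring to the cited source, so there is no in-paper argument to compare against; your induction via the recursion $S_{t+1}=(1-a\gamma_{t+1})S_t+\gamma_{t+1}^{p+1}$ is the standard route and is correct for $p=1$. However, the step you label ``readily verified'' in Case~2 does not close for $p\ge 2$: after expanding $(1-a u)\bigl(\tfrac{2}{a}u^{p}+u^{2p}\bigr)+u^{p+1}\le \tfrac{2}{a}u^{p}$ with $u=\gamma_{t+1}$, the inequality reduces to $u^{p-1}(1-au)\le 1$, and this is \emph{not} implied by $u<2/a$ when $p\ge 2$ (for $p=2$ the left side can reach $1/(4a)$). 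In fact the lemma as printed is false for $p\ge 2$: take $p=2$, $a=0.01$, $\gamma_1=100$, $\gamma_2=50$; the sequence is non-increasing, $\gamma_1<2/a=200$, and $(\gamma_1/\gamma_2)^2=4\le 1+(a/2)\gamma_2^2=13.5$, yet $S_2=0.5\cdot 10^6+50^3=6.25\times10^5>5\times10^5=(2/a)\gamma_2^2$. The ratio hypothesis should almost certainly read $1+(a/2)\gamma_{t+1}$ rather than $1+(a/2)\gamma_{t+1}^{p}$; with that correction your substitution gives $\gamma_t^{p}\le\gamma_{t+1}^{p}+(a/2)\gamma_{t+1}^{p+1}$, and the inductive inequality collapses to $-a\gamma_{t+1}^{p+2}\le 0$, valid for every $p$.

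A smaller gap: in Case~1 you write that the contribution $(1-a\gamma_{t+1})S_t$ is non-positive, which presupposes $S_t\ge 0$. That is not part of your stated induction hypothesis; you would need either to carry $0\le S_t\le(2/a)\gamma_t^{p}$ through the induction or to give a separate argument for nonnegativity.
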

The proof of this lemma is presented in \citep[Lemma 6]{li2022multi} and is therefore omitted.

Additionally, we rely on the following lemma that provides smoothness guarantees regarding the performative prediction gradients:
\begin{lemma}\label{lem:continuity}
    Under A\ref{assu:lips}, \ref{assu:w1}. For any $\prm_0, \prm_1, \prm, \prm^\prime \in \RR^d$, it holds that
    \beq
        \norm{\grd f(\prm_0, \prm) - \grd f(\prm_1, \prm^\prime)} \leq L \norm{\prm_0 - \prm_1} + L\beta \norm{\prm - \prm^\prime}.
    \eeq
\end{lemma}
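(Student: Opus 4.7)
The plan is to bound the two arguments separately via a triangle inequality split. Specifically, I would insert the intermediate term $\grd f(\prm_1; \prm)$ to write
\[
\grd f(\prm_0; \prm) - \grd f(\prm_1; \prm^\prime) = \big( \grd f(\prm_0; \prm) - \grd f(\prm_1; \prm) \big) + \big( \grd f(\prm_1; \prm) - \grd f(\prm_1; \prm^\prime) \big),
\]
so that the first difference varies only in the model argument (controlled by A\ref{assu:lips}) and the second varies only in the distribution argument (controlled by A\ref{assu:w1}).

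For the first piece, I would exchange gradient and expectation (valid under the standard domination provided by A\ref{assu:lips} and A\ref{assu:bndgrd}) to get $\grd f(\prm_0; \prm) - \grd f(\prm_1; \prm) = \EE_{Z \sim {\cal D}(\prm)}[\grd \ell(\prm_0; Z) - \grd \ell(\prm_1; Z)]$, then apply Jensen's inequality followed by the $L$-Lipschitz bound on $\grd \ell(\cdot; z)$ in its first argument (setting $z_1 = z_2$ in A\ref{assu:lips}) to conclude that this term is at most $L\| \prm_0 - \prm_1\|$.

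For the second piece, the crucial tool is to pass through an optimal coupling. Let $J^\star \in {\cal J}({\cal D}(\prm), {\cal D}(\prm^\prime))$ be the coupling that achieves (or approximately achieves, up to $\varepsilon$) the Wasserstein-1 infimum. Then
\[
\grd f(\prm_1; \prm) - \grd f(\prm_1; \prm^\prime) = \EE_{(Z,Z^\prime) \sim J^\star}\big[ \grd \ell(\prm_1; Z) - \grd \ell(\prm_1; Z^\prime) \big],
\]
so by Jensen's inequality and the $L$-Lipschitz property of $\grd \ell(\prm_1; \cdot)$ in its second argument (A\ref{assu:lips} with $\prm_1 = \prm_2$), this norm is bounded by $L \, \EE_{J^\star}[\|Z - Z^\prime\|] \leq L \beta \|\prm - \prm^\prime\|$ by A\ref{assu:w1} (sending $\varepsilon \to 0$ if one uses an approximate coupling). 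Combining both bounds via the triangle inequality gives the claim.

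The only mildly delicate step is exchanging gradient and expectation and invoking an optimal (or near-optimal) coupling, which requires that $\grd \ell(\prm_1; \cdot)$ be measurable and integrable against both ${\cal D}(\prm)$ and ${\cal D}(\prm^\prime)$; this is immediate from A\ref{assu:lips} together with the boundedness A\ref{assu:bndgrd}. Aside from this, the argument is entirely routine, so I do not anticipate any substantive obstacle.
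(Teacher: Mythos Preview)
Your proposal is correct and is precisely the standard argument. The paper does not give its own proof of this lemma; it simply cites \citep[Lemma~2.1]{drusvyatskiy2023stochastic}, whose proof proceeds exactly as you outline---split via the intermediate point $\grd f(\prm_1;\prm)$, bound the first difference by Lipschitzness in the model argument, and bound the second via an optimal Wasserstein coupling and Lipschitzness in the sample argument. One small remark: the lemma is stated under A\ref{assu:lips} and A\ref{assu:w1} only, so your appeal to A\ref{assu:bndgrd} for the interchange of gradient and expectation is an extra hypothesis; in fact the Lipschitz bound A\ref{assu:lips} already gives the needed integrability (linear growth of $\grd\ell$ plus finite first moments implied by $W_1<\infty$), so A\ref{assu:bndgrd} is not required here.
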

The proof of this lemma can be found in \citep[Lemma 2.1]{drusvyatskiy2023stochastic}.

\section{Proof of Theorem~\ref{thm1}}
\label{app:pf_scvx} 
We outline the main steps in proving the convergence for {\pcsgd}. In particular, we shall consider the general form of {\pcsgd} in \eqref{algo:pcsgd-dp} where the DP noise is introduced. To this end, we aim at proving the following bound: 
\begin{theorem*} Under A\ref{ass:scvx}, \ref{assu:lips}, \ref{assu:bndgrd}, \ref{assu:w1}. Suppose that $\beta <\frac{\mu}{L}$, the step sizes $\{\gamma_{t}\}_{t\geq 1}$ are non-increasing and satisfy
i) $\textstyle \frac{\gamma_{t-1}}{\gamma_{t}} \leq 1 + \frac{\mu -L\beta}{2}\gamma_t, $ and ii)
$\gamma_{t} \leq \frac{2}{\mu - L\beta}$.
Then, for any $t\geq 1$, the expected squared distance between $\prm_t$ and the performative stable solution $\prm_{PS}$ satisfies
\begin{align} \notag
    \hspace{-.2cm} \EE \normtxt{\hat{\prm}_{t+1}}^2  \leq  \prod_{i=1}^{t+1}(1- \tmu \gamma_{i}) \normtxt{\hat{\prm}_0}^2 + \frac{2 c_1}{\tmu} \gamma_{t+1} + \frac{8 {\cal C}_1}{\tmu^2},
\end{align}
where $c_1 \eqdef 2(c^2 + G^2) + d \sigmaDP{2}$, ${\cal C}_1 \eqdef (\max\{G-c, 0\})^2$, and $\tmu := \mu - L \beta$.
\end{theorem*}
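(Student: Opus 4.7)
The plan is to derive a one-step recursion of the form
\[
\EE\|\hat{\prm}_{t+1}\|^2 \leq (1-\tmu\gamma_{t+1})\EE\|\hat{\prm}_t\|^2 + c_1 \gamma_{t+1}^2 + \frac{\mathcal{C}_1}{\tmu}\gamma_{t+1},
\]
and then unroll it via Lemma \ref{lem:aux} supplemented by a telescoping identity. First, since $\prm_{PS}\in\mathcal{X}$ and $\mathcal{P}_{\mathcal{X}}$ is non-expansive, I drop the projection and expand the square. Taking $\EE_t[\cdot]$ eliminates the linear dependence on $\zeta_{t+1}$ (centered and independent of $Z_{t+1}$), and I decompose the conditional mean of the clipped gradient as
\[
\EE_t\bigl[\clip_c(\grd\ell(\prm_t; Z_{t+1}))\bigr] = \grd f(\prm_t;\prm_t) - r_t,\quad r_t \eqdef \EE_{Z\sim\mathcal{D}(\prm_t)}\!\left[\grd\ell(\prm_t; Z) - \clip_c(\grd\ell(\prm_t; Z))\right].
\]
A Jensen-plus-A\ref{assu:bndgrd} argument then bounds $\|r_t\|\leq \max\{G-c,0\} = \sqrt{\mathcal{C}_1}$.

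To extract the contraction factor $\tmu = \mu - L\beta$ despite the distribution shift, I combine $\mu$-strong convexity of $f(\cdot;\prm_t)$ (A\ref{ass:scvx}) with the gradient-continuity estimate $\|\grd f(\prm_{PS};\prm_t)-\grd f(\prm_{PS};\prm_{PS})\|\leq L\beta\|\hat{\prm}_t\|$; the latter follows from Lemma \ref{lem:continuity} via Wasserstein duality applied to A\ref{assu:lips} and A\ref{assu:w1}. Together with the variational inequality $\pscal{\grd f(\prm_{PS};\prm_{PS})}{\prm_t - \prm_{PS}}\geq 0$ from Definition \ref{def:ps}, this yields $\pscal{\grd f(\prm_t;\prm_t)}{\hat{\prm}_t}\geq \tmu\|\hat{\prm}_t\|^2$. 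The cross term with the clipping bias, $2\gamma_{t+1}\pscal{r_t}{\hat{\prm}_t}$, is then split by Young's inequality with weight $\tmu$, so that its $\|\hat{\prm}_t\|^2$ portion combines with the strong-convexity term to leave the net factor $(1-\tmu\gamma_{t+1})$, while the complementary piece is absorbed into $\gamma_{t+1}\mathcal{C}_1/\tmu$. The quadratic term $\EE_t\|g_{t+1}+\zeta_{t+1}\|^2$ is bounded by $c_1$ using $\|\clip_c(\cdot)\|\leq c$, A\ref{assu:bndgrd}, independence of $\zeta_{t+1}$, and the elementary inequality $\|a+b\|^2\leq 2\|a\|^2 + 2\|b\|^2$ to accommodate the stated constant $c_1 = 2(c^2+G^2) + d\sigmaDP{2}$.

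The final step is to unroll the recursion. Conditions (i) and (ii) on the step size are exactly the hypotheses of Lemma \ref{lem:aux} with $p=1$ and $a=\tmu$, which gives
\[
c_1 \sum_{j=1}^{t+1}\gamma_j^2\!\prod_{\ell=j+1}^{t+1}(1-\tmu\gamma_\ell) \leq \frac{2c_1}{\tmu}\gamma_{t+1}.
\]
The clipping-bias contribution is controlled by the telescoping identity
\[
\sum_{j=1}^{t+1}\tmu\gamma_j\!\prod_{\ell=j+1}^{t+1}(1-\tmu\gamma_\ell) = 1 - \prod_{\ell=1}^{t+1}(1-\tmu\gamma_\ell) \leq 1,
\]
which bounds the non-vanishing asymptotic term by $\mathcal{O}(\mathcal{C}_1/\tmu^2)$, matching the stated $8\mathcal{C}_1/\tmu^2$ up to numerical slack introduced by the Young's inequality splits.

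The main obstacle is the Young's inequality calibration in the contraction step: any weight larger than $\tmu$ destroys the contraction factor $(1-\tmu\gamma_{t+1})$, while any smaller weight inflates the $1/\tmu^2$ bias further. Tuning it to $\tmu$ is precisely what produces the tight $\Theta(1/(\mu-L\beta)^2)$ scaling of the asymptotic bias — the very scaling matched by the lower bound in Theorem \ref{thm:lb} and responsible for the bias-amplification effect as $\beta\uparrow \mu/L$. A secondary bookkeeping item is verifying that the admissible step-size schedule satisfying (i)--(ii) is non-empty and compatible with the decreasing-schedule hypothesis of Lemma \ref{lem:aux}.
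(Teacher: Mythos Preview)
Your proposal is correct and follows essentially the same route as the paper: non-expansiveness of the projection, strong convexity combined with the Wasserstein--Lipschitz estimate of Lemma~\ref{lem:continuity} to extract a $(1-\tmu\gamma_{t+1})$ contraction, a Young-type split of the clipping-bias cross term, and unrolling via Lemma~\ref{lem:aux}. The only notable difference is bookkeeping: the paper anchors the bias term at $\grd f(\prm_t;\prm_{PS})$ (so the distribution shift lives inside $b_t$ and strong convexity is applied at the fixed distribution ${\cal D}(\prm_{PS})$), whereas you anchor at $\grd f(\prm_t;\prm_t)$ and absorb the shift directly into the contraction via the variational inequality --- your decomposition and the exact telescoping identity in fact yield a constant of order ${\cal C}_1/\tmu^2$, which is tighter than the paper's $8{\cal C}_1/\tmu^2$, so the ``numerical slack'' goes in your favor.
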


To simplify notations, we define 
\beq 
\begin{aligned}
    \Tgrd g(\prm_{t}) &\eqdef \clip_{c} (\grd \ell(\prm_{t}; Z_{t+1})), \quad  b_t \eqdef \Tgrd g(\prm_t) - \grd f(\prm_t; \prm_{PS}).
\end{aligned}
\eeq 
Recall that $\hat{\prm}_t := \prm_t - \prm_{PS}$, the following lemma characterizes the one-step progress of {\pcsgd}.
\begin{lemma}\label{lem:onedes}
Under A\ref{assu:lips}, \ref{assu:bndgrd}, \ref{assu:w1}. For any $t\geq 0$, it holds 
\begin{align}
    \EE_t \normtxt{\hat{\prm}_{t+1}}^2
    &\leq (1 \!-\! 2\mu \gamma_{t+1}) \normtxt{\hat{\prm}_t}^2 \!+\! \gamma_{t+1}^2 \big( \min\{c^2, G^2 \} + d\sigmaDP{2}\big) 
     - 2\gamma_{t+1}\Pscal{\hat{\prm}_{t}}{\EE_t[b_t]} 
    .\label{lemeq:onedes-pcsgd}
\end{align}
\end{lemma}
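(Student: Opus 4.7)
The plan is to start from the update rule \eqref{algo:pcsgd-dp}, use non-expansiveness of the projection onto $\mathcal{X}$ (noting $\prm_{PS} \in \mathcal{X}$ is a fixed point that projects to itself), and then expand the squared norm. Concretely, I would write
\[
\|\hat{\prm}_{t+1}\|^2 = \|{\cal P}_{\cal X}(\prm_t - \gamma_{t+1}(\Tgrd g(\prm_t) + \zeta_{t+1})) - {\cal P}_{\cal X}(\prm_{PS})\|^2 \leq \|\hat{\prm}_t - \gamma_{t+1}(\Tgrd g(\prm_t) + \zeta_{t+1})\|^2,
\]
then expand the right-hand side into three pieces: $\|\hat{\prm}_t\|^2$, a cross term $-2\gamma_{t+1}\langle \hat{\prm}_t, \Tgrd g(\prm_t) + \zeta_{t+1}\rangle$, and a squared noise-plus-clipped-gradient term.

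Next, I would take the conditional expectation $\EE_t[\cdot]$. Since $\zeta_{t+1} \sim {\cal N}(\mathbf{0},\sigmaDP{2}{\bm I})$ is drawn independently of ${\cal F}_t$ and is independent of $Z_{t+1}$, the cross terms involving $\zeta_{t+1}$ vanish in expectation, and $\EE_t\|\zeta_{t+1}\|^2 = d\sigmaDP{2}$. For the quadratic term I would use the key observation that, by definition of the clipping operator in \eqref{eq:clip_op}, $\|\Tgrd g(\prm_t)\| \leq c$ deterministically, while by A\ref{assu:bndgrd}, $\|\grd \ell(\prm_t;Z_{t+1})\| \leq G$, so clipping only shortens the vector and $\|\Tgrd g(\prm_t)\| \leq \min\{c,G\}$. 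This yields
\[
\EE_t\|\Tgrd g(\prm_t) + \zeta_{t+1}\|^2 \leq \min\{c^2, G^2\} + d\sigmaDP{2}.
\]

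It remains to handle the cross term $-2\gamma_{t+1}\langle \hat{\prm}_t, \EE_t[\Tgrd g(\prm_t)]\rangle$. Using the definition $b_t = \Tgrd g(\prm_t) - \grd f(\prm_t;\prm_{PS})$, I would split it as
\[
-2\gamma_{t+1}\langle \hat{\prm}_t, \grd f(\prm_t;\prm_{PS})\rangle -2\gamma_{t+1}\langle \hat{\prm}_t, \EE_t[b_t]\rangle.
\]
For the first piece, the plan is to invoke the first-order optimality of $\prm_{PS}$ for the partial problem $\min_{\prm \in {\cal X}} f(\prm;\prm_{PS})$, which yields $\langle \grd f(\prm_{PS};\prm_{PS}), \prm_t - \prm_{PS}\rangle \geq 0$, combined with $\mu$-strong convexity of $f(\cdot;\prm_{PS})$ from A\ref{ass:scvx} to get $\langle \grd f(\prm_t;\prm_{PS}) - \grd f(\prm_{PS};\prm_{PS}), \hat{\prm}_t\rangle \geq \mu\|\hat{\prm}_t\|^2$. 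Adding these two inequalities gives $\langle \hat{\prm}_t, \grd f(\prm_t;\prm_{PS})\rangle \geq \mu\|\hat{\prm}_t\|^2$.

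Plugging everything back into the expansion produces the claimed inequality \eqref{lemeq:onedes-pcsgd}. I do not expect any serious obstacle: the proof is a standard contraction argument, and the only subtlety is being careful to keep the cross term with $\EE_t[b_t]$ unresolved (since it will later be bounded using A\ref{assu:w1} and A\ref{assu:bndgrd} in the proof of Theorem~\ref{thm1} to obtain the contraction \eqref{eq:contraction_proof_thm1} together with the residual ${\cal C}_1$ from the clipping mismatch $\max\{G-c,0\}$).
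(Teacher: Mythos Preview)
Your proposal is correct and follows the same skeleton as the paper: apply non-expansiveness of the projection, expand the square, take conditional expectation, and use strong convexity on the cross term. There is, however, a noteworthy technical difference. The paper writes $\prm_{PS} = {\cal P}_{\cal X}\big(\prm_{PS} - \gamma_{t+1}\grd f(\prm_{PS};\prm_{PS})\big)$ (the projected-gradient fixed-point characterization) before invoking non-expansiveness, which inserts $\grd f(\prm_{PS};\prm_{PS})$ into the quadratic term and yields the looser bound $2(c^2+G^2)+d\sigmaDP{2}$ there; this is precisely the constant $c_1$ that then propagates into Theorem~\ref{thm1}. Your route, using simply $\prm_{PS} = {\cal P}_{\cal X}(\prm_{PS})$ and then combining strong monotonicity with the variational inequality $\langle\grd f(\prm_{PS};\prm_{PS}),\prm_t-\prm_{PS}\rangle\geq 0$, keeps the quadratic term as $\EE_t\|\Tgrd g(\prm_t)+\zeta_{t+1}\|^2$ and delivers the sharper $\min\{c^2,G^2\}+d\sigmaDP{2}$, which is exactly what the lemma statement claims. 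Both arguments ultimately rely on A\ref{ass:scvx} (even though the lemma's hypothesis list omits it), so your explicit invocation of strong convexity is appropriate.
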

The proof is in \S\ref{app:onedes-pcsgd}. 
The inner product term above involving $\EE_t[b_t]$ captures the clipping bias and the distribution shift. The latter is the difference between the clipped stochastic gradient and the expected gradient induced by ${\cal D}(\prm_{PS})$.

Such term is unlikely to be small except for a large clipping threshold $c$. For example, \citep[Lemma 9]{zhang2020adaptive} applied an indicator function trick to bound $\| \EE_t [ b_t ] \|$ by $G / c$. We improve their treatment on the bias term via a better use of the smoothness property (cf.~Lemma~\ref{lem:continuity}) to obtain:
\begin{lemma}\label{lem:inner}
Under A\ref{assu:lips}, \ref{assu:bndgrd}, \ref{assu:w1},  the following upper bound holds:
\begin{align}
    &- 2\gamma_{t+1}\Pscal{\prm_t - \prm_{PS}}{ \EE_t [b_t] }\leq 2L \beta \gamma_{t+1}\norm{\prm_{t}-\prm_{PS}}^2  + \tmu \gamma_{t+1}\norm{\prm_t-\prm_{PS}}^2  + \frac{4 \gamma_{t+1}}{\tmu}\cdot  {\cal C}_1, \label{lemeq:inner}
\end{align}
with the constants ${\cal C}_1 \eqdef (\max\{G-c, 0\})^2$, $\tmu \eqdef \mu - L \beta$.
\end{lemma}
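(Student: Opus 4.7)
The plan is to split $\EE_t[b_t]$ into a \emph{distribution-shift} component and a \emph{clipping-bias} component, and then handle each with a different tool. Concretely, since $\grd f(\prm_t;\prm_t) = \EE_t[\grd \ell(\prm_t; Z_{t+1})]$ by definition of $f$, I would write
\begin{equation*}
\EE_t[b_t] \;=\; \underbrace{\bigl(\grd f(\prm_t;\prm_t) - \grd f(\prm_t;\prm_{PS})\bigr)}_{\text{distribution-shift bias}} \;+\; \underbrace{\EE_t\bigl[\clip_c(\grd\ell(\prm_t;Z_{t+1})) - \grd\ell(\prm_t;Z_{t+1})\bigr]}_{\text{clipping bias}}.
\end{equation*}
For the first term, Lemma~\ref{lem:continuity} (invoked with identical first argument $\prm_t$ but differing second arguments $\prm_t$ and $\prm_{PS}$) immediately yields a norm bound of $L\beta\,\|\prm_t-\prm_{PS}\|$. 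For the second, the point is to \emph{not} use the naive $G/c$-style bound but rather exploit the elementary identity $\|\clip_c(g) - g\| = \max\{0,\|g\|-c\}$; combined with $\|\grd\ell(\prm;z)\|\leq G$ from A\ref{assu:bndgrd}, this gives a pointwise bound $\max\{0,G-c\} = \sqrt{{\cal C}_1}$, which survives the conditional expectation via Jensen.

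Having bounded $\|\EE_t[b_t]\| \leq L\beta\|\prm_t-\prm_{PS}\| + \sqrt{{\cal C}_1}$, I would apply Cauchy--Schwarz to the inner product $-2\gamma_{t+1}\pscal{\prm_t-\prm_{PS}}{\EE_t[b_t]}$, producing a $2L\beta\gamma_{t+1}\|\prm_t-\prm_{PS}\|^2$ contribution (matching the first term of the target bound) and a cross term $2\gamma_{t+1}\|\prm_t-\prm_{PS}\|\sqrt{{\cal C}_1}$. I then apply Young's inequality $2ab \leq \lambda a^2 + b^2/\lambda$ with $\lambda = \tmu$ (or a suitable constant multiple) to the cross term; this manufactures exactly a $\tmu\gamma_{t+1}\|\prm_t-\prm_{PS}\|^2$ term together with a residual of order $\gamma_{t+1}{\cal C}_1/\tmu$, absorbing any numeric slack into the factor~$4$ on the right-hand side.

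The only subtle step, and the one I would expect to be the main obstacle, is obtaining the sharp $\max\{0,G-c\}$ scaling of the clipping bias rather than the looser $G/c$ bound used in earlier work (e.g.\ the indicator-function trick in~\cite{zhang2020adaptive}). The key observation is that because clipping leaves gradients of norm $\leq c$ untouched and only shrinks the remaining ones toward the ball of radius $c$, the perturbation $\clip_c(g)-g$ is itself controlled by $\|g\|-c$ on the event $\{\|g\|>c\}$ and vanishes otherwise; under A\ref{assu:bndgrd} this deterministic bound is $\sqrt{{\cal C}_1}$, and no further moment information is needed. Once this is in place, the rest of the argument is just Cauchy--Schwarz plus a single Young's inequality, and both $\tmu$ and ${\cal C}_1$ appear in the stated form.
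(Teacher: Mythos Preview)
Your proposal is correct and follows essentially the same route as the paper: the same two-part decomposition of $\EE_t[b_t]$ into a distribution-shift term (bounded via Lemma~\ref{lem:continuity} by $L\beta\|\prm_t-\prm_{PS}\|$) and a clipping term (bounded pointwise by $\max\{0,G-c\}$ using $\|\clip_c(g)-g\|=\max\{0,\|g\|-c\}$ and A\ref{assu:bndgrd}), followed by Cauchy--Schwarz and then Young's inequality on the cross term. The paper applies Young with the slightly looser form $xy\leq ax^2+y^2/a$ at $a=\tmu/2$, which is what produces the factor~$4$; your tighter $2ab\leq \tmu a^2+b^2/\tmu$ would in fact give $\gamma_{t+1}{\cal C}_1/\tmu$, which of course still implies the stated bound.
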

See \S \ref{app:inner-pcsgd} for the detailed proof.
The first two terms on the right hand side of Eq.~\eqref{lemeq:inner} vanishes as $\| \prm_t - \prm_{PS} \| \to 0$. Meanwhile, the term $\frac{4 \gamma_{t+1} {\cal C}_1}{\tmu}$ has led to an inevitable clipping bias. Notice that this term vanishes only in the trivial case of $c \geq G$, i.e., the clipping threshold is larger than any stochastic gradient. Otherwise, this bias will propagate through the algorithm and lead to an asymptotic bias.

\begin{proof}[\bf Proof of Theorem \ref{thm1}]
Combining Lemmas \ref{lem:onedes}, \ref{lem:inner} leads to the following recursion:
\begin{align}
    \EE_t \normtxt{\hat{\prm}_{t+1} }^2 &\leq   (1- \tmu \gamma_{t+1}) \normtxt{\hat{\prm}_t}^2 + \frac{4 {\cal C}_1}{\tmu} \gamma_{t+1} + \gamma_{t+1}^2 \left( 2c^2 + 2 G^2 +\! d\sigmaDP{2} \right). 
    \label{eq:dd}
\end{align}
Recall that  $c_1\eqdef (2 c^2 + 2 G^2 + d\sigmaDP{2})$.
Taking full expectation on both sides of \eqref{eq:dd} leads to:
\begin{align}\label{eq:contraction_proof_thm1}
    \EE \normtxt{\hat{\prm}_{t+1}}^2 & \leq  \prod_{i=1}^{t+1}(1- \tmu \gamma_{i}) \norm{\hat{\prm}_0}^2 + c_1 \sum_{i=1}^{t+1} \gamma_{i}^2 \prod_{j=i+1}^{t+1} (1- \tmu \gamma_{j} +\frac{4 {\cal C}_1}{\tmu } \sum_{i=1}^{t+1} \gamma_{i} \prod_{j=i+1}^{t+1} (1- \tmu \gamma_{j})  \notag
    \\
    &  \leq \prod_{i=1}^{t+1}(1- \tmu \gamma_{i}) \norm{\hat{\prm}_0 }^2 + \frac{8 {\cal C}_1}{\tmu^2} + \frac{2 c_1}{\tmu} \gamma_{t+1},
\end{align}
where we used Lemma \ref{lem:aux} in the last inequality for step size satisfying $\sup_{t\geq 1} \gamma_{t} \leq \frac{2}{\Tilde{\mu}}$. 
\end{proof}\vspace{-.1cm}

\subsection{Proof of Lemma \ref{lem:onedes}}\label{app:onedes-pcsgd}
\begin{proof}
Recall the notation $\Tgrd g(\prm_{t}) \eqdef \clip_{c} (\grd \ell(\prm_{t}; Z_{t+1}))$ which we introduce in (\ref{eq:convention}). Then, we deduce the following chain
\begin{align*}
     \norm{\prm_{t+1} - \prm_{PS}}^2 &\overset{(a)}{=} \norm{\proj_{\cal X} \left( \prm_t - \gamma_{t+1}\left[ \Tgrd g(\prm_{t}) + \zeta_{t+1}\right]\right) -\proj_{\cal X}\left( \prm_{PS} + \gamma_{t+1} \grd f( \prm_{PS} ; \prm_{PS} ) \right)}^2
     \\
     &\overset{(b)}{\leq}  \norm{\prm_t - \gamma_{t+1}\left[ \Tgrd g(\prm_{t}) + \zeta_{t+1}\right] - \prm_{PS} + \gamma_{t+1} \grd f( \prm_{PS} ; \prm_{PS} ) }^2
    \\
    &= \norm{\prm_t - \prm_{PS}}^2 + \gamma_{t+1}^2 \norm{\Tgrd g(\prm_t) + \zeta_{t+1} - \grd f( \prm_{PS} ; \prm_{PS} ) }^2 \\
    & \quad - 2\gamma_{t+1}\Pscal{\prm_t - \prm_{PS}}{\Tgrd g(\prm_{t}) + \zeta_{t+1} - \grd f( \prm_{PS} ; \prm_{PS} ) },
\end{align*}
where in equality $(a)$, we applied the definition of $\prm_{PS}$ in (\ref{eq:convention}). Inequality $(b)$ is due to the  non-expansive property of the projection operator. Introducing notation $b_t \eqdef \Tgrd g(\prm_t) - \grd f(\prm_t; \prm_{PS})$ which we define in \eqref{eq:convention} into the above inequality gives us
\begin{align*}
    \norm{\prm_{t+1} - \prm_{PS}}^2 &\leq \norm{\prm_t - \prm_{PS}}^2 + \gamma_{t+1}^2 \norm{\Tgrd g(\prm_t) + \zeta_{t+1} - \grd f( \prm_{PS} ; \prm_{PS} ) }^2 \\
    &\quad - 2\gamma_{t+1}\Pscal{\prm_t - \prm_{PS}}{b_t + \grd f(\prm_t; \prm_{PS}) - \grd f( \prm_{PS} ; \prm_{PS} ) }
    \\
    &\overset{(a)}{\leq} (1 - 2 \gamma_{t+1} \mu ) \norm{\prm_t - \prm_{PS}}^2 + \gamma_{t+1}^2 \norm{\Tgrd g(\prm_t) + \zeta_{t+1} - \grd f( \prm_{PS} ; \prm_{PS} ) }^2 
    \\
    &\quad - 2\gamma_{t+1}\Pscal{\prm_t - \prm_{PS}}{b_t},
\end{align*}
where inequality $(a)$ is due to strong convexity of $\ell(\cdot; z)$, i.e., $\Pscal{\prm_t - \prm_{PS}}{ \grd f( \prm_t; \prm_{PS} ) - \grd f(\prm_{PS};\prm_{PS})} \geq \mu \| \prm_t - \prm_{PS} \|^2$.
 Taking conditional expectation with regards to $\prm_t$ on both sides gives us
\beq\label{eq:bb}
\begin{aligned}
    \EE_t \norm{\prm_{t+1} - \prm_{PS}}^2 &\leq(1- 2\mu \gamma_{t+1}) \norm{\prm_t - \prm_{PS}}^2 - 2\gamma_{t+1}\Pscal{\prm_t - \prm_{PS}}{ \EE_t [b_t] } 
    \\
    &\quad + \gamma_{t+1}^2 \EE_t \norm{\Tgrd g(\prm_t) + \zeta_{t+1} - \grd f(\prm_{PS}; \prm_{PS})}^2. 
\end{aligned}
\eeq
For the last term of right-hand-side of the above inequality, we have
\begin{align*}
    &\EE_{t} \norm{\Tgrd g(\prm_t) + \zeta_{t+1} - \grd f(\prm_{PS}; \prm_{PS})}^2
    \\
    &= \EE_{t} \left[ \normtxt{\Tgrd g(\prm_t) - \grd f(\prm_{PS}; \prm_{PS})}^2 + \pscal{\Tgrd g(\prm_t) - \grd f(\prm_{PS}; \prm_{PS})}{\zeta_{t+1}} + \norm{\zeta_{t+1}}^2 \right]
    \\
    &\stackrel{(a)}{=}  \EE_{t} \normtxt{\Tgrd g (\prm_t) - \grd f(\prm_{PS}; \prm_{PS})}^2 + 0 + {d}\sigmaDP{2} 
    \\
    &= \EE_t \normtxt{\clip_{c}(\grd \ell(\prm_t; z_{t+1})) - \grd f(\prm_{PS}; \prm_{PS})}^2 + d\sigmaDP{2}
    \\
    &\leq 2\EE_t \norm{\clip_c(\grd \ell (\prm_t; z_{t+1}))}^2 + 2\norm{\grd f(\prm_{PS}; \prm_{PS})}^2 + d\sigmaDP{2}
    \overset{(b)}{\leq} 2(c^2 + G^2) + d\sigmaDP{2},
\end{align*}
where $(a)$ is obtained since the additive perturbing noise is zero mean and statistically independent of the stochastic gradient, $(b)$ is due to A~\ref{assu:bndgrd} and definition of clipping operator.

\vspace{+.2cm}
\noindent
Substituting above upper bound to (\ref{eq:bb}) leads to
\begin{align*}
    \EE_t \norm{\prm_{t+1} - \prm_{PS}}^2 &\leq  \left(1- 2\mu  \gamma_{t+1} \right) \norm{\prm_t - \prm_{PS}}^2 - 2\gamma_{t+1}\Pscal{\prm_t - \prm_{PS}}{ \EE_t [b_t] }  
    \\
    &\quad + \gamma_{t+1}^2 \left( 2c^2 + 2G^2 + d \sigmaDP{2} \right),
\end{align*}
which finishes the proof of Lemma \ref{lem:onedes}.
\end{proof}

\subsection{Proof of Lemma \ref{lem:inner}}\label{app:inner-pcsgd}
\begin{proof}

Applying the Cauchy-Schwarz inequality on the inner product $\Pscal{ \prm_{PS}-\prm_t}{ \EE_t [b_t] }$ yields the following upper bound:
\[
- 2\gamma_{t+1}\Pscal{\prm_t - \prm_{PS}}{ \EE_t [b_t] } \leq  2\gamma_{t+1}\norm{\prm_t - \prm_{PS}}\cdot \norm{ \EE_t [b_t] }.
\]
We now proceed to upper bounding  $\norm{ \EE_t [b_t] }$:
\begin{align*}
    \norm{\EE_t [b_{t}]}  &= \norm{\EE_t \left[ \Tgrd g(\prm_t) - \grd f(\prm_t; \prm_{PS}) \right] } \\
    &= \norm{\grd g(\prm_t;  \prm_t) - \grd f(\prm_t; \prm_{PS})}
    \\
    &\leq \norm{\grd g(\prm_t;  \prm_t) - \grd f(\prm_t; \prm_{t})} + \norm{ \grd f(\prm_t; \prm_{t}) - \grd f(\prm_t;  \prm_{PS})}
    \\ 
    & \leq \norm{\EE_{z\sim {\cal D}(\prm_t)} \left( \clip_{c}(\grd \ell(\prm_t; z)) - \grd \ell(\prm_t; z) \right)} + L\beta \norm{\prm_t - \prm_{PS}}
\end{align*}
where the last inequality is due to Assumption~\ref{assu:lips} and Lemma D.4 of \citep{perdomo2020performative}. Moreover,
\beq \label{eq:lb_clip_ell}
\begin{aligned}
    & \norm{\EE_{z\sim {\cal D}(\prm_t)}\left[ \min \left(1, \frac{c}{\norm{\grd \ell(\prm_t; z)}}\right) - 1\right]\grd \ell(\prm_t; z)} 
    \\
    & \leq \EE_{z\sim {\cal D}(\prm_t)} \left[ \left| 1 - \min\left( 1, \frac{c}{\norm{\grd \ell(\prm_t; z )}}\right)\right| \cdot \norm{\grd \ell(\prm_t; z)}\right] 
    \\
    & = \EE_{z\sim {\cal D}(\prm_t)} \left[ \max\left( 0, {\norm{\grd \ell(\prm_t; z )} -c }\right) \right] \leq \max\{ G-c, 0\} ,
\end{aligned}
\eeq
where we have used Assumption~\ref{assu:bndgrd} in the last inequality.
Finally, we obtain
\begin{align*}
    - 2\gamma_{t+1}\Pscal{\prm_t - \prm_{PS}}{ \EE_t [b_t] } &\leq   2\gamma_{t+1} \norm{\prm_t - \prm_{PS}} \cdot \left( \max\{ G-c, 0\} + L\beta \norm{\prm_t - \prm_{PS}} \right)
    \\
    &= 2L\beta \gamma_{t+1}\norm{\prm_{t}-\prm_{PS}}^2 + 2\gamma_{t}\norm{\prm_t-\prm_{PS}} \cdot \max\{ G-c, 0\}
    \\
    &\leq 2L\beta\gamma_{t+1}\norm{\prm_{t}-\prm_{PS}}^2 + \tmu \gamma_{t+1}\norm{\prm_t-\prm_{PS}}^2 + \frac{4 \gamma_{t+1}}{\tmu}\cdot (\max\{ G-c, 0\})^2,
\end{align*}
where in the last inequality, we used the H\"older's inequality $xy \leq {ax^2} + \frac{y^2}{a}$ and set $a=\tmu/2.$
\end{proof}

\section{Proof of Theorem \ref{thm:lb}}\label{app:lb}

\begin{proof}
    Let $a,b,\beta > 0$ and denote by ${\cal B}(p)$ the Bernoulli distribution with mean $0<p<\frac{1}{2}.$
    We consider the following quadratic loss function:
    \beq\label{eq:quad}
        &\min_{\prm\in {\cal X}} \EE_{Z \sim {\cal D}(\prm)} [\ell(\prm;Z)],\nonumber\\
        &~~\text{ where}~\ell(\prm;z) = \frac{1}{2} (\prm + az)^2~\text{and}~Z \sim {\cal D}(\prm) \Longleftrightarrow Z = b \tilde{Z} - \beta \prm,~\tilde{Z} \sim {\cal B}(p).
    \eeq
   We require that 
   $ab \geq 2c$. When $Z \sim {\cal D}(\prm)$, we note that the stochastic gradient is in the form:
    \[
        \grd \ell(\prm; Z) = \prm + a( b \tilde{Z} - \beta \prm ) = (1-a \beta) \prm + ab  \tilde{Z}.
    \]
    Problem~\eqref{eq:quad} satisfies {A\ref{assu:lips} with $\mu = 1$, $L=a$ and A\ref{assu:w1} with the parameter $\beta$}. 
    
    Consider the following point:
    \[ 
        \prminf = -\frac{pc}{(1-p)(1-a\beta)}.
    \]
    We claim that $\prminf$ is a fixed point of the clipped SGD algorithm. 
    Note that the stochastic gradient at $\prminf$ with $Z \sim {\cal D}( \prminf )$ when $\tilde{Z}=0$ is not clipped, since
    \[
        | (1- a\beta)\prminf + ab \cdot 0 |  = \left| (1-a\beta)\times \left( -\frac{pc}{(1-p)(1-a\beta)} \right) \right| = \left| - \frac{pc}{1-p} \right| \leq c,
    \]
    where the last inequality is due to the assumption $p<\frac{1}{2}$. Meanwhile, when $\tilde{Z} = 1$, the stochastic gradient is:
    \[
        (1-a\beta) \prminf + ab = -\frac{pc}{1-p} + ab \geq - c + 2c = c,
    \]
    where the last inequality uses the condition $ab \geq 2c$. In particular, we have that:
    \beqq
        \EE_{Z\sim {\cal D}(\prm)} [\clip_c(\grd \ell(\prminf; Z))] = (1-p)\cdot [(1-a\beta)\prm] + pc = 0 .
    \eeqq
    The above shows that $\prminf$ is a fixed point of the Clipped SGD algorithm. 

    On the other hand, the performative stable solution, $\prm_{PS}$, of problem (\ref{eq:quad}) solves the following equation, 
    \beqq
     \EE_{z\sim {\cal D}(\prm_{PS})} [\grd \ell(\prm_{PS}; z)] = \EE_{Z\sim {\cal D}(\prm_{PS})} (\prm_{PS} + a Z) = \prm_{PS} + a (p - \beta\prm_{PS}) = 0,
    \eeqq
    we get that 
    \[
        \prm_{PS} = -\frac{pa}{1-a\beta}.
    \]
    Finally, we can lower bound the asymptotic bias of clipped SGD as
    \begin{align*}
    \norm{\prminf - \prm_{PS}}^2 &= \norm{-\frac{pc}{(1-p)(1-a\beta)} + \frac{pa}{1-a\beta}}^2 = \frac{p^2}{(1-a\beta)^2} \left( a-\frac{c}{1-p} \right)^2 
    = \Omega\left(\frac{1}{(\mu - L\beta)^2}\right),
    \end{align*}
    
    Finally, we aim to prove that $\prminf$ is unique. We remark that the limiting points of clipping SGD satisfies
    \beqq 
        \EE_{z\sim {\cal D}(\prm)} \clip_c (\grd \ell(\prm; z)) = 0,
    \eeqq
    which is equivalent to 
    \beq\label{eq:limiting}
        p \times \clip_c[(1-a\beta)\prm + ab] + (1-p) \times \clip_c[(1-a\beta)\prm] = 0.
    \eeq
    {\bf Case 1}: $(1-a\beta)\prm + ab > c$ and $(1-a\beta)\prm < c$, then, we will get $\prminf$.

    {\bf Case 2}: $(1-a\beta)\prm + ab > c$ and $(1-a\beta)\prm > c$, then
    \[
        p\times c + (1-p) \times c =1 \neq 0.
    \]

    {\bf Case 3}: $(1-a\beta)\prm + ab < c$ and $(1-a\beta)\prm < c$, i.e, it requires that $\prm < \frac{c-ab}{1-a\beta}$, then from (\ref{eq:limiting}), we obtain that 
    \begin{align*}
        & p \times [(1-a\beta)\prm + ab] + (1-p) \times [(1-a\beta)\prm] = 0.
    \end{align*}
    Solve it, we get $\prm  = \frac{pab}{1-a\beta}$. Note that 
    \[
        \prm = \frac{pab}{1-a\beta} \leq \frac{c-ab}{1-a\beta},
    \]
    which is impossible, since $ab\geq 2c$ and $p, a>0$.
    In conclusion, there is only one solution of (\ref{eq:limiting}).
    This concludes the proof.
\end{proof}

\section{Proof of Theorem~\ref{thm:nvcx}} \label{app:noncvx}

In this section, we consider a special case of \eqref{eq:p} where ${\cal X} = \RR^d$. Similar to \S\ref{app:pf_scvx}, we also consider the general setting of {\pcsgd} with DP noise \eqref{algo:pcsgd-dp}.

\begin{theorem*}
    Under A\ref{assu:lips}, \ref{assu:bndgrd}, \ref{assu:var-ncvx}, \ref{assu:tv}, \ref{assu:bd_loss}. Let the step sizes satisfy $\sup_{t\geq 1} \gamma_{t} \leq \frac{1}{2(1+\sigma_1^2)}$. Then, for any $T\geq 1$, the iterates $\{ \prm_{t} \}_{t\geq 0}$ generates by \eqref{algo:pcsgd} satisfy:
    \begin{align}
    \sum_{t=0}^{T-1} \gamma_{t+1} \EE \left[ \norm{\grd f(\prm_t; \prm_t)}^2 \right] \leq 8 \Delta_0 + 4L (\sigma_0^2+\sigmaDP{2}) \sum_{t=0}^{T-1}\gamma_{t+1}^2 + 8 {\sf b}(\beta,c) \sum_{t=0}^{T-1}\gamma_{t+1}, \notag
    \end{align}
    where $\Delta_0 \eqdef \EE[ f(\prm_0; \prm_0) - \ell^\star]$ is an upper bound to the initial optimality gap for performative risk, and
    \beq\notag
    {\sf b}(\beta,c) \eqdef \widehat{L} \beta \left( \sqrt{\sigma_0^2+\sigmaDP{2}} + 8(1+\sigma_1^2)\widehat{L}\beta \right) + 2\max\{G-c, 0\}^2. 
    \eeq
\end{theorem*}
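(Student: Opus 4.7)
The plan is to use $f(\prm_t;\prm_t)$ as a Lyapunov function and derive a one-step descent inequality, then telescope. The central difficulty is that $\prm_t$ appears in both slots of $f$, so standard $L$-smoothness of $\ell$ gives descent in the first slot only. I would split
\[
f(\prm_{t+1};\prm_{t+1}) - f(\prm_t;\prm_t) = \underbrace{f(\prm_{t+1};\prm_{t+1}) - f(\prm_{t+1};\prm_t)}_{\text{distribution shift}} + \underbrace{f(\prm_{t+1};\prm_t) - f(\prm_t;\prm_t)}_{\text{smooth descent}}.
\]
For the second bracket, apply the descent lemma (from $L$-smoothness of $\ell$ in its first argument, which transfers to $f(\cdot;\bar\prm)$): this yields $\langle \nabla f(\prm_t;\prm_t),\prm_{t+1}-\prm_t\rangle + \frac{L}{2}\|\prm_{t+1}-\prm_t\|^2$. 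For the first bracket, use A\ref{assu:tv} together with the uniform bound A\ref{assu:bd_loss} to get $|f(\prm;\prm')-f(\prm;\prm'')|\leq 2\ell_{\max}\beta\|\prm'-\prm''\|$, so this term contributes something of order $\ell_{\max}\beta\|\prm_{t+1}-\prm_t\|$.

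Next I would take the conditional expectation of the linear term. Writing $\prm_{t+1}-\prm_t = -\gamma_{t+1}(\clip_c(\nabla\ell(\prm_t;Z_{t+1}))+\zeta_{t+1})$, the zero-mean DP noise drops out, and I handle the clipping bias exactly as in the proof of Lemma~\ref{lem:inner}: adding and subtracting $\nabla f(\prm_t;\prm_t)$ gives
\[
\langle \nabla f(\prm_t;\prm_t), \EE_t[\clip_c(\nabla\ell(\prm_t;Z_{t+1}))]\rangle \geq \|\nabla f(\prm_t;\prm_t)\|^2 - \|\nabla f(\prm_t;\prm_t)\|\cdot\max\{G-c,0\},
\]
where the bias bound $\max\{G-c,0\}$ comes from the same calculation as in \eqref{eq:lb_clip_ell}. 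A Young's inequality pushes $\|\nabla f\|\cdot\max\{G-c,0\}$ into $\tfrac{1}{4}\|\nabla f\|^2 + \max\{G-c,0\}^2$. For the quadratic $\|\prm_{t+1}-\prm_t\|^2$ term, I would bound $\|\clip_c(\nabla\ell(\prm_t;Z_{t+1}))\|^2$ using A\ref{assu:var-ncvx}: $\EE_t\|\clip_c(\cdot)\|^2 \leq 2(\sigma_0^2+\sigma_1^2\|\nabla f(\prm_t;\prm_t)\|^2) + 2\|\nabla f(\prm_t;\prm_t)\|^2$ plus the DP noise contribution $d\sigma_{DP}^2$.

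The main obstacle is handling the distribution-shift bracket cleanly: it contains $\|\prm_{t+1}-\prm_t\|$ rather than its square, so I would apply $\|\prm_{t+1}-\prm_t\|\leq \gamma_{t+1}(\|\clip_c(\cdot)\|+\|\zeta_{t+1}\|)$ and then Young's inequality to split it into a piece absorbed by $\tfrac{1}{4}\|\nabla f(\prm_t;\prm_t)\|^2$ (using A\ref{assu:var-ncvx}) and a residual proportional to $\ell_{\max}\beta\sqrt{\sigma_0^2+\sigma_{DP}^2}$ plus a term of order $(1+\sigma_1^2)\ell_{\max}^2\beta^2$. These two residuals are precisely what shows up inside ${\sf b}(\beta,c)$. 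The step-size restriction $\gamma_{t+1}\leq 1/(2(1+\sigma_1^2))$ is used here to ensure the coefficient of $\|\nabla f(\prm_t;\prm_t)\|^2$ remains negative after collecting all terms, and the factor $8$ comes from the combined Young constants. Taking total expectation, multiplying by $\gamma_{t+1}$, summing $t=0,\ldots,T-1$, and telescoping the resulting $\EE[f(\prm_t;\prm_t)]$ terms using $f(\prm_T;\prm_T)\geq \ell^\star$ delivers the claimed bound with $\Delta_0=\EE[f(\prm_0;\prm_0)-\ell^\star]$.
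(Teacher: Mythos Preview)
Your proposal is correct and follows essentially the same route as the paper's proof: the same Lyapunov function $f(\prm_t;\prm_t)$, the same two-term split into smooth descent plus distribution shift, the same clipping-bias bound $\max\{G-c,0\}$ via \eqref{eq:lb_clip_ell}, the same use of A\ref{assu:tv}+A\ref{assu:bd_loss} to control $f(\prm_{t+1};\prm_{t+1})-f(\prm_{t+1};\prm_t)$ by $\ell_{\max}\beta\|\prm_{t+1}-\prm_t\|$, and the same Young-inequality absorption of the resulting $\|\grd f\|$ cross term. The only cosmetic differences are that the paper uses $\|\clip_c(g)\|^2\le\|g\|^2$ followed by the exact variance decomposition (giving $(1+\sigma_1^2)\|\grd f\|^2$ rather than your looser $2(1+\sigma_1^2)\|\grd f\|^2$), and that the $\gamma_{t+1}$ factor is already present from the update rule rather than being multiplied in afterward.
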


\begin{proof}[Proof of Theorem~\ref{thm:nvcx} with general $\sigmaDP{2}$]
For fixed $z\in {\sf Z}$, applying A\ref{assu:lips} leads to 
\begin{align*}
    \ell(\prm_{t+1}; z) &\leq \ell(\prm_t; z) + \pscal{\grd \ell(\prm_t; z)}{\prm_{t+1} - \prm_{t}} + \frac{L}{2}\norm{\prm_{t+1} - \prm_{t}}^2
    \\
    &= \ell(\prm_t; z) - \gamma_{t+1} \pscal{\grd \ell(\prm_t; z)}{\clip_{c}\left( \grd \ell(\prm_t; z_{t+1})\right) + \zeta_{t+1}} + \frac{L \gamma_{t+1}^2}{2}\norm{\clip_{c}(\grd \ell(\prm_{t; z_{t+1}})) + \zeta_{t+1}}^2
    \\
    &= \ell(\prm_t; z) - \gamma_{t+1} \pscal{\grd \ell(\prm_t; z)}{ \Tgrd g(\prm_t) + \zeta_{t+1}} + \frac{L \gamma_{t+1}^2}{2}\norm{ \Tgrd g(\prm_t) + \zeta_{t+1}}^2.
\end{align*}
The second line is due to the update rule of \eqref{algo:pcsgd}. In the last line, we recall the notation $\Tgrd g(\prm_t) \eqdef \clip_{c}(\grd \ell(\prm_t; z_{t+1}))$. Taking integration on fixed $z$ with weights given by the p.d.f. of ${\cal D}(\prm_t)$, i.e., $\int(\cdot) p_{\prm_t}(z) \diff z$ on both sides of above inequality yield
\begin{align*}
    f(\prm_{t+1}; \prm_{t}) \leq f(\prm_{t}; \prm_{t}) - \gamma_{t+1} \pscal{\grd f(\prm_t; \prm_{t})}{\Tgrd g(\prm_{t}) + \zeta_{t+1}} + \frac{L \gamma_{t+1}^2}{2} \norm{\Tgrd g(\prm_{t}) + \zeta_{t+1}}^2.
\end{align*}
Taking conditional expectation $\EE_{t}[\cdot]$ on the clipping stochastic gradients leads to
\begin{align}\label{eq:ncvx-1}
    f(\prm_{t+1}; \prm_{t}) \leq f(\prm_{t}; \prm_{t}) - \gamma_{t+1} \pscal{\grd f(\prm_t; \prm_{t})}{\EE_{t} \left[\Tgrd g(\prm_{t})\right] } + \frac{L \gamma_{t+1}^2}{2} \EE_{t}\left[\norm{\Tgrd g(\prm_{t}) + \zeta_{t+1}}^2 \right].
\end{align}
where we use the property that $\zeta_{t+1} \sim {\cal N}(0, \sigmaDP{2}{\bm I})$. For the last term in above, we observe the following chain,
\begin{align*}
    \EE_{t}\left[\norm{\Tgrd g(\prm_{t}) + \zeta_{t+1}}^2 \right] &= \EE_{t} \norm{\Tgrd g(\prm_t)}^2 + 0 + \sigmaDP{2} \overset{(a)}{\leq} \EE_{t} \norm{\grd \ell(\prm_t; z_{t+1})}^2 + \sigmaDP{2}
    \\
    &= \EE_{t} \norm{\grd \ell(\prm_{t}; z_{t+1}) - \grd f(\prm_t; \prm_t)}^2 + \norm{\grd f(\prm_t; \prm_t)}^2 + \sigmaDP{2}
    \\
    &\overset{(b)}{\leq} \sigma_0^2 + \sigmaDP{2} + (1+\sigma_1^2) \norm{\grd f(\prm_t; \prm_t)}^2, 
\end{align*}
where $(a)$ used the definition of clipping operator \eqref{eq:clip_op} and $(b)$ used A\ref{assu:var-ncvx}. Substituting above results to \eqref{eq:ncvx-1} gives us
\begin{align}\label{eq:ncvx-2}
    f(\prm_{t+1}; \prm_{t}) \leq f(\prm_{t}; \prm_{t}) - \gamma_{t+1} \Pscal{\grd f(\prm_t; \prm_{t})}{\EE_{t} \left[\Tgrd g(\prm_{t})\right] } + \frac{L \gamma_{t+1}^2}{2} \left( \sigma_0^2 + \sigmaDP{2} + (1+\sigma_1^2) \norm{\grd f(\prm_t; \prm_t)}^2 \right).
\end{align}
The inner product term can be lower bounded by
\begin{align*}
    \Pscal{\grd f(\prm_t; \prm_{t})}{\EE_{t} \left[\Tgrd g(\prm_{t})\right] } &= \norm{\grd f(\prm_t; \prm_t)}^2 - \pscal{\grd f(\prm_t; \prm_t)}{\grd g(\prm_t) - \grd f(\prm_t; \prm_t)}
    \\
    &\geq \norm{\grd f(\prm_t; \prm_t)}^2 - \norm{\grd f(\prm_t; \prm_t)}\cdot \norm{\grd g(\prm_t) - \grd f(\prm_t; \prm_t)}
    \\
    &\overset{(a)}{\geq} \norm{\grd f(\prm_t; \prm_t)}^2 - \max\{G-c, 0\} \norm{\grd f(\prm_t; \prm_t)}
    \\
    &\geq \frac{1}{2}\norm{\grd f(\prm_t; \prm_t)}^2 - 2\max\{G-c, 0\}^2 ,
\end{align*}
where $(a)$ is due to \eqref{eq:lb_clip_ell} in Lemma \ref{lem:inner} and the last inequality is due to the fact that $xy\leq ax^2 + \frac{y^2}{a}$, for any $a>0.$ Substituting back to \eqref{eq:ncvx-2} gives
\begin{align*}
    f(\prm_{t+1}; \prm_{t}) \leq f(\prm_{t}; \prm_{t}) - \frac{1}{2}\norm{\grd f(\prm_t; \prm_t)}^2 + 2\max\{G-c, 0\}^2  + \frac{L \gamma_{t+1}^2}{2} \left( \sigma_0^2 + \sigmaDP{2} + (1+\sigma_1^2) \norm{\grd f(\prm_t; \prm_t)}^2 \right).
\end{align*}
Rearranging terms,
\begin{align*}
    \left(\frac{1}{2} - \frac{L(1+\sigma_1^2)\gamma_{t+1}}{2} \right) \norm{\grd f(\prm_t; \prm_t)}^2 &\leq f(\prm_{t}; \prm_{t}) - f(\prm_{t+1}; \prm_{t}) + 2\max\{G-c, 0\}^2  + \frac{L (\sigma_0^2 + \sigmaDP{2} )}{2} \gamma_{t+1}^2.
\end{align*}
The step size condition $\sup_{t\geq 1} \gamma_{t} \leq \frac{1}{2(1+\sigma_1^2)}$ implies that $\frac{1}{2} - \frac{L(1+\sigma_1^2)\gamma_{t+1}}{2} \geq \frac{1}{4}$.
\begin{align}\label{eq:ncvx-main}
    \frac{1}{4}\norm{\grd f(\prm_t; \prm_t)}^2 &\leq f(\prm_{t}; \prm_{t}) - f(\prm_{t+1}; \prm_{t}) + 2\max\{G-c, 0\}^2  + \frac{L (\sigma_0^2 + \sigmaDP{2} )}{2} \gamma_{t+1}^2.
\end{align}
Next, taking full expectation on both sides and decomposing the term $f(\prm_{t}; \prm_{t}) - f(\prm_{t+1}; \prm_{t})$ as:

\begin{align}\label{eq:ft-ft1}
    \EE [f(\prm_{t}; \prm_{t}) - f(\prm_{t+1}; \prm_{t})] &= \EE[f(\prm_{t}; \prm_{t}) - f(\prm_{t+1}; \prm_{t+1})] + \EE[f(\prm_{t+1}; \prm_{t+1}) - f(\prm_{t+1}; \prm_{t})]
    \\
    &\leq \EE[f(\prm_{t}; \prm_{t}) - f(\prm_{t+1}; \prm_{t+1})] + \widehat{L} \beta \norm{\prm_{t+1} - \prm_{t}} \notag
    \\
    &\overset{(a)}{\leq} \EE[f(\prm_{t}; \prm_{t}) - f(\prm_{t+1}; \prm_{t+1})] + \widehat{L} \beta \gamma_{t+1} \EE_{t} \norm{\Tgrd g(\prm_t) + \zeta_{t+1} } \notag
    \\
    &\overset{(b)}{\leq} \EE[f(\prm_{t}; \prm_{t}) - f(\prm_{t+1}; \prm_{t+1})] + \widehat{L} \beta \gamma_{t+1} \sqrt{\EE_{t} \norm{\Tgrd g(\prm_t) + \zeta_{t+1}}^2 } \notag
    \\
    &\overset{(c)}{\leq} \EE[f(\prm_{t}; \prm_{t}) - f(\prm_{t+1}; \prm_{t+1})] + \widehat{L} \beta \gamma_{t+1} \left( \sqrt{ \sigma_0^2 + \sigmaDP{2}} + \sqrt{(\sigma_1^2 + 1) } \norm{\grd f(\prm_t; \prm_t)} \right) \notag
    \\
    &\leq \EE[f(\prm_{t}; \prm_{t}) - f(\prm_{t+1}; \prm_{t+1})] \notag
    \\
    &\quad + \widehat{L} \beta \gamma_{t+1} \left( \sqrt{ \sigma_0^2 + \sigmaDP{2}} + (1+\sigma_1^2)\cdot 8 \widehat{L}\beta + \frac{1}{8\widehat{L}\beta}\norm{\grd f(\prm_t; \prm_t)}^2 \right). \notag
\end{align}
Inequality $(a)$ is due to the update rule \eqref{algo:pcsgd} and $(b)$ is implied by $\left[\EE{X} \right]^2 \leq \EE[X^2]$. In $(c)$, we apply A\ref{assu:var-ncvx}. Back to \eqref{eq:ncvx-main}, we have
\begin{align*}
    \frac{1}{8} \gamma_{t+1} \EE \norm{\grd f(\prm_t; \prm_t)}^2 &\leq \EE\left[f(\prm_t; \prm_t) - f(\prm_{t+1}; \prm_{t+1}) \right] + \frac{L (\sigma_0^2 + \sigmaDP{2}) }{2} \gamma_{t+1}^2 + {\sf b}(\beta,c) \cdot \gamma_{t+1},
\end{align*}
where ${\sf b}(\beta, c)$ is a constant depending on distribution shifting strength and clipping threshold, which defined as
\beq\label{def:b(eps)}
    {\sf b}(\beta,c) \eqdef \widehat{L}\beta \left( \sqrt{ \sigma_0^2 + \sigmaDP{2}} + 8(1+\sigma_1^2)\widehat{L}\beta \right) + 2\max\{G-c, 0\}^2. 
\eeq
Taking summation from $t=0,1,\cdots, T-1$ gives us
\begin{align*}
    \frac{1}{8} \sum_{t=0}^{T-1}\gamma_{t+1} \EE \norm{\grd f(\prm_t; \prm_t)}^2 &\leq \Delta_0 + \frac{L (\sigma_0^2 + \sigmaDP{2}) }{2} \sum_{t=0}^{T-1}\gamma_{t+1}^2 + {\sf b}(\beta,c) \cdot \sum_{t=0}^{T-1}\gamma_{t+1}.
\end{align*}
Set $\gamma_{t} = 1/\sqrt{T}$ and divide $\sum_{t=0}^{T-1} \gamma_{t+1}$ on both sides yields the theorem.
\end{proof}

\section{Proof of Corollary \ref{thm:dp}} \label{app:privacy}
\begin{proof}

Let $o$ and $aux$ denote an outcome and an auxiliary input, respectively. 
We wish to prove 
\[
\Pr({\cal M}(aux, {\bm Z}) = o)  \leq e^{\varepsilon} \Pr({\cal M}(aux, {\bm Z}^\prime) = o) + \delta,
\]
where ${\bm Z}$ and ${\bm Z^\prime}$ are two adjacent datasets, i.e., they are only different by one sample.
Let us define the privacy loss of an outcome $o$ on two datasets ${\bm Z}$ and ${\bm Z}^\prime$ as 
\[
    c(o; {\cal M}, aux, {\bm Z}, {\bm Z}^\prime) \eqdef \log \frac{\Pr({\cal M}(aux, {\bm Z}) = o)}{\Pr({\cal M}(aux, {\bm Z}^\prime) = o)}
\]
and its log moment generating function as 
\[
    \alpha^{\cal M}(\lambda; aux, {\bm Z}, {\bm Z}^\prime) \eqdef \log \EE_{o \sim {\cal M}(aux, {\bm Z})} [\exp(\lambda \cdot c(o; {\cal M}, aux, {\bm Z}, {\bm Z}^\prime) )],
\]
where ${\cal M}$ is the mechanism we focus on, $\lambda >0$ is the variable of log moment generating function. Taking maximum over conditions, the unconditioned log moment generating function is 
\[
    \hat{\alpha}^{\cal M}(\lambda) \eqdef \max_{aux, {\bm Z}, {\bm Z}^\prime} \alpha^{\cal M}(\lambda; aux, {\bm Z}, {\bm Z}^\prime).
\]
The overall log moment generating function can be bounded as following according to composability in \citep[Theorem 2]{abadi2016deep}
\[
    \hat{\alpha}^{\cal M}(\lambda) \leq \sum_{t=1}^{T} \hat{\alpha}^{{\cal M}^{(t)}}(\lambda).
\]
Let 
{$q=\frac{1}{m}$} denotes the probability each data sample is drawn {uniformly} from the dataset ${\sf D}_0$. 
{Recall \eqref{eq:shift_data}, which leads to $\bar{z}_i = Z-s_i(\theta)$ where $i$ is chosen uniformly from $[m]$ and $s_i(\theta)$ is generated according to the random mapping $\mathcal{S}_i: {\cal X} \rightarrow {\sf Z}$. We can define this relationship by the random mapping $f: ({\sf Z}+{\sf D}_0)\rightarrow {\sf D}_0$, where for $X\subseteq \mathbb{R}^d$ and $Y\subseteq \mathbb{R}^d$ we let $X+Y$ denote the set with all the variables $\{x+y:x\in X,\: y\in Y\}$ and the randomness follows from the uniform distribution of $i\in[m]$ and the random mapping $\mathcal{S}_i: {\cal X} \rightarrow {\sf Z}$. By \citep[Proposition 2.1]{DF_algoithmic_foundation} using the post processing function $f$, after the privacy mechanism $\mathcal{M}$, i.e. $f\circ \mathcal{M}$,  does not compromise  differential privacy.}

{Observe that the distribution shift due to ${\cal S}_i(\prm)$ in \eqref{eq:shift_data} cannot compromise differential privacy \citep[Proposition 2.1]{DF_algoithmic_foundation}.} Let $q \leq \frac{c}{16 \sigmaDP{}}$ and $\lambda \leq \frac{\sigmaDP{2}}{c^2} \log \frac{c}{ q  \sigmaDP{}}$, we apply \citep[Lemma 3]{abadi2016deep} to bound the unconditioned log moment generating function as 
\[
 \hat{\alpha}^{{\cal M}^{(t)}}(\lambda) \leq \frac{q^2 \lambda(1+\lambda) c^2}{(1-q)\sigmaDP{2}} + {\cal O}\left( \frac{q^3 \lambda^3c^3 }{\sigmaDP{3}}\right) = {\cal O}\left( \frac{q^2 \lambda^2 c^2}{\sigmaDP{2}}\right).
\]

Finally, we only need to verify that there exists some $\lambda$ that satisfies the following inequalities
\begin{align*}
    & {T} \left(\frac{q c \lambda}{\sigmaDP{}}\right)^2 \leq \frac{\lambda \varepsilon}{2}, \quad 
    \exp(-\lambda \varepsilon /2 ) \leq \delta,
    \quad 
     \lambda \leq \frac{\sigmaDP{2}}{c^2} \log \left(\frac{c}{q \sigmaDP{}}\right).
\end{align*}
We can verify that when $\varepsilon=c_1 q^2 T$, $q = 1/m$ and $\sigmaDP{} = \frac{cq\sqrt{T \log(1/\delta)}}{\varepsilon}$, all above conditions can be satisfied for some explicit constant $c_1$. This finishes the proof.
\end{proof}

\section{Proof of Theorem \ref{thm:dicesgd}} \label{app:dice}

For the convenience of derivations, we introduce the following notations: $\Tprm_t = \prm_t - \gamma_t e_t$. Its update rule is 
\beq \label{eq:update}
    \Tprm_{t+1} &=& \prm_{t+1} - \gamma_{t+1} e_{t+1} 
     \notag 
    \\
    &=& \prm_t - \gamma_{t+1}(v_{t+1} + \zeta_t) - \gamma_{t} \left( e_t + \grd \ell(\prm_t; z_t) - v_{t+1} \right)
    \\
    &=& \prm_t - \gamma_{t+1}\zeta_{t+1} - \gamma_{t+1} \left( e_t + \grd \ell(\prm_t; z_{t+1})\right)
    \notag 
    \\
    &=& \Tprm_t + \left( \gamma_{t} - \gamma_{t+1}\right) e_t - \gamma_{t+1} \left( \zeta_{t+1}  + \grd \ell(\prm_t; z_{t+1}) \right).
    \notag
\eeq
We remark that if we choose a constant step size, i.e, $\gamma_{t} \equiv \gamma$, then $\Tprm_{t+1} = \Tprm_t - \gamma \left( \zeta_{t+1}  + \grd \ell(\prm_t; z_{t+1}) \right)$, which degenerates to the case in the proof of \cite{zhang2023differentially}.

Meanwhile, we can obtain the feedback error $e_t$'s update rule:
\beq\label{eq:er_update}
    e_{t+1} &=& e_t + \grd \ell(\prm_t; z_{t+1}) - \clip_{C_2}(e_t) - \clip_{C_1}(\grd \ell(\prm_t; z_{t+1}))
    \\
    &=& (1-\alpha_e^t) e_{t} + (1-\alpha^t) \grd \ell(\prm_t; z_{t+1}), \notag
\eeq
where we define $\alpha_e^t$, $\alpha^t$ as following:
\beq\label{def:alpha_et&alpha_t}
   \fet{t} \eqdef \min\left\{1, \frac{C_1}{\norm{e_t}} \right\}, \quad \alpha^t \eqdef \min\left\{ 1, \frac{C_2}{\norm{\grd \ell(\prm_t; z_{t+1})}}\right\}.
\eeq
We aim to analyze the squared distance between $\Tprm_{t+1}$ and $\prm_{PS}$. 
\begin{align*}
    \norm{\Tprm_{t+1} - \prm_{PS}}^2 &= 
    \norm{\Tprm_t + (\gamma_t -\gamma_{t+1})e_t - \gamma_{t+1} \left(\grd \ell(\prm_t; z_{t+1})+ \zeta_{t+1} \right) - \prm_{PS}}^2
    \\
    &= \norm{\Tprm_t - \prm_{PS}}^2 + \norm{(\gamma_t - \gamma_{t+1}) e_t -\gamma_{t+1} [\grd \ell(\prm_t; z_{t+1}) + \zeta_{t+1}]}^2 
    \\
    &\quad + 2\Pscal{\Tprm_t-\prm_{PS}}{(\gamma_t - \gamma_{t+1}) e_t -\gamma_{t+1} [\grd \ell(\prm_t; z_{t+1}) + \zeta_{t+1}]}.
\end{align*}
Tacking conditional expectation $\EE_t[\cdot]$ on both sides leads to
\begin{align}\label{eq:ee}
    \EE_t \norm{\Tprm_{t+1} - \prm_{PS}}^2 & \leq \norm{\Tprm_t - \prm_{PS}}^2 + 2\norm{(\gamma_t - \gamma_{t+1}) e_t}^2
    \\
    & \quad + 2\gamma_{t+1}^2 \left[ \EE_t \norm{\grd \ell(\prm_t; z_{t+1})}^2 + d\sigmaDP{2} \right] + {2\Pscal{\Tprm_t - \prm_{PS}}{(\gamma_t - \gamma_{t+1}) e_t}}  \notag
    \\
    &\quad - 2\gamma_{t+1} \Pscal{\Tprm_t - \prm_{PS}}{\grd f(\prm_t; \prm_t) - \grd f(\prm_{PS}, \prm_{PS}) },
    \notag
\end{align}
where we use inequality $(a+b)^2 \leq 2a^2 + 2b^2$,  $\zeta_{t+1}\sim {\cal N}(0, \sigmaDP{2} {\bf I})$ and the fact that $\grd f(\prm_{PS}, \prm_{PS})=0.$

\blue{Applying \eqref{eq:gb_bound} leads to following upper bound: 
\begin{align}
    \EE_t \normtxt{\Tprm_{t+1} - \prm_{PS}}^2 & \leq (1 + 2 \gamma_{t+1}^2 B^2 ) \norm{\Tprm_t - \prm_{PS}}^2 + 2(\gamma_t - \gamma_{t+1})^2 \norm{ e_t}^2 
    \\
    &\quad + 2\gamma_{t+1}^2 \left[ G^2 + d\sigmaDP{2} \right] + \underbrace{2\Pscal{\Tprm_t - \prm_{PS}}{(\gamma_t - \gamma_{t+1}) e_t}}_{\eqdef B_1} \notag
    \\
    &\quad - 2\gamma_{t+1} \underbrace{\Pscal{\Tprm_t - \prm_{PS}}{\grd f(\prm_t; \prm_t) - \grd f(\prm_{PS}, \prm_{PS}) }}_{\eqdef B_2}. \notag
\end{align}
}
Now, let's consider the term $B_1$ first. Using Holder's inequality, we have
\begin{align*}
    B_1 \leq 4 (\gamma_t - \gamma_{t+1})
    \left[ \norm{\Tprm_t - \prm_{PS}}^2 + \norm{e_t}^2 \right].
\end{align*}
For the term $B_2$, we observe the following chain,
\begin{align*}
    B_2 &= \Pscal{\Tprm_t - \prm_{PS}}{\grd f(\prm_t; \prm_t) - \grd f(\Tprm_t; \prm_{PS}) + \grd f(\Tprm_t; \prm_{PS}) - \grd f(\prm_{PS}; \prm_{PS})}
    \\
    &\overset{(a)}{\geq} \mu \norm{\Tprm_t - \prm_{PS}}^2 + \Pscal{\Tprm_t - \prm_{PS}}{\grd f(\prm_t; \prm_{t}) - \grd f(\Tprm_t; \prm_{PS})}
    \\
    &= \mu \norm{\Tprm_t - \prm_{PS}}^2 - \Pscal{\prm_{PS} - \Tprm_t}{\grd f(\prm_t; \prm_t) - \grd f(\Tprm_t; \Tprm_t)} - \Pscal{ \prm_{PS} - \Tprm_t}{ \grd f(\Tprm_t; \Tprm_t)- \grd f(\Tprm_t; \prm_{PS})}
    \\
    &\overset{(b)}{\geq} \mu \norm{\Tprm_t - \prm_{PS}}^2 - \norm{\Tprm_t - \prm_{PS}} \cdot L(1+\beta) \norm{\prm_t - \Tprm_t} -  L\beta \norm{\Tprm_t - \prm_{PS}}^2
    \\
    &\overset{(c)}{=} \tmu \norm{\Tprm_t - \prm_{PS}}^2 - L(1+\beta ) \norm{\Tprm_t - \prm_{PS}} \cdot  \norm{\gamma_t e_t}
    \\
    &\geq \tmu \norm{\Tprm_t - \prm_{PS}}^2 - L(1+\beta) \cdot \left( \frac{\tmu}{2L(1+\beta)}  \norm{\Tprm_t - \prm_{PS}}^2 + \frac{2L(1+\beta)}{\tmu} \gamma_t^2 \norm{e_t}^2 \right)
    \\
    &= \frac{\tmu}{2} \norm{\Tprm_t - \prm_{PS}}^2 - \frac{2L^2 (1+\beta)^2}{\tmu} \gamma_t^2 \norm{e_t}^2 ,
\end{align*}
\noindent where in inequality $(a)$, we have used A\ref{assu:lips},  in inequality (b), we have used A\ref{assu:w1}
and Lemma D.4 of \citep{perdomo2020performative}, and in equality $(c)$, we use the definition of $\Tprm_t = \prm_t - \gamma_t e_t$.

\blue{
Substituting above upper bound for $B_1$ and $B_2$ to (\ref{eq:ee}) gives us
\begin{equation}
\begin{aligned}
    \EE_t \norm{\Tprm_{t+1} - \prm_{PS}}^2 &\leq \left( 1 - \tmu \gamma_{t+1} + \blue{2} B^2 \gamma_{t+1}^2 + 4(\gamma_t - \gamma_{t+1})\right)\norm{\Tprm_t - \prm_{PS}}^2 + 2(G^2+d\sigmaDP{2}) \gamma_{t+1}^2 
    \\
    &\quad + \left( 6(\gamma_t - \gamma_{t+1})^2 + \frac{4L^2(1+\beta)^2}{\tmu}\gamma_t^2 \gamma_{t+1} \right)\norm{e_t}^2.
\end{aligned}
\end{equation}
Setting $\gamma_{t+1} \leq \tmu / \blue{(4B^2)}$ ensures that
\begin{equation}\label{eq:gg}
\begin{aligned}
    \EE_t \norm{\Tprm_{t+1} - \prm_{PS}}^2 &\leq \left( 1 - (\tmu/2) \gamma_{t+1} + 4(\gamma_t - \gamma_{t+1})\right)\norm{\Tprm_t - \prm_{PS}}^2 + 2(G^2+d\sigmaDP{2}) \gamma_{t+1}^2 
    \\
    &\quad + \left( 6(\gamma_t - \gamma_{t+1})^2 + \frac{4L^2(1+\beta)^2}{\tmu}\gamma_t^2 \gamma_{t+1} \right)\norm{e_t}^2.
\end{aligned}
\end{equation}}

If $\gamma_{t}\equiv \gamma$ is constant step size, then $\gamma_t - \gamma_{t+1} = 0$.
For diminishing step size case $\gamma_t = \frac{a_0}{a_1 + t}$, if $a_0\geq \frac{1}{b}$, $b>0$, we can prove that $\gamma_t - \gamma_{t+1} \leq b \gamma_{t+1}^2$. Therefore, (\ref{eq:gg}) becomes
\begin{align*}
    \EE_t \norm{\Tprm_{t+1} - \prm_{PS}}^2 &\leq \left( 1 - \blue{(\tmu/2)} \gamma_{t+1} + 4 b \gamma_{t+1}^2\right)\norm{\Tprm_t - \prm_{PS}}^2 + 2(G^2+d\sigmaDP{2}) \gamma_{t+1}^2  \\
    &\quad + \left( 6 b^2 \gamma_{t+1}^4 \frac{4L^2(1+\beta)^2}{\tmu}\gamma_t^2 \gamma_{t+1} \right)\norm{e_t}^2.
\end{align*}
If $\sup_{t\geq 1} \gamma_{t} \leq \frac{\tmu}{16 b}$
then we have $1 - (\tmu/2) \gamma_{t+1} + 4 b \gamma_{t+1}^2 \leq 1 - \blue{(\tmu/4)} \gamma_{t+1} $. Thus,
\begin{align*}
    \EE_t \norm{\Tprm_{t+1} - \prm_{PS}}^2 &\leq \left( 1 - (\tmu/\blue{4}) \gamma_{t+1} \right)\norm{\Tprm_t - \prm_{PS}}^2 + 2(G^2+d\sigmaDP{2}) \gamma_{t+1}^2 
    \\
    &\quad + \left( 6 b^2 \gamma_{t+1}^4 + \frac{4L^2(1+\beta)^2}{\tmu}\gamma_t^2 \gamma_{t+1} \right)\norm{e_t}^2.
\end{align*}
Taking full expectation on both sides and applying A\ref{assu:et} leads to
\begin{align*}
    \EE \norm{\Tprm_{t+1} - \prm_{PS}}^2 &\leq \left( 1 - \blue{(\tmu/4)} \gamma_{t+1} \right)\norm{\Tprm_t - \prm_{PS}}^2 + 2(G^2+d\sigmaDP{2}) \gamma_{t+1}^2 +  6 b^2 M^2 \gamma_{t+1}^4 
    \\
    &\quad + \frac{4L^2M^2(1+\beta)^2}{\tmu}\gamma_t^2 \gamma_{t+1}. 
\end{align*}
Suppose that $\sup_{t\geq 1} \gamma_t^2/\gamma_{t+1}^2 \leq 1+ \bar{b} \gamma_{t+1}^2$, then we have
\begin{align*}
    \EE \norm{\Tprm_{t+1} - \prm_{PS}}^2 &\leq \left( 1 - \blue{(\tmu/4)} \gamma_{t+1} \right)\norm{\Tprm_t - \prm_{PS}}^2 + 2(G^2+d\sigmaDP{2}) \gamma_{t+1}^2 + \frac{4L^2M^2(1+\beta)^2}{\tmu} \gamma_{t+1}^3 
    \\
    &\qquad
    +  6 b^2 M^2 \gamma_{t+1}^4  + \frac{4L^2M^2 \bar{b}(1+\beta)^2}{\tmu}\gamma_{t+1}^5.
\end{align*}
Solving the above recursion leads to 
\begin{align}\label{eq:contraction_eq_thm_dicesgd}
    \EE \norm{\Tprm_{t+1} - \prm_{PS}}^2 &\leq \prod_{i=1}^{t+1}\left( 1 -\blue{(\tmu/4)} \gamma_{i} \right)\norm{\Tprm_0 - \prm_{PS}}^2 + 2(G^2+d\sigmaDP{2}) \sum_{i=1}^{t+1} \gamma_{i}^2 \prod_{j=i+1}^{t+1}(1-\frac{\tmu}{\blue{4}}\gamma_i)  
    \nonumber\\
    &\qquad+ \frac{4L^2M^2(1+\beta)^2}{\tmu} \sum_{i=1}^{t+1} \gamma_i^3 \prod_{j=i+1}^{t+1}(1-\frac{\tmu}{\blue{4}} \gamma_i)  + 6 b^2 M^2 \sum_{i=1}^{t+1} \gamma_i^4 \prod_{j=i+1}^{t+1}(1-\frac{\tmu}{\blue{4}} \gamma_i) \nonumber\\
    &\qquad+ \frac{4L^2M^2 \bar{b}(1+\beta)^2}{\tmu} \sum_{i=1}^{t+1} \gamma_i^5 \prod_{j=i+1}^{t+1}(1-\frac{\tmu}{\blue{4}} \gamma_i)
    \nonumber\\
    &\leq \prod_{i=1}^{t+1}\left( 1 - \blue{(\tmu/4)} \gamma_{i} \right)\norm{\Tprm_0 - \prm_{PS}}^2 + \frac{\blue{8}(G^2 + d\sigmaDP{2})}{\tmu}\gamma_{t+1} + \frac{\blue{16}L^2M^2(1+\beta)^2}{\tmu^2}\gamma_{t+1}^2 \nonumber\\
    &\qquad+ \frac{\blue{24} b^2 M^2}{\tmu} \gamma_{t+1}^3 + \frac{\blue{16}L^2M^2 \bar{b}(1+\beta)^2}{\tmu^2} \gamma_{t+1}^4,
\end{align}
where the last inequality follows from Lemma \ref{lem:aux}.
Thus, we can now conclude the proof.

\section{Proof of Theorem~\ref{thm:ncvx-dice}} \label{app:dicesgd-ncvx}

\begin{manualtheoremT}{\ref{thm:ncvx-dice} (formal)}
    Under A\ref{assu:lips}, \ref{assu:var-ncvx}, \ref{assu:w1}, \ref{assu:bd_loss}, \ref{assu:et}. Let $\gamma$ denotes the constant step size. 
    If it holds that $\gamma \leq 1 / (2 L (1+\sigma_1^2))$, then the iterates generated by {\dicesgd} admits the following bound for any $T \geq 1$,
    \begin{align}
        \min_{t=0,...,T-1} \EE\left[ \| \grd f(\prm_{t}; \prm_{t}) \|^2 \right] &\leq \frac{4 \Delta_0 }{T\gamma} + {\sf b} \beta + {2 L \gamma} \left[ \sigmaDP{2} + \sigma_0^2 \right] +  2 L^2 M^2 \gamma^2,
    \end{align}
    where  $\Delta_0 \eqdef \EE \left[ f(\prm_0 - \gamma e_0; \prm_0) - \ell^\star \right]$ is an upper bound for the initial error and the constant ${\sf b}$ is defined as 
    \begin{align*}
        {\sf b} &\eqdef 4 \ell_{\max} \left( C_1 +C_2 + \sqrt{d} \sigmaDP{} \right).
    \end{align*}
\end{manualtheoremT}
In particular, for sufficiently large $T$, setting $\gamma = 1 / \sqrt{T}$ yields the bound in \eqref{eq:dicesgd_noncvx} of the main paper.

\begin{proof}
\footnote{We notice that our proof differs from that of \citep[Theorem 3.6]{zhang2023differentially} and is simpler than the latter due to the addition of A\ref{assu:et}.}Consider the {\dicesgd} algorithm with constant step size $\gamma_{t+1} = \gamma$. Similar to \S\ref{app:dice}, we define $\Tprm_t = \prm_t - \gamma e_t$ and notice the following recursion:
\[
\Tprm_{t+1} = \Tprm_t - \gamma \left( \zeta_{t+1} + \grd \ell(\prm_t ; Z_{t+1} ) \right)
\]
To facilitate the derivations, we define the shorthand notation: $\grd f_{t} \eqdef \grd f(\prm_{t}; \prm_{t})$. 
For any fixed sample $z\in {\sf Z}$, applying A\ref{assu:lips} leads to the following upper bound:
\begin{align*}
    \ell(\Tprm_{t+1}, z) &\leq \ell(\Tprm_t, z) + \pscal{\grd \ell(\Tprm_t, z)}{\Tprm_{t+1} - \Tprm_{t}} + \frac{L}{2} \norm{\Tprm_{t+1} - \Tprm_{t}}^2 
\end{align*}
Taking integration on the fixed $z$ with weights given by the p.d.f. of ${\cal D}(\prm_{t})$ on the both sides of above inequality yields
\begin{align*}
    f( \Tprm_{t+1}, \prm_t ) &\leq f(\Tprm_t, \prm_t) + \pscal{\grd f (\Tprm_t, \prm_t) }{\Tprm_{t+1} - \Tprm_{t}} + \frac{L}{2} \norm{\Tprm_{t+1} - \Tprm_{t}}^2 \\
    & = f(\Tprm_t, \prm_t) - \gamma \pscal{\grd f (\Tprm_t, \prm_t) }{ \zeta_{t+1} +\grd \ell( \prm_t; Z_{t+1} ) } + \frac{L \gamma^2}{2} \norm{ \zeta_{t+1} +\grd \ell( \prm_t; Z_{t+1} ) }^2
\end{align*}
Taking the conditional expectation $\EE_t [\cdot]$ yields
\begin{align} \label{eq:dicesgd_recur}
    \EE_t [ f( \Tprm_{t+1}, \prm_t ) ] & \leq f(\Tprm_t, \prm_t) - \gamma \pscal{\grd f (\Tprm_t, \prm_t) }{ \grd f( \prm_t; \prm_t ) } + \frac{L \gamma^2}{2} \EE_t \left[ \norm{ \zeta_{t+1} +\grd \ell( \prm_t; Z_{t+1} ) }^2 \right]
\end{align}
Notice that 
\begin{align*}
\EE_t \left[ \norm{ \zeta_{t+1} +\grd \ell( \prm_t; Z_{t+1} ) }^2 \right] & \leq \sigmaDP{2} + \sigma_0^2 + (1 +\sigma_1^2) \| \grd f( \prm_t; \prm_t ) \|^2
\end{align*}
and under A\ref{assu:et}, it holds that 
\begin{align*}
    - \EE \pscal{\grd f (\Tprm_t, \prm_t) }{ \grd f( \prm_t; \prm_t ) }
    & = - \EE \| \grd f_t \|^2 + \EE \pscal{\grd f (\Tprm_t, \prm_t) - \grd f(\prm_t;\prm_t) }{ \grd f(\prm_t;\prm_t) } \\
    & \leq - \frac{1}{2} \EE \| \grd f_t \|^2 + \frac{1}{2} \EE \| \grd f (\Tprm_t, \prm_t) - \grd f(\prm_t;\prm_t) \|^2 \\
    & \overset{(a)}{\leq} - \frac{1}{2} \EE \| \grd f_t \|^2 + \frac{L^2}{2} \EE \| \Tprm_t - \prm_t \|^2 \\
    & \leq - \frac{1}{2} \EE \| \grd f_t \|^2 +\frac{L^2}{2} \EE \| \gamma e_t \|^2 \leq - \frac{1}{2} \EE \| \grd f_t \|^2 +\frac{L^2 M^2 \gamma^2 }{2} 
\end{align*}
where $(a)$ is due to Lemma \ref{lem:continuity}.
Substituting back into \eqref{eq:dicesgd_recur} and taking full expectation lead to 
\begin{align*} 
    \EE [ f( \Tprm_{t+1}, \prm_t ) ] & \leq \EE[ f(\Tprm_t, \prm_t) ] -  \frac{\gamma}{2} \left( 1 - L \gamma (1 +\sigma_1^2) \right) \EE[ \| \grd f_t \|^2 ] + \frac{L \gamma^2}{2} \left[ \sigmaDP{2} + \sigma_0^2 \right] + \frac{ L^2 M^2 \gamma^3 }{ 2 } \\
    & \leq \EE[ f(\Tprm_t, \prm_t) ] -  \frac{\gamma}{4} \EE[ \| \grd f_t \|^2 ] + \frac{L \gamma^2}{2} \left[ \sigmaDP{2} + \sigma_0^2 \right] + \frac{ L^2 M^2 \gamma^3 }{ 2 }
\end{align*}
where the last inequality is due to $\gamma \leq 1 / (2 L (1+\sigma_1^2))$.
Recall that we have bounded a similar term in \eqref{eq:ft-ft1}:
\begin{align*}
     \EE\left[ f(\Tprm_{t+1}; \prm_t) - f(\Tprm_{t+1}; \prm_{t+1}) \right] \leq \ell_{\max} \beta \norm{\prm_{t} - \prm_{t+1}} \leq  \gamma \ell_{\max} \beta \left( (C_1 +C_2) + \sqrt{d} \sigmaDP{} \right),
\end{align*}
under A\ref{assu:tv} \& \ref{assu:bd_loss}.
This yields 
\begin{align*}
    \frac{\gamma}{4} \EE[ \|\grd f_t\|^2 ] \leq \EE[ f(\Tprm_t, \prm_t) - f( \Tprm_{t+1}, \prm_{t+1} ) ] + \gamma \ell_{\max} \beta \left( C_1 +C_2 + \sqrt{d} \sigmaDP{} \right) + \frac{L \gamma^2}{2} \left[ \sigmaDP{2} + \sigma_0^2 \right] + \frac{ L^2 M^2 \gamma^3 }{ 2 }
\end{align*}
Summing up the inequality from $t=0$ to $t=T-1$ and dividing by $T \gamma / 4$ yields
\begin{align*}
    \frac{1}{T} \sum_{t=0}^{T-1} \EE[ \|\grd f_t\|^2 ] \leq \frac{4}{T\gamma} \left[ f(\Tprm_0, \prm_0) - \ell^\star \right] + 4 \ell_{\max} \beta \left( C_1 +C_2 + \sqrt{d} \sigmaDP{} \right) + {2 L \gamma} \left[ \sigmaDP{2} + \sigma_0^2 \right] +  2 L^2 M^2 \gamma^2 
\end{align*}
For sufficiently large $T$, setting $\gamma = 1 / \sqrt{T}$ yields the bound in the desired theorem.
\end{proof}

\section{Details of Numerical Experiments}\label{app:addexp}

This section provides additional details for the numerical experiments in \S \ref{sec:experiment}. 

\subsection{Validating A\ref{assu:et} Empirically}
\begin{figure}[htbp]
    \centering\includegraphics[width=.34\textwidth]{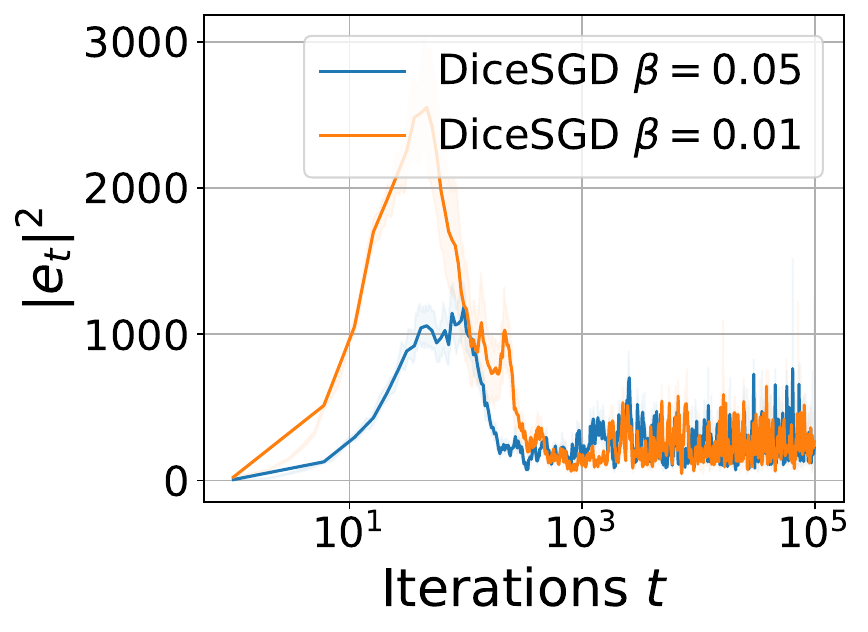}
    \includegraphics[width=.33\textwidth]{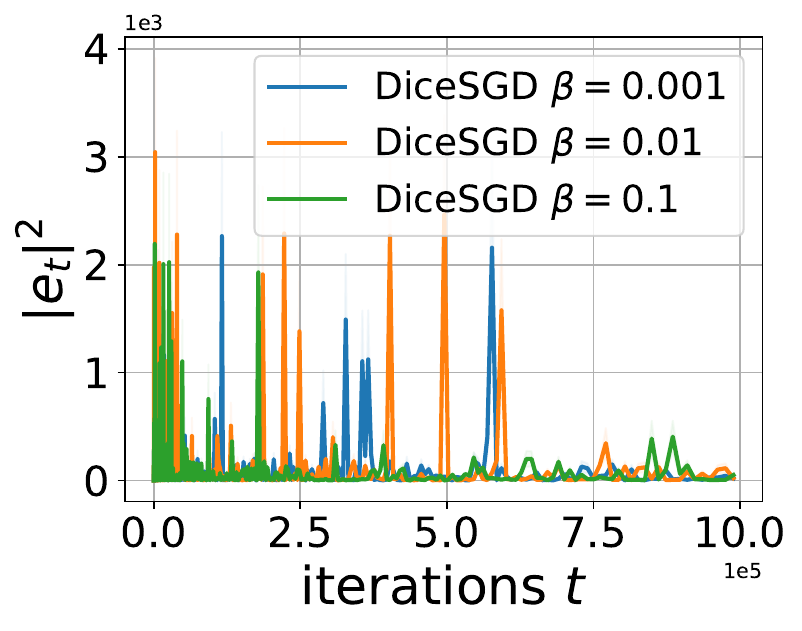}
\vspace{-.2cm}
\caption{Behavior of $e_{t}$ with {\dicesgd} for quadratic minimization (\emph{left}) and logistic regression (\emph{right}).}
    \label{fig:et}
\end{figure}
We plot the average of $ \norm{e_t}^2 $ for {\dicesgd} as a function of $t$ in Fig.~\ref{fig:et}, and observe that it is always bounded for both of our numerical examples, thus verifying our A\ref{assu:et}.

\subsection{Additional Details of Numerical Experiments}

We provide additional details for the numerical experiments on the logistics regression problem. 

In Fig.~\ref{fig:appendix-simu} (Left), we plot the performative risk $\EE_{z\sim {\cal D}(\prm_t)}[\ell(\prm_t; z)] $ as a function of the number of iterations $t$. We observe that {\dicesgd} can achieve lower performative risk at a faster rate compared to {\pcsgd} algorithm. Furthermore, increasing sensitivity $\beta \uparrow$ leads to lower train true positive rate, which is compatible with our Theorems \ref{thm1} and \ref{thm:dicesgd}.

In the training set, records with positive labels $(y=1)$ account for 6.624\%, while negative samples $(y=0)$ comprise 93.37\%. To evaluate model performance, we have presented the test true negative/positive rate in Fig.~\ref{fig2} (second \& third). The training true negative/positive rate is shown in Fig.~\ref{fig:appendix-simu} \emph{(Middle\& Right)}. For self-completeness, we recall the definition positive (negative) label accuracy as follows: 
\[
\text{True Positive (Negative) rate} = \frac{\mathrm{TP}_{\text{pos/neg}}}{\mathrm{TP}_{\text{pos/neg}} + \mathrm{FN}_{\text{pos/neg}}}
\]
where $\mathrm{TP}_{\text{pos/neg}}$ denotes the number of samples with positive (negative) labels correctly classified as positive (negative), whereas $\mathrm{FN}_{\text{pos/neg}}$ represents the number of samples with positive (negative) labels incorrectly classified as negative (positive). 

\begin{figure*}[htbp]
    \includegraphics[width=0.33\linewidth]{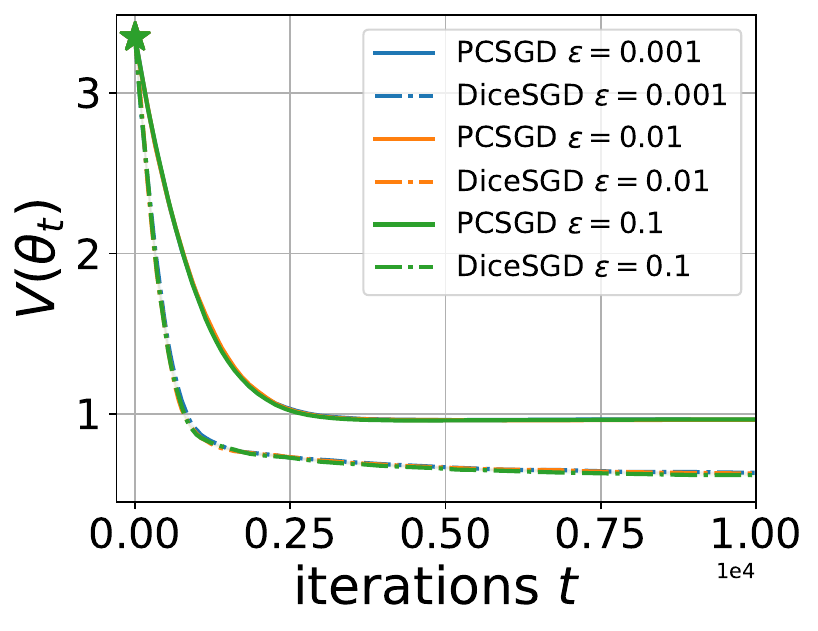}
    \includegraphics[width=.33\textwidth]{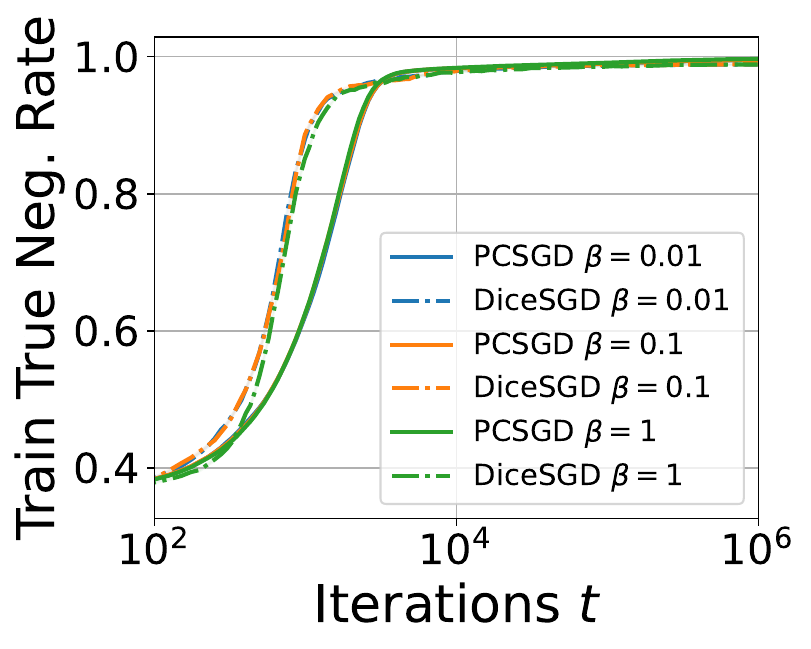}
    \includegraphics[width=.33\textwidth]{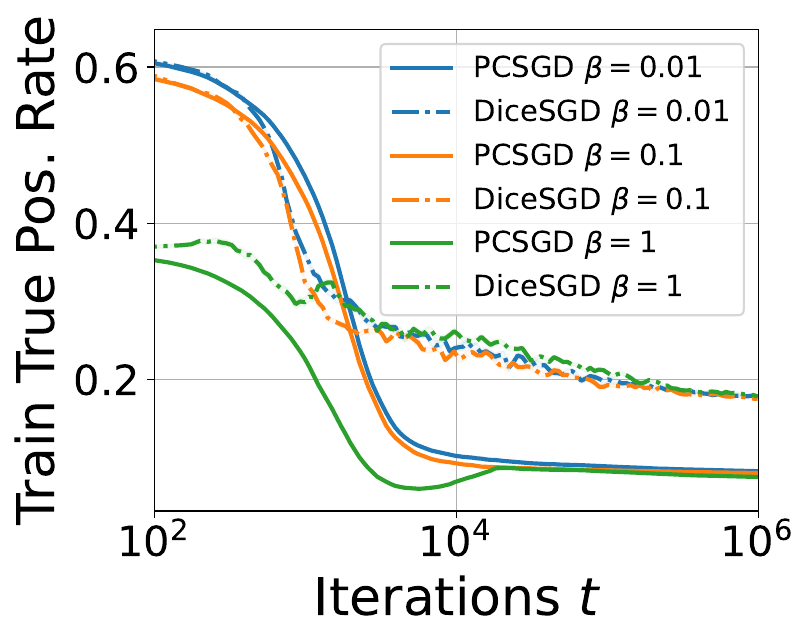}
    \caption{{\bf Logistic Regression} \emph{(Left)}: Performative Risk $V(\prm)$. \emph{(Middle) \& (Right)}: Train true neg./pos. rate. }
    \label{fig:appendix-simu}
\end{figure*}



\end{document}